\numberwithin{equation}{section}
\newtheorem{thm}{Theorem}[section]
\newtheorem{prop}[thm]{Proposition}
\newtheorem{lem}[thm]{Lemma}
\newtheorem{cor}[thm]{Corollary}
\newtheorem{defi}[thm]{Definition}
\theoremstyle{remark}
\newtheorem{rem}[thm]{Remark}
\newcommand{\R}{\mathbb{R}}
\newcommand{\Z}{\mathbb{Z}}
\newcommand{\N}{\mathbb{N}}
\newcommand{\C}{\mathbb{C}}
\newcommand{\al}{\alpha}
\newcommand{\be}{\beta}
\newcommand{\ga}{\gamma}
\newcommand{\Ga}{\Gamma}
\newcommand{\G}{\mathcal{G}}
\newcommand{\De}{\Delta}
\newcommand{\ka}{\kappa}
\newcommand{\fy}{\varphi}
\newcommand{\e}{\varepsilon}
\newcommand{\p}{\partial}
\newcommand{\la}{\lambda}
\newcommand{\de}{\delta}
\newcommand{\ups}{\upsilon}
\renewcommand{\t}{\tau}
\newcommand{\s}{\sigma}
\renewcommand{\th}{\theta}
\newcommand{\x}{\xi}
\newcommand{\y}{\eta}
\renewcommand{\r}{\rho}
\newcommand{\T}{\mathfrak{t}}
\newcommand{\na}{\nabla}
\newcommand{\I}{\infty}
\renewcommand{\S}{\mathcal{S}}
\newcommand{\Om}{\Omega}
\newcommand{\LR}[1]{{\langle {#1} \rangle }}
\newcommand{\lec}{{\, \lesssim \, }}
\newcommand{\gec}{{\,  \gtrsim \, }}
\renewcommand{\hat}{\widehat}
\newcommand{\ti}{\widetilde}
\newcommand{\supp}{\operatorname{supp}}
\newcommand{\EQ}[1]{\begin{equation} \begin{split} #1
 \end{split} \end{equation}}
\newcommand{\pr}{\\ &}
\newcommand{\pt}{&}
\newcommand{\pq}{\quad }
\author{S. Ibrahim}
\address{Arizona State University\\ Department of Mathematics and Statistics\\
P.O. Box 871804\\  Tempe, AZ 85287-1804\\ USA}
\email{slim.ibrahim@asu.edu}
\urladdr{http://math.asu.edu/~ibrahim}
\thanks{S. I. is grateful to the Department
of Mathematics and Statistics at Arizona State University.}
\author{M. Majdoub}
\address{Faculty of Sciences of Tunis, Department of Mathematics, }
\email{Mohamed.Majdoub@fst.rnu.tn}
\thanks{M. M. is grateful to the Laboratory of
PDE and Applications at the Faculty of Sciences of Tunis}
\author{N. Masmoudi}
\address{New York University \\
The Courant Institute for Mathematical Sciences,}
\email{masmoudi@courant.nyu.edu}
\thanks{N. M is partially supported by an NSF Grant DMS-0703145}
\author{K. Nakanishi}
\address{Department of Mathematics, Kyoto University}
\email{n-kenji@math.kyoto-u.ac.jp}
\title[Energy scattering for 2D critical wave equation]
{Energy scattering for \\ the 2D critical wave equation}
\date{\today}
\begin{document}
\begin{abstract}
We investigate existence and asymptotic completeness of the wave operators
for nonlinear Klein-Gordon and Schr\"odinger equations with a defocusing exponential nonlinearity in two space dimensions.
A certain threshold is defined based on the value of the conserved Hamiltonian, below
which the exponential potential energy is dominated by the kinetic energy via a Trudinger-Moser type inequality.
 We prove that if the energy is below or equal to the critical value,
then the solution approaches a free Klein-Gordon solution at the
time infinity. The interesting feature in the critical case is that
the Strichartz estimate together with Sobolev-type inequalities can
not control the nonlinear term uniformly on each time interval, but
with constants depending on how much the solution is concentrated.
Thus we have to trace concentration of the energy along time, in
order to set up favorable nonlinear estimates, and then to implement
Bourgain's induction argument. We show the same result for the
``subcritical" nonlinear Schr\"odinger equation.
\end{abstract}


\subjclass[2000]{35L70, 35Q55, 35B40, 35B33, 37K05, 37L50}
\keywords{Nonlinear wave equation, nonlinear Schr\"odinger equation, scattering theory, Sobolev critical exponent,
Trudinger-Moser inequality}

\maketitle
\tableofcontents


\section{Introduction}
We study the scattering theory in the energy space for nonlinear
Klein-Gordon equation ({\sf NLKG}):
\begin{equation}
\label{NLKG} \left\{
\begin{aligned}
 &\ddot u - \Delta u + u +f(u)= 0, \quad u:\R^{1+2}\to\R,\\
 &u(0,x) = u_0 (x)\in H^1(\R^2),\quad
 \partial_t u(0,x)=u_1(x)\in L^2(\R^2),
\end{aligned}
\right.
\end{equation}
where the nonlinearity $f:\C\to\C$ is defined by
\begin{equation}
 f(u) = \left(e^{4\pi |u|^2}-1-4\pi |u|^2\right)u,
\end{equation}
and for nonlinear Schr\"odinger equation ({\sf NLS}):
\begin{equation}
\label{NLS} \left\{
 \begin{aligned}
  &i\dot u+\Delta u = f(u) ,\quad
   u:\R^{1+2}\to\C, \\
  &u(0,x) = u_0 (x)\in H^1(\R^2).
 \end{aligned}
\right.
\end{equation}
Problem \eqref{NLKG} has the conserved energy
\EQ{ \label{scrit}
 E(u,t) &= \int_{\R^2}\, \Big(|\dot u|^2 + |\na u|^2
   + |u|^2 + 2F(u)\Big) dx,\\
 &:= E_0(u,t)+\int_{\R^2}\,2F(u)\, dx.}
where we denote
\EQ{ \label{F}
 F(u) := \frac{1}{8\pi}\left(e^{4\pi |u|^2}-1 - 4\pi
 |u|^2-8\pi^2|u|^4\right).}
Solutions of \eqref{NLS} satisfy the conservation of mass
and Hamiltonian
\begin{equation}
\label{mass} M(u,t):=\int_{\R^2}|u|^2  dx,
\end{equation}
\begin{equation}
\label{hamil} H(u,t):=\int_{\R^2}\, \Big(|\na u|^2 +2F(u)\Big)\, dx.
\end{equation}

The exponential type nonlinearities appear in several applications, as for example the self trapped beams in plasma. (See \cite{LLT}).
From the mathematical point of view, Cazenave in \cite{Caz} considered the Schr\"odinger equation with decreasing exponential and showed the global well-posedness and scattering.
With increasing exponentials, the situation is much more complicated (since there is no a priori $L^\infty$ control of the nonlinear term).
The two dimensional case is particularly interesting because of its relation to the critical Sobolev (or Trudinger-Moser) embedding.
On the other hand, we have subtracted the cubic part from our nonlinearity $f$ in order to avoid another critical exponent related to the decay property of solutions.
To explain these issues, we start with a brief review of the more familiar power case.


\subsection{The energy critical {\sf NLKG}}
In any space dimension $d\geq 1$, the monomial defocusing nonlinear
Klein-Gordon equation reads \EQ{ \label{NLKGp}
 \ddot u - \Delta u + u + |u|^{p-1}u = 0,\quad
 u:\R^{1+d}\to\R.}
The mass term $u$ is irrelevant for local time behavior, so that we can ignore it in the well-posedness issue, but it has essential impacts on the long time behavior, so we must  distinguish it from the massless wave in the scattering theory.

The global solvability in the energy space of (\ref{NLKGp}) has a
long history. The Sobolev critical power $p$ appears when $d\ge 3$, namely
$p^*:=\frac{d+2}{d-2}=\frac{2d}{d-2}-1$, and there are mainly three cases.

In the subcritical case $(p<p^*)$, Ginibre and Velo finally proved in \cite{GV1}
the global well-posedness in the energy space, extending several preceding works which had limitations in the range of power and/or the solution space.

The critical case $(p=p^*)$ is much more delicate.
The global existence of smooth solutions was first proved by Struwe \cite{Stru} in the radially symmetric case, then by Grillakis \cite{Gr1, Gr2} without the symmetry assumption.
For the energy space, Ginibre, Soffer and Velo \cite{GSV} proved the global well-posedness in the radial case, and then Shatah-Struwe \cite{SS} in the general case.
We note that the uniqueness in the energy space is not yet fully settled (see \cite{P} for the case $d\ge 4$ and \cite{Stru2,MP} for partial results in $d\ge 3$.)

The supercritical case $(p>p^*)$ is even much harder and the question remains essentially open,
except for the existence of global weak solutions \cite{Str}, and
some negative results about non-smoothness of the solution map \cite{Leb1}, and loss of regularity \cite{Leb2}.

Concerning the scattering theory of (\ref{NLKGp}) another critical
value of $p$ appears, namely $p_*:=1+\frac{4}{d}$. It is linked to the space-time property of the linear solutions. More precisely, the optimal space-time integrability of free Klein-Gordon solutions in the energy space is given by the Strichartz estimate
\EQ{
 \|u\|_{L^q(\R^{1+d})} \le C E_0(u)^{1/2}, \quad 2 + \frac{4}{d} \le q \le 2+\frac{6}{d-2},}
and $p_*$ is determined by the relation $p_*q/(q-1)=q$ for the
smallest $q=2+4/d$. The scattering between these two powers
$p_*<p<p^*$ was solved in \cite{Brenner,GV2} for $d\ge 3$, and it
was later extended to $d\le 2$ in \cite{2Dsubcrit}, by generalizing
the Morawetz estimate to lower dimensions. The scattering for the
Sobolev critical case $p=p^*$ was solved in \cite{3Dcrit}, but the
lower critical case $p=p_*$ still remains open, which is the reason
that we removed the cubic term from our nonlinearity $f$. We remark
that the scattering for the massless wave equation is available in
the energy space only if the nonlinearity is dominated by the
Sobolev critical power $p^*$ (see \cite{GV3} for the
sub-super-critical case and \cite{BS,BG} for the critical case).

For $d\le 2$, we have no upper bound for the power nonlinearity \eqref{NLKGp}.
But if we consider more general nonlinearity, we still need some growth condition in the two dimensional case, because of the failure of the Sobolev embedding $H^1(\R^2)\subset L^\I(\R^2)$.
Then the exponential nonlinearity \eqref{NLKG} naturally arises in the connection to the Trudinger-Moser inequality.
One should note, however, the size of solution becomes more crucial for the exponential nonlinearity than the power case.
Indeed, solutions of smaller size can be regarded heuristically as giving smaller power in the exponential (it is true if the size is measured in $L^\I$).

Thus the theory for the exponential nonlinearity \eqref{NLKG} was worked out first by Nakamura-Ozawa in \cite{NO1,NO2} for sufficiently small data in the energy space, based on the Trudinger-Moser and the Strichartz estimates, establishing the global existence as well as the scattering of such small solutions.
Later on, the size of the initial data for which one has global existence was quantified, first in \cite{A} for radially symmetric initial data $(0,u_1)$, and then in \cite{2Dglobal,2Dinst} for general data in the energy space.

Furthermore, \cite{2Dglobal} and \cite{2Dinst} established the following trichotomy in the dynamic. This trichotomy is similar to the power case for $d\ge 3$, but the difference is that the threshold is given by the energy size, instead of the power.
More precisely,
\begin{defi}
The Cauchy problem \eqref{NLKG} is said to be {\it subcritical} if
$E(u,0)<1$, {\it critical} if $E(u,0)=1$ and {\it supercitical} if
$E(u,0)>1$.
\end{defi}

Indeed, one can construct a unique local solution if $\|\nabla u_0\|_{L^2}<1$, and the time of existence depends only on $\eta := 1 -\|\nabla u_0\|_{L^2}^2 $.
Actually, it can be constructed even without any size restriction \cite{IMM5}, but then the existence time depends fully on the initial data, not only in terms of the norm.

In the subcritical case, the conservation of energy gives a priori lower bound on $\eta$,
hence the maximal local solutions are global in the subcritical case.
In the critical case, the situation is much more delicate, and arguments based on
the non-concentration of the energy were investigated to extend the
local solutions.

\begin{thm}[Global well-posedness \cite{2Dglobal}]
\label{GWP} Assume that $E(u,0)\leq 1$. Then \eqref{NLKG} has a
unique\footnote{Note that the uniqueness is in the energy space.
This is different from the Sobolev critical case in
 three dimensions, where only partial results are
 available \cite{Stru2,MP}.} global solution $u\in {\mathcal C}({\mathbb R},
H^1)\cap\; {\mathcal C}^1({\mathbb R}, L^2)$. Moreover, $u\in L^4_{loc}({\mathbb R}, {\mathcal C}^{1/4})$.
\end{thm}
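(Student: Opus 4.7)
The plan is to reduce global existence to a uniform lower bound on $\eta(t) := 1 - \|\nabla u(t)\|_{L^2}^2$ along the flow, and then to read off the regularity from Strichartz. First I would invoke a local well-posedness result in the energy class: for any data with $\|\nabla u_0\|_{L^2}^2 < 1$, a contraction argument in a Strichartz space, using Moser--Trudinger to control the exponential nonlinearity, yields a unique solution on a time interval whose length depends only on $\eta(0)$. Next, since the density $F$ defined in \eqref{F} is pointwise nonnegative (every omitted Taylor term has a definite sign by Taylor's formula with remainder), conservation of the energy \eqref{scrit} gives
\EQ{ \|\nabla u(t)\|_{L^2}^2 \le E_0(u,t) \le E(u, t) = E(u, 0) \le 1. }
In the subcritical regime $E(u,0) < 1$ this forces $\eta(t) \ge 1 - E(u,0) > 0$ uniformly in $t$, so the local existence time is bounded below along the flow and iteration yields a global solution.

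The critical case $E(u,0) = 1$ is the principal obstacle, since now $\eta(t)$ may approach $0$ and the iteration breaks down. I would argue by contradiction: suppose the maximal forward existence time $T^* < \infty$. By finite speed of propagation for \eqref{NLKG}, blow-up must concentrate at some point $x^* \in \R^2$ in the sense that the local energy inside the backward light cone with vertex $(T^*, x^*)$ stays bounded away from zero as $t \to T^*$. The key input is a non-concentration statement: applying the energy identity on truncated cones, the energy flux across the lateral boundary tends to zero, so the local gradient energy splits into an inner piece concentrating at scale $R_t \to 0$ and an outer piece. Cutting off the solution at a scale slightly larger than $R_t$ strips away a positive fraction of $\|\nabla u\|_{L^2}^2$; by sharpness of the Moser--Trudinger threshold this places the truncation in the strictly subcritical regime with a uniform margin $\tilde\eta > 0$. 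A local reconstruction past $T^*$ then contradicts maximality. This is the non-concentration mechanism developed in \cite{2Dglobal} and refined in \cite{2Dinst}.

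Uniqueness in the energy class follows from a Gronwall-type argument on the difference of two solutions: both remain in the subcritical Moser--Trudinger regime by the a priori bound above, so their exponential nonlinearities enjoy a common bound in a suitable Lebesgue space, and the difference closes in energy norm on small intervals. Finally, the regularity $u \in L^4_{loc}(\R, C^{1/4})$ is obtained by applying the inhomogeneous Strichartz estimate for the 2D Klein--Gordon equation to get $u \in L^4_{loc}(\R, W^{1,p})$ for suitable $p > 2$, followed by the Sobolev embedding $W^{1,p}(\R^2) \hookrightarrow C^{1-2/p}(\R^2)$; the choice $p = 8/3$ gives the claimed H\"older exponent $1/4$. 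The main obstacle throughout is the non-concentration step of the critical case, where the absence of any scaling invariance for the exponential nonlinearity rules out the standard Shatah--Struwe blow-up analysis and forces the more delicate sharpness-of-Moser--Trudinger argument sketched above.
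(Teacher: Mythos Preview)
This theorem is not proved in the present paper; it is quoted from \cite{2Dglobal} as background, and the text surrounding it only summarizes the mechanism (local existence with lifespan depending on $\eta = 1 - \|\nabla u_0\|_{L^2}^2$, iteration via energy conservation in the subcritical case, and a non-concentration argument in the critical case). Your outline matches that summary and is a reasonable sketch of the argument of \cite{2Dglobal}, so there is no paper-internal proof to compare against.

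One minor correction on the final regularity step: the pair $(p,q)=(4,8/3)$ is not an admissible Strichartz pair for the two-dimensional Klein--Gordon equation at the $H^1\times L^2$ level, so the route via $L^4_t W^{1,8/3}_x \hookrightarrow L^4_t \mathcal C^{1/4}_x$ does not go through as written. In this paper (see \eqref{Stz}) and in \cite{2Dglobal}, the conclusion $u\in L^4_{loc}(\R;\mathcal C^{1/4})$ is obtained directly from the Besov-valued Strichartz estimate $u\in L^4_t B^{1/4}_{\infty,2}$ together with the embedding $B^{1/4}_{\infty,2}\hookrightarrow B^{1/4}_{\infty,\infty}=\mathcal C^{1/4}$; no intermediate Sobolev space is needed.
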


The local well-posedness proof of Theorem \ref{GWP} is based on a fixed point argument. Thus, the solution map (data gives solution) is uniformly continuous.
However, in the super-critical case, it is not \cite{2Dinst}.

We should emphasize that the smallness condition in the works by Nakamura-Ozawa is essentially below the threshold (at most $1/2$), even for the global existence part.
This is because the Trudinger-Moser controls only the $L^1$ norm of the critical nonlinearity, which is insufficient for the wave equation in the energy space.
Hence the smallness was used for them effectively to improve the exponent to $L^2$.
This gap was filled out in \cite{2Dglobal} by using a logarithmic inequality.

Moreover, for the scattering problem, the smallness condition was used as the only source to ensure contractiveness for the fixed point argument globally in time.
It is very unlikely for such an argument to hold in the full range of the subcritical regime, no matter how we improve the estimates.
Hence we need more elaborate arguments to use global dispersion of the nonlinear solutions as another source of smallness, just as in the scattering results in the power case without size restriction.


\subsection{The energy critical {\sf NLS}}
There are almost parallel stories for the nonlinear Schr\"odinger equations.
Recall the monomial defocusing
 semilinear Schr\"odinger equation in space dimension
 $d\geq 1$
\begin{equation}
\label{NLSp} i\, \dot u + \Delta u =  |u|^{p-1} u,\quad u: \R^{1+d}
\longmapsto \C,
\end{equation}
which has the same critical exponents $p^*=\frac{d+2}{d-2}$ (only for $d\ge 3$) and $p_*=1+\frac{4}{d}$.

For the {\it{energy subcritical}} case ($p < p^*$), an iteration
of the local-in-time well-posedness result using the {\it{a priori}}
upper bound on $\| u(t) \|_{H^1}$ implied by the conservation laws
establishes global well-posedness for \eqref{NLSp} in $H^1$. Those
solutions scatter when $p>p_*$ \cite{GV4,2Dsubcrit}.

The {\it{energy critical}} case ($p=p^*$) was actually harder than the Klein-Gordon (wave), for which the finite propagation property was crucial to exclude possible concentration of energy, whereas there is no upper bound on the propagation speed for the Schr\"odinger.
Nevertheless, based on new ideas such as induction on the energy size and frequency split propagation estimates, Bourgain \cite{Bourgain1} proved
the global well-posedness and the scattering for radially symmetric data,
and it was extended to the general case by \cite{CKSTT} using a new interaction Morawetz  inequality.

For the exponential nonlinearity in two spatial dimensions, small data global well-posedness together with the scattering was worked out by Nakamura-Ozawa in \cite{NO3}.
Later on, the size
 of the initial data for which one has local existence was quantified for \eqref{NLS} in \cite{CIMM}, and a notion of criticality was proposed:

\begin{defi}
 The Cauchy problem \eqref{NLS} is said to be {\it subcritical} if
$H(u_0)<1$,
 {\it critical} if $H(u_0)=1$ and {\it supercritical} if
$H(u_0)>1$.
\end{defi}

 Indeed, one can construct a unique local solution if $\|\nabla
 u_0\|_{L^2}<1$, and the time of existence depends only on $\eta:=1-\|\na u_0\|_{L^2}$
and $\| u_0\|_{ L^2} $. Hence the maximal local solutions
are indeed global in the subcritical case. The critical case is more
delicate due to the possible concentration of the Hamiltonian. The
following result is proved in \cite{CIMM}.
\begin{thm}[Global well-posedness \cite{CIMM}]
\label{GWP-NLS}
 Assume that $H(u_0)\le 1$, then the problem \eqref{NLS}
has a unique global solution $u$ in the class
$$
{\mathcal C}(\R, H^1(\R^2)).
$$
Moreover, $u\in L^4_{loc}(\R,\;{\mathcal C}^{1/2}(\R^2))$ and
satisfies the conservation laws \eqref{mass} and \eqref{hamil}.
\end{thm}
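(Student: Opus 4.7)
My plan is to establish a quantitative local well-posedness statement and then extend the solution globally by means of the conservation of mass and Hamiltonian, with the main obstruction appearing in the critical case, where the local existence step can a priori shrink to zero.

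For the local theory, I would run a contraction mapping on the Duhamel formula
\begin{equation}
u(t)=e^{it\De}u_0 - i\int_0^t e^{i(t-s)\De}f(u(s))\,ds
\end{equation}
in a Strichartz-type space $X_T$ whose norm dominates $L^4([0,T], {\mathcal C}^{1/2}(\R^2))$, so that the regularity assertion of the theorem is encoded in the ambient function space. The linear input is the inhomogeneous Strichartz estimate for $e^{it\De}$. The crux is to control the supercritical-looking nonlinearity $f(u)=(e^{4\pi|u|^2}-1-4\pi|u|^2)u$. Here I would combine the Trudinger-Moser inequality
\begin{equation}
\int_{\R^2}\bigl(e^{4\pi|v|^2}-1\bigr)\,dx \le C\|v\|_{L^2}^2\quad\text{whenever}\ \|\na v\|_{L^2}\le 1,
\end{equation}
with the logarithmic refinement of the 2D Sobolev embedding from the authors' earlier work on the Klein-Gordon setting, to obtain a nonlinear Lipschitz bound of the form
\begin{equation}
\|f(u)-f(v)\|_{L^1_tL^2_x([0,T])}\le C(\|u_0\|_{L^2},\eta)\,T^{\al}\,\|u-v\|_{X_T},\quad \al>0,
\end{equation}
valid as long as $\|\na u\|_{L^\I_t L^2_x},\,\|\na v\|_{L^\I_t L^2_x}<1$, where $\eta=1-\|\na u_0\|_{L^2}$. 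Choosing $T$ small in terms of $\|u_0\|_{L^2}$ and $\eta$ closes the contraction and delivers a unique local solution in the required class.

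The second step is to iterate using the conservation laws \eqref{mass} and \eqref{hamil}. The function $F$ is pointwise nonnegative (its Taylor expansion at the origin starts at order six with positive coefficients), so $H(u(t))=H(u_0)\le 1$ forces $\|\na u(t)\|_{L^2}^2\le 1$ throughout the interval of existence. In the strictly subcritical regime $H(u_0)<1$ one has the uniform gap $\eta(t)\ge 1-\sqrt{H(u_0)}>0$; combined with the conservation of mass, the local step covers an interval of length bounded below by a fixed positive constant, and iteration extends the solution to all of $\R$.

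The critical case $H(u_0)=1$ is the main obstacle. The Hamiltonian identity becomes $\|\na u(t)\|_{L^2}^2 = 1-2\int F(u(t))\,dx$, and although $\eta(t)>0$ at each time (since $\eta(t)=0$ would force $u(t)\equiv 0$ and violate mass conservation when $u_0\not\equiv 0$), one cannot rule out a priori that $\eta(t_n)\to 0$ along some $t_n\nearrow T^\star$, in which case the iteration collapses. To preclude this scenario I would upgrade the local theory to a refined version: when $\int F(u)\,dx$ is small, each term $\int |u|^{2k}\,dx$ of its Taylor expansion is small, and the exponential nonlinearity can be absorbed into a lower-order contribution whose associated existence time depends only on $\|u_0\|_{L^2}$ rather than on the vanishing $\eta$. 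A careful bootstrap between this improved local theory, the mass conservation, and the identity for $\eta(t)$ is the delicate step; once in place, it precludes finite-time breakdown and closes the global argument.
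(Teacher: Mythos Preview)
The paper does not give its own proof of this theorem; it is quoted from \cite{CIMM} as background for the scattering results. So there is no in-paper argument to compare against. That said, your outline for the subcritical range $H(u_0)<1$ matches the strategy described in the introduction: a contraction in a Strichartz space, with the nonlinear estimate driven by Trudinger--Moser together with the logarithmic inequality, and iteration via the conserved mass and Hamiltonian (which keep the gap $\eta=1-\|\na u\|_{L^2}$ uniformly positive).

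The critical case $H(u_0)=1$ is where your sketch has a genuine gap. You argue that if $\eta(t_n)\to 0$ then $\int F(u(t_n))\to 0$, hence all moments $\int|u(t_n)|^{2k}$ are small, and you hope this forces the exponential to behave as a ``lower-order'' term with lifespan depending only on the mass. That step does not go through. Smallness of the global moments $\int|u|^{2k}$ is perfectly compatible with $u$ being pointwise large on a shrinking set: the model profile $u(x)\sim(2\pi)^{-1/2}\sqrt{\log(1/|x|)}$ truncated at scale $N^{-1}$ has $\|\na u\|_{L^2}\to 1$, all $L^p$ norms tending to $0$, yet $e^{4\pi|u|^2}$ is not uniformly controllable in the dual Strichartz norms (this is exactly the mechanism behind Proposition~\ref{prop:ctex NLKG} and its NLS analogue in Section~8). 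In other words, the obstruction in the critical case is \emph{concentration} of $|\na u|^2$, not the size of the potential energy, and your refinement does not address concentration.

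What the cited proof in \cite{CIMM} (and its Klein--Gordon counterpart \cite{2Dglobal}) actually does, as the introduction indicates, is rule out concentration of the Hamiltonian: one shows that the local quantity $\sup_{c}\int_{|x-c|<r}|\na u(t)|^2\,dx$ stays strictly below $1$ on the maximal interval, so the nonlinear estimate (whose constant depends on this \emph{local} $H^1$ bound, cf.\ Lemma~\ref{crit-nlst}) remains uniform and the local lifespan does not collapse. Any correct treatment of the critical case has to go through some version of this non-concentration argument; your bootstrap between $\int F(u)$ and the existence time cannot substitute for it.
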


Our first goal in this paper is to show that every global solution
in the sub- and critical cases to \eqref{NLKG} approaches solutions
to the free Klein-Gordon equation \EQ{\label{LKG}
 \ddot v - \Delta v + v =0,}
in the energy space
$(u(t),\dot u(t))\in H^1(\R^2)\times L^2(\R^2)$ as $t\to\pm\I$.
 This can be done by showing global uniform Strichartz estimtes.
The main result reads.

\begin{thm} \label{nlkg}
\label{Main} For any solution $u$ of \eqref{NLKG} satisfying
$E(u,0)\leq 1$, we have $u\in L^4(\R, {\mathcal C}^{1/4})$ and there
exist unique free Klein-Gordon solutions $u_{\pm}$ such that
$$
E_0(u-u_{\pm},t)\to 0\qquad (t\to\pm \infty).
$$
Moreover, the maps
$$
(u(0),\dot{u}(0))\longmapsto(u_{\pm}(0),\dot{u_{\pm}}(0))
$$
are homeomorphisms between the unit balls in the nonlinear energy
space and the free energy space, namely from $\{(\varphi,\psi)\in
H^1\times L^2 \ ;\ \|\varphi\|_{H^1}^2+\|\psi\|_{L^2}^2+2\|F(\varphi)\|_{L^1}\leq 1\}$
onto $\{(\varphi,\psi)\in H^1\times L^2\ ;\ \|\varphi\|_{H^1}^2+\|\psi\|_{L^2}^2\leq 1\}$.
\end{thm}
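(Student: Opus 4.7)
The plan is to reduce the entire theorem to the single global space-time bound
\[
\|u\|_{L^4(\R;\mathcal{C}^{1/4})}\le C(E(u,0))
\]
valid for every solution of \eqref{NLKG} with $E(u,0)\le 1$. Granted this estimate, asymptotic completeness follows by a standard Cook argument: Duhamel's formula combined with the Strichartz estimate for the free Klein-Gordon equation shows that $e^{-it\omega}(u(t),\dot u(t))$, with $\omega:=\sqrt{1-\De}$, is Cauchy in $H^1\times L^2$ as $t\to\pm\I$, producing unique limits $(u_\pm(0),\dot u_\pm(0))$. Continuity of the wave operators is then a perturbation statement based on Theorem \ref{GWP} and the smallness of the Strichartz tail. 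The inverse maps are constructed by first solving the final-state problem via a contraction around the free evolution on $[T,\I)$ for $T$ so large that the free Strichartz norm there is small, and then extending backward to $t=0$ by Theorem \ref{GWP}; surjectivity onto the free unit ball follows by a connectedness/degree argument using injectivity, continuity, and preservation of the free energy in the limit.

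To produce the key space-time bound, I would pair the Strichartz inequality with the logarithmic Trudinger-Moser inequality of \cite{2Dglobal}. Because the cubic part has been subtracted from $f$, its leading behavior is $|u|^5 e^{4\pi|u|^2}$, and $f(u)$ can be placed in $L^1_tL^2_x$ rather than at a critical Strichartz endpoint. On a time interval $I$ on which the kinetic energy stays $\de$-away from the Trudinger-Moser threshold on every space ball of a fixed size, the log-TM inequality yields a nonlinear estimate of the schematic form
\[
\|f(u)\|_{L^1_tL^2_x(I)}\le C(\de)\,\|u\|_{L^4(I;\mathcal{C}^{1/4})}^{\mu}
\]
with $\mu>1$, which closes a Strichartz bootstrap on $I$. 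In the strictly subcritical case $E(u,0)<1$, conservation of energy yields a uniform $\de$ and the global bound follows by splitting $\R$ into finitely many such intervals.

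The main obstacle, flagged in the abstract, is the critical case $E(u,0)=1$, where $C(\de)$ degenerates as $\de\downarrow 0$ and concentration at a space-time point is not ruled out a priori. Here I would run Bourgain's induction-on-energy. Set
\[
M(E):=\sup\left\{\|u\|_{L^4(\R;\mathcal{C}^{1/4})}\,:\,E(u,0)\le E\right\},
\]
and suppose for contradiction that $M$ is finite for $E<E_c$ but diverges at some critical $E_c\in(0,1]$. From a minimizing sequence one extracts, via a profile decomposition adapted to the exponential nonlinearity, a solution whose kinetic energy exceeds $1-\de$ on arbitrarily small space balls along a diverging time sequence. Combining this concentration with Theorem \ref{GWP} and a frequency-localized propagation estimate, one shows that the potential energy $2\int F(u)\,dx$ would then have to exceed $E_c$ on a set of vanishing measure, contradicting the Trudinger-Moser inequality at level $E_c\le 1$. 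Quantifying precisely how the nonlinear constants depend on the spatial scale of concentration along time, so that the induction step can be closed and $M(E_c)<\I$ forced, is the technical heart of the proof.
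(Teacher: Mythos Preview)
Your reduction to a global $L^4_t\mathcal C^{1/4}_x$ bound and the Cook/Duhamel argument for asymptotic completeness are fine, and the use of the logarithmic Trudinger--Moser inequality to obtain a nonlinear estimate of the form $\|f(u)\|_{L^1_tL^2_x(I)}\le C(\de)\|u\|_K^\mu$ on intervals with subcritical local $H^1$ energy is exactly what the paper does in Section~3.

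The first genuine gap is in your treatment of the subcritical case. Having a uniform $\de>0$ and a bootstrap inequality on each interval where the Strichartz norm is small does \emph{not} by itself imply that $\R$ can be split into finitely many such intervals: the local bootstrap only gives you control as long as the norm stays small, but you have no a priori reason that the total Strichartz norm is finite, so the splitting is circular. The paper fills this gap with substantial machinery even when $E(u)<1$: the generalized Morawetz estimate~\eqref{ME}, a ``bump'' lemma (Lemma~\ref{bump}) extracting space-time localized mass from each interval of fixed $X$-norm, a combinatorial lemma (Lemma~\ref{long}) bounding the number of such intervals, an energy-separation lemma (Lemma~\ref{STLE}) peeling off a free wave carrying a definite amount of energy, and a perturbation lemma (Lemma~\ref{PT}). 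These are assembled into a Bourgain induction on the energy level (Section~5). None of this is ``splitting $\R$ into finitely many intervals''.

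The second gap is in the critical case. The paper does \emph{not} use a profile decomposition or the contradiction you sketch via the potential energy. Instead it tracks the concentration radius $r_\e(t)$ of the solution and argues case by case on its asymptotic behavior (Section~6): dispersive, non-dispersive (ruled out via the inversional identity and Morawetz), and the oscillatory ``waving concentration'' case. The key technical novelty for the last case, which your outline misses entirely, is a nonlinear estimate (Lemma~\ref{critical estimate}) requiring subcritical local $H^1$ for only \emph{one} of the two functions, provided their difference is small in the critical Besov space $B^0_{\I,2}$, together with a sharp $L^\I$--$L^2$ decay estimate for spatially localized free Klein--Gordon solutions (Lemma~\ref{LIL2 decay}) that supplies this Besov smallness. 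Your proposed mechanism (``potential energy would exceed $E_c$ on a set of vanishing measure'') does not correspond to anything in the actual argument and is not a workable contradiction.
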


The peculiarity of this equation is that the Strichartz
norms give time-local control of the nonlinearity in the
case \eqref{scrit}, but which is not uniform as $\|\na u\|_{L^2}\to 1$,
and so in the critical case $E(u)=1$, it is not {\it a priori}
uniform in time globally, even for a fixed solution.
In this respect, our critical case $E(u)=1$ appears harder than the Sobolev
critical case in higher dimensions \cite{3Dcrit},
where the Strichartz norms give uniform control of the
nonlinearity on time intervals where they are small.

To have the local uniform estimates of $f(u)$, we choose a norm
which takes into account the two behaviors at zero and at infinity
of the nonlinearity. Those estimates can be proved if the
concentration radius of the $H^1$ norm is not too small. (See
section 3 for the precise definition). Having these estimates in
hand, the proof proceeds in the subcritical case $E(u)<1$ almost
verbatim as in \cite{3Dcrit}. However, in the critical case we need
to keep track of the energy distribution and its propagation much
more carefully, because of the non-uniform nature of those estimates.

The second goal in this paper is to show that every global solution
of \eqref{NLS} with $H(u)\leq 1$ approaches solutions to the
associated free equation\EQ{\label{SL}
 i\dot v + \Delta v=0,}
in the energy space $H^1$ as $t\to\pm\infty$. Unfortunately, we have
not succeeded to handle the critical case $H(u)=1$ and we have to restrict
ourselves to the subcritical one.
The reason is that to trace the concentration radius, the finite propagation of energy is essential in our argument for NLKG, which is not available for NLS.
The main ingredient for the subcritical NLS is a new interaction Morawetz
estimate, proved independently by Colliander et al. and Planchon-Vega
\cite{CGT, PV}. This estimate give a priori global bound of $u$ in
$L^4_t (L^8_x)$. Hence, by complex interpolation we deduce that
some of the Strichartz norms used in the nonlinear estimate
go to zero for large time and the scattering in the subcritical
case follows. More precisely, we have
\begin{thm}
 \label{Main-NLS}
For any global solution $u$ of \eqref{NLS} in $H^1$ satisfying
$H(u)<1$, we have $u\in L^4(\R, {\mathcal C}^{1/2})$ and
there exist unique free solutions $u_{\pm}$ of \eqref{SL} such that
$$
\|(u-u_{\pm})(t)\|_{H^1}\to 0\qquad (t\to\pm\infty).
$$
Moreover, the maps
$$
u(0)\longmapsto u_{\pm}(0)
$$
are homeomorphisms between the unit balls in the nonlinear energy
space and the free energy space, namely from $\{\varphi\in H^1\ ; \ H(\varphi)< 1\}$ onto $\{\varphi\in H^1\ ;\  \|\nabla\varphi\|_{L^2}<1\}$.

 \end{thm}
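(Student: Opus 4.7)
The plan is to mirror the argument for \eqref{NLKG} developed earlier in the paper, but with the interaction Morawetz estimate of Colliander--Grillakis--Tzirakis and Planchon--Vega \cite{CGT,PV} replacing the finite-propagation-speed arguments that are unavailable for \eqref{NLS}. The strictly subcritical assumption $H(u) < 1$ is used crucially: since $F \ge 0$ (as the Taylor remainder of $e^{4\pi|u|^2}$ beyond the quadratic term), conservation of the Hamiltonian yields $\|\na u(t)\|_{L^2}^2 \le H(u) \le 1 - \de$ for some $\de > 0$ uniformly in $t$, which keeps the solution strictly below the Trudinger--Moser threshold at every time.

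First I would prove a local-in-time nonlinear estimate of the form
\[ \|f(u)\|_{N(I)} \;\le\; C(\de)\, \|u\|_{L^4_t L^8_x(I)}^{\al}\, \|u\|_{S(I)}^{\be}\, \|u\|_{L^\I_t H^1_x}^{\ga}, \]
with $N$ a Strichartz-dual norm and $S$ an auxiliary Strichartz space, valid on any time interval $I$. Because the cubic part has been subtracted, $|f(u)| \lec |u|^5 + |u|^5 (e^{4\pi|u|^2}-1)$, so a H\"older splitting separates a polynomial prefactor (estimated via $L^4_t L^8_x$ together with admissible Strichartz norms, reached by complex interpolation) from the exponential factor, which is controlled in $L^p_x$ uniformly in $t$ by the subcritical Trudinger--Moser inequality applied to $\na u(t)$. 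The resulting constant depends only on $\de$, not on the particular interval.

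Next, the interaction Morawetz estimate gives $\|u\|_{L^4_t L^8_x(\R)} < \I$ as an \emph{a priori} bound. Since this norm is globally finite, $\R$ can be partitioned into finitely many intervals $I_j$ on each of which $\|u\|_{L^4_t L^8_x(I_j)}$ is below any prescribed threshold. On each $I_j$, the nonlinear estimate combined with Duhamel and Strichartz yields a bootstrap bound for $\|u\|_{S(I_j)}$ with absolute constants; concatenating across the finite partition yields $u \in L^4(\R,\,{\mathcal C}^{1/2})$ together with global Strichartz bounds on $u$. From these global bounds, the Duhamel tails $\int_t^{\pm\I} e^{-i(s-t)\De} f(u(s))\,ds$ are Cauchy in $H^1$ as $t \to \pm\I$, producing the unique scattering states $u_\pm$. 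The wave operator in the reverse direction is then built by a contraction argument in the same Strichartz space on $(T,\I)$ for $T$ large, using that $\|e^{it\De}\fy\|_{L^4_t L^8_x(T,\I)} \to 0$, and extended back to $t=0$ by Theorem \ref{GWP-NLS}. Continuity of both maps follows from a stability/perturbation lemma built on the same nonlinear estimate.

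The main obstacle I expect is the fine-tuning of the nonlinear Strichartz estimate itself: one must simultaneously arrange that the Trudinger--Moser input lies in the strictly subcritical range (so the constant $C(\de)$ is finite), that at least one of the polynomial factors is captured by the non-Strichartz-admissible norm $L^4_t L^8_x$ via complex interpolation with admissible pairs, and that the dual norm on the left actually closes the Strichartz iteration. Balancing these three constraints inside the narrow range of 2D Schr\"odinger admissibility, while keeping the exponential factor integrable with room to spare, is where the real work of the proof sits; once it is in place, the rest of the argument proceeds along well-established lines.
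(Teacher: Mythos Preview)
Your global architecture matches the paper's exactly: establish a nonlinear Strichartz inequality carrying a positive power of $\|u\|_{L^4_tL^8_x}$, invoke the interaction Morawetz bound of \cite{CGT,PV} to make that norm globally finite, partition $\R$ into finitely many intervals on which it is small, bootstrap the full Strichartz norm on each, and read off scattering and the wave operators.

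Where you diverge from the paper is in the nonlinear estimate itself. You plan to place $e^{4\pi|u|^2}-1$ in $L^p_x$ uniformly in $t$ via the subcritical gap $\|\na u(t)\|_{L^2}^2\le H<1$ in Trudinger--Moser; this gives only $p$ barely above $1$ when $H$ is close to $1$, and closing the H\"older splitting then forces the factor $|\na u|\,|u|^2$ into a near-endpoint admissible space (time exponent just above $2$, spatial exponent large). That can be pushed through for each fixed $H<1$, but the exponents and constants degenerate as $H\to 1$ and the balancing you flag as ``the real work'' is genuinely delicate. The paper takes a cleaner route: it splits the exponential between $L^1_x$ (bounded by Trudinger--Moser) and $L^\infty_x$, and controls the $L^\infty$ piece by the logarithmic inequality \eqref{H-mu}, obtaining $e^{4\pi(1/2+\de)\|u\|_{L^\infty}^2}\lec\|u\|_{C^{1/2-\de/2}}^2$ from $\|u\|_{H_\mu}^2\le(H+1)/2<1$. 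This converts the unbounded exponential into two extra H\"older-norm factors that sit in $L^{4/(1-\de)}_t$, keeping all exponents well inside the $L^4_t$-based Strichartz scale and yielding the closed estimate $\|u\|_{S^1(I)}\lec\|u(T)\|_{H^1}+\|u\|_{L^4_tL^8_x(I)}^{4\de}\|u\|_{S^1(I)}^{5-4\de}$ without approaching the forbidden $L^2_tL^\infty_x$ endpoint. In short, the logarithmic inequality is precisely the device that makes the balancing tractable; your Trudinger--Moser-only plan is not wrong, but it is the harder road.
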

Thus the proof in the subcritical case is much simpler for NLS than NLKG, given the a priori estimate due to \cite{CGT,PV}.
Unfortunately, this type of Morawetz estimates is so far specialized to the Schr\"odinger, and does not seem easily to apply to the Klein-Gordon equation in any space dimension.

This paper is organized as follows. Section
two is devoted to fix the necessary notation
we use.
For the convenience of the reader, we
recall without proofs some useful and quite ``standard''
lemmas.
In section three, we establish the uniform local
estimates.
In section four, we basically follow the arguments
 in \cite{3Dcrit}, proving a uniform global bound on the
  Strichartz norms of solutions under some control on the energy concentration.
This implies in particular the
  scattering Theorem \ref{Main} in the subcritical case.
  Section six is devoted to the critical case.
  First we treat the two extreme (and somehow easier)
  cases when the solution either completely disperse
  or does not disperse at all at the time infinity.
The wider
  case is then when the solution repeats dispersion
  and reconcentration. This necessitates a careful
  study case by case. Section seven is devoted to
  show the optimality of the condition on the local
  $H^1$ norm. The last section treats the case of
  NLS equation. The proof of the scattering in the
   sub-critical case is mainly based on a recent
   {\it a priori} bound on the solutions established
    independently by Colliander et al. \cite{CGT}
    and Planchon-Vega \cite{PV}.


\section{Notation and useful Lemmas}
In this section, we introduce some notation and recall several lemmas we use to prove the main result.

First, let $B^\sigma_{p,q}$ be the inhomogeneous Besov space. In particular, recall that the H\"older space ${\mathcal C}^\sigma=B^\sigma_{\infty,\infty}$, for any non-integer $\sigma>0$. Also, we introduce the following function spaces for some specific Strichartz norms.
\newcommand{\HL}{H_{loc}}
Define the following local $H^1$ norm:
\EQ{
 \|\fy\|_{H^1[R]} := \sup_{c\in\R^2} \int_{|x-c|\le R} |\na\fy(x)|^2 + |\fy(x)|^2 dx,}
and set
\EQ{
 &H:=L^\I_t(H^1), \quad \HL:=L^\I_t(H^1[6]), \quad B:=L^\I_t(B^{-1/4}_{\I,\I}),\\
 &X:= L^{8}(L^{16}), \quad K:=L^4_t(B^{1/4}_{\I,2}\cap B^{1/2}_{4,2}),\\
 &Y_1:=L^{1/\de}(I;L^{2/\de}), \quad Y_2:=L^{4/(1-\de)}(I;C^{1/4-\de}\cap L^8),}
and $Y:=Y_1\cap Y_2$.

Second, define the linear and nonlinear energy densities as
\begin{eqnarray}
\label{LE}
e_L(u,t):=|\dot u|^2+|\nabla u|^2+|u|^2\quad\hbox{    and,   }\quad e_N(u,t):=e_L(u,t)+2F(u),
\end{eqnarray}
respectively. Recall that $F$ is given by \eqref{F}. In addition,
define ${\mathcal G}(u)=uf(u)-2F(u)$, $r=|x|$,
$\omega=\frac{x}{|x|}$, $u_r=\omega\cdot\nabla u$, and \EQ{
\label{Q}
 t^2Q(u):=&(t\dot u+ru_r+u)^2+(r\dot u+tu_r)^2\\
 &+(t^2+r^2)(|\na u|^2-|u_r|^2+u^2).}
Then we have the inversional identity (see \cite{Strauss,Gr2} or more specifically \cite[(7.4)]{3Dcrit}):
\begin{lem}
\label{II}
For any constant $c>0$ and for any solution $u$ of \eqref{NLKG} with $E(u)\le 1$, we have
\EQ{
\left[\int_{r<ct}t^2Q(u)+2(t^2+r^2)F(u)dx\right]_S^T
 =&\int_{cS<r<cT}P_c(u)dx\\
  &+\int_{S}^T\int_{r<ct}4t(u^2-H(u))dxdt,}
where $H(u)=\frac{{\mathcal G}(u)}{2}-2F(u)$, and $P_c(u)$  satisfies
$$
|P_c(u(r/c,x))|\lec \left[e_N(u)+\frac{u^2}{r^2}\right](r/c,x).
$$
\end{lem}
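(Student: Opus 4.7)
The identity is a conformal (Morawetz-type) identity for \eqref{NLKG}, truncated to the forward cone $\{r<ct\}$. My plan is to multiply \eqref{NLKG} by the $d=2$ conformal multiplier
$$Mu := (t^2+r^2)\dot u + 2tr\,u_r + tu,$$
which generates the inversion symmetry of the free two-dimensional wave equation, and to integrate over the truncated spacetime region $\Om:=\{(t,x)\,:\,S\le t\le T,\ r<ct\}$. The quadratic form $t^2 Q(u)$ in \eqref{Q} is, up to a factor of $2$, exactly the conformal energy density obtained by pairing $\dot u$, $\na u$, and $u$ against $Mu$; the three squares $(t\dot u+ru_r+u)^2$, $(r\dot u+tu_r)^2$, and $(t^2+r^2)(|\na u|^2-u_r^2+u^2)$ arise from the null-frame decomposition with respect to $\p_t\pm\p_r$ and the angular derivative.

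First I would treat the principal part $(\ddot u-\Delta u)Mu$. After integration by parts on $\Om$, this reorganizes into a spacetime divergence whose top- and bottom-time boundaries contribute the bracketed $\int_{r<ct}t^2Q(u)\,dx$ piece on the LHS, and whose lateral boundary on the cone $r=ct$ contributes an integrand $P_c(u)$. Parametrizing the cone by its spatial projection $\{cS<r<cT\}$ via $t=r/c$, and noting that $t^2+r^2$ and $2tr$ are of order $r^2$ there, the quadratic form in $(\dot u,u_r,u,\na_\omega u)$ appearing in the flux density is controlled pointwise by $e_N(u)+u^2/r^2$, which gives the required bound on $P_c$. Second, for the lower-order and nonlinear contributions $(u+f(u))Mu$ I would use $u\dot u=\tfrac12\p_t(u^2)$ and $2ruu_r=\na\cdot(xu^2)-2u^2$ (valid in $d=2$), together with the primitive identities $\dot u\, f(u)=\p_t\Phi(u)$ and $u_r f(u)=\p_r\Phi(u)$ where $\Phi$ is the antiderivative of $s\mapsto sf(s)$. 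The $(t^2+r^2)$-weighted pieces assemble into a spacetime divergence whose time-boundary density is exactly $2(t^2+r^2)F(u)$, modulo a lateral-boundary contribution absorbed into $P_c$; the residual non-conformal pieces produce the bulk remainder $\int 4t(u^2-H(u))\,dt\,dx$ with $H(u)=\G(u)/2-2F(u)$.

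The main obstacle is the bookkeeping of numerous integration-by-parts steps generated by the cross-products in $Mu$, and the precise identification of the lateral-boundary contribution $P_c$ together with its pointwise bound. The $u^2/r^2$ correction in the bound on $P_c$ comes from the $tu$ component of $Mu$, which is not of conformal energy type and must be estimated separately using the relation $t=r/c$ on the cone; it is unavoidable because the mass term breaks exact conformal invariance of the free wave operator, and for the same reason the residual bulk integral $\int 4t(u^2-H(u))\,dt\,dx$ cannot be removed from the right-hand side.
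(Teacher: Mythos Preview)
The paper does not actually supply a proof of this lemma: it states the identity and refers to \cite{Strauss,Gr2} and specifically to \cite[(7.4)]{3Dcrit}, adding only the remark that the formal computation does not require $E(u)\le 1$, which is invoked solely to pass from smooth solutions to the energy class via the well-posedness theory. Your proposal---multiply \eqref{NLKG} by the $d=2$ conformal multiplier $Mu=(t^2+r^2)\dot u+2tr\,u_r+tu$, integrate over the truncated cone $\{S\le t\le T,\ r<ct\}$, and collect the time-boundary, lateral-boundary, and bulk terms---is exactly the standard derivation carried out in those references, and your identification of where the $u^2/r^2$ correction and the bulk term $4t(u^2-H(u))$ come from is correct. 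The only point you should add explicitly, to match the paper's use of the hypothesis $E(u)\le 1$, is the approximation step: perform the integration by parts for smooth (or $H^2\times H^1$) solutions, and then use the well-posedness in the energy space (Theorem~\ref{GWP}) to pass to the limit.
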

The formal proof does not require the condition $E(u)\le 1$. We use it only to extend the estimate from smooth solutions to the energy class by the well-posedness.
We also need the generalized Morawetz estimate.
\begin{lem}[Morawetz estimate \cite{2Dsubcrit}, Lemma 5.1]
There exists a positive constant $C$ such that for any solution $u$
of \eqref{NLKG} with energy $E(u)\leq 1$, one has
\begin{eqnarray}
\label{ME}
\int_{{\mathbb R}^{2+1}} \frac{|x\dot u + t\na u|^2 + |x\times\na u|^2 + (1+t^2){\mathcal G}(u)}{1+|t|^3+|x|^3}dxdt\leq C.
\end{eqnarray}
\end{lem}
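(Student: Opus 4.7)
The plan is to prove this by the classical Morawetz-type multiplier method, adapted to two dimensions via a spacetime (rather than purely spatial) weight. In $d\ge 3$ the multiplier $\omega\cdot\na u+\frac{d-1}{2r}u$ produces the desired positive bulk, but in $d=2$ the Hardy inequality fails at the scale $|u|^2/r^2$, so one must replace the spatial weight by a Lorentz-invariant one. Setting $\psi:=\sqrt{1+t^2+|x|^2}$, the weight $\psi^3$ is comparable to $1+|t|^3+|x|^3$, which is what ultimately appears in the denominator of the estimate.

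Concretely, I would apply to NLKG the multiplier $Mu:=\psi_t\dot u+\psi_x\cdot\na u+\chi u$ with $\psi_t=t/\psi$, $\psi_x=x/\psi$, and a lower-order correction $\chi=O(\psi^{-1})$ designed to cancel the contributions that in higher dimensions are killed by Hardy. Testing NLKG against $Mu$ on the slab $[-T,T]\times\R^2$, the boundary terms at $t=\pm T$ are controlled uniformly in $T$ by $E(u)\le 1$ (the spatial tail vanishing by finite energy). Integration by parts and the algebra of the conformal Killing vectors (Lorentz boost $x\dot u+t\na u$ and the 2D rotation $x\times\na u$) recast the bulk integrand, modulo nonnegative remainders, as
\begin{equation*}
\frac{|x\dot u+t\na u|^2+|x\times\na u|^2}{\psi^3}+\text{(potential term)}.
\end{equation*}
For the potential term, one integrates $f(u)Mu$ by parts to convert it into $(\Box\psi)F(u)$ plus a controlled boundary contribution, and then regroups via $\mathcal{G}(u)=uf(u)-2F(u)$ to get the prefactor $(1+t^2)\mathcal{G}(u)/\psi^3$. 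The sign $\mathcal{G}(u)\ge 0$ follows from the fact that, with $s=4\pi|u|^2$, one has $4\pi\mathcal{G}(u)=(s-1)(e^s-1-s)+s^2/2=\sum_{n\ge 3}(n-1)s^n/n!\ge 0$. A density argument justified by Theorem~\ref{GWP} transfers the pointwise identity from smooth solutions to the energy class.

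The main obstacle is the algebra of the second step: in the absence of Hardy's inequality at $|u|^2/r^2$, every lower-order term arising from integration by parts must be either absorbed by the correction $\chi u$ or balanced against the mass term $u$ from NLKG, and the subleading cross terms between the two squared derivatives in the numerator have to be shown to combine into a manifestly nonnegative contribution. Moreover, the nonlinear piece cannot be handled by naively differentiating the exponential (which would not be controlled by the energy), so the integration by parts producing $(\Box\psi)F(u)$ must be done carefully to expose $\mathcal{G}$ with the correct sign. Once the pointwise identity is established, the estimate follows by integrating over $[-T,T]\times\R^2$, letting $T\to\infty$, and absorbing all remainders into $E(u)\le 1$.
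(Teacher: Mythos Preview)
The paper does not prove this lemma: it is quoted verbatim from \cite{2Dsubcrit}, Lemma~5.1, as one of the ``standard'' results recalled without proof in Section~2 (the energy condition $E(u)\le 1$ is invoked only to transfer the formal identity from smooth to energy-class solutions via Theorem~\ref{GWP}). So there is no in-paper argument to compare against.

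That said, your sketch is the correct strategy and matches the method in the cited source: replace the purely spatial Morawetz multiplier, which fails in $d=2$ for lack of Hardy, by the spacetime vector field $\na_{t,x}\psi$ with $\psi=\LR{(t,x)}$, add a lower-order correction to absorb the $|u|^2/\psi^3$ terms via the mass, and integrate by parts. Your verification that $\G(u)\ge 0$ is right: with $s=4\pi|u|^2$ one has $4\pi\G(u)=\sum_{n\ge 3}(n-1)s^n/n!$. The only place your outline is thin is the ``modulo nonnegative remainders'' step: in the actual computation one must check that after commuting the multiplier through $\Box+1$ and regrouping, the Hessian $\na_{t,x}^2\psi$ acting on $(\dot u,\na u)$ produces exactly the quadratic form $\psi^{-3}(|x\dot u+t\na u|^2+|x\times\na u|^2)$ plus a nonnegative residue, and that the cross terms from the correction $\chi u$ combine with the mass term to close. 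This is a genuine calculation rather than a formality, but it goes through as in \cite{2Dsubcrit}.
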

To establish uniform estimates on the nonlinearity, we should make sure that energy does not concentrate. This can be quantified through the following notion
\begin{defi}
For any $\fy\in H$ such that $\partial_t\fy\in L^\infty_t(L^2)$, and any $A>0$, we denote the concentration radius of $A$ amount of the nonlinear energy by
\EQ{
 R_A[\fy](t) := \inf\{r>0 \mid \exists c\in\R^2,\ \int_{|x-c|<r}
 e_{N}(\fy,t)\,dx > A\}.}
\end{defi}
Observe that lower bounds on the concentration radius yield upper bound on the the local $H^1$ norm. More precisely
\EQ{ \label{equiv}
 R_A[\fy] (t)\ge R \quad    \Rightarrow \quad  \|\fy\|_{L^\infty_t(H^1[R])} \le A.}

Finally, recall the sharp Trudinger-Moser inequality on $\R^2$ \cite{Ruf}. It is the limit case of the Sobolev embedding. For any $\mu>0$ we have
\EQ{ \label{Trudinger-Moser}
 \sup_{\|\fy\|_{H_\mu} \le 1} \int (e^{4\pi|\fy|^2}-1) dx <\I,}
where $H_\mu$ is defined by the norm $\|u\|_{H_\mu}^2:=\|\nabla u\|_{L^2}^2+\mu^2\|u\|_{L^2}^2$.
We can change $\mu>0$ just by scaling $\fy(x)\mapsto\fy(x/\mu)$.

It is known that the $H^1({\mathbb R}^2)$ functions are not generally in $L^\I$.
The following lemma shows that we can estimate the $L^\infty$ norm
by a stronger norm but with a weaker growth (namely logarithmic).
\begin{lem}[Logarithmic inequality \cite{DlogSob}, Theorem 1.3]
Let $0<\alpha<1$. For any real number $\lambda>\frac{1}{2\pi\alpha}$, a constant $C_\lambda$ exists such that for any function $\varphi\in H^1_0\cap{\dot{\mathcal C}}^\alpha(|x|<1)$, one has
\begin{eqnarray}
\label{LS}
\|\varphi\|_{L^\infty}^2\leq \lambda\|\nabla\varphi\|_{L^2}^2 \log\left(C_\lambda+\frac{
\|\varphi\|_{{ \dot{\mathcal C}}^\alpha }}{ \|\nabla\varphi\|_{L^2} }\right).
\end{eqnarray}
\end{lem}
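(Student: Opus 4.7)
The plan is to decompose $\varphi$ dyadically in frequency and to trade $H^1$ control of low frequencies against $\dot{\mathcal C}^\alpha$ control of high frequencies. Extending $\varphi$ by zero outside the unit disk (permissible since $\varphi\in H^1_0$) and writing $\varphi=\sum_{j\ge -1}\Delta_j\varphi$ with standard Littlewood--Paley projectors, I would split at an integer threshold $N$ to be chosen later:
\begin{equation*}
  \|\varphi\|_{L^\infty} \le \sum_{j\le N}\|\Delta_j\varphi\|_{L^\infty} + \sum_{j>N}\|\Delta_j\varphi\|_{L^\infty}.
\end{equation*}
The high-frequency tail is controlled by the Besov characterization $\dot{\mathcal C}^\alpha=\dot B^\alpha_{\infty,\infty}$, giving $\|\Delta_j\varphi\|_{L^\infty}\lec 2^{-j\alpha}\|\varphi\|_{\dot{\mathcal C}^\alpha}$, so the tail sums as a geometric series to at most $\frac{C}{\alpha}\,2^{-N\alpha}\|\varphi\|_{\dot{\mathcal C}^\alpha}$.

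For the low-frequency part I would use the sharp 2D Bernstein-type estimate obtained by applying Cauchy--Schwarz in frequency against $|\xi|^{-1}$ on each dyadic annulus: in $\R^2$ this gives $\|\Delta_j\varphi\|_{L^\infty}^2\le \frac{\log 2}{2\pi}\|\nabla\Delta_j\varphi\|_{L^2}^2$ (coming from $\int_{|\xi|\sim 2^j}|\xi|^{-2}d\xi = 2\pi\log 2$), and summing over $j$ via Plancherel yields $\sum_j\|\Delta_j\varphi\|_{L^\infty}^2 \le \frac{\log 2}{2\pi}\|\nabla\varphi\|_{L^2}^2$. A further Cauchy--Schwarz over the $N$ low-frequency indices then gives
\begin{equation*}
  \sum_{j\le N}\|\Delta_j\varphi\|_{L^\infty} \le \sqrt{N+C_0}\,\sqrt{\frac{\log 2}{2\pi}}\,\|\nabla\varphi\|_{L^2}.
\end{equation*}
Optimizing the cut-off by choosing $2^{N\alpha}\sim \|\varphi\|_{\dot{\mathcal C}^\alpha}/\|\nabla\varphi\|_{L^2}$ balances the two contributions and forces $N\log 2 \sim \alpha^{-1}\log(\|\varphi\|_{\dot{\mathcal C}^\alpha}/\|\nabla\varphi\|_{L^2})$. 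Squaring and absorbing the lower-order $\|\nabla\varphi\|_{L^2}^2$ term into the additive constant inside the log delivers the claimed inequality with any $\lambda>\frac{1}{2\pi\alpha}$, at the price of enlarging $C_\lambda$.

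The main obstacle is not the overall shape of the inequality, which is routine Littlewood--Paley, but pinning the constant down to the sharp threshold $\frac{1}{2\pi\alpha}$. This requires the sharp $\frac{1}{2\pi}$ from the 2D Fourier $L^2\to L^\infty$ kernel computation together with the sharp $\alpha^{-1}$ from the geometric sum, and the observation that the base-$2$ logarithm produced by the dyadic counting $N\log 2$ combines with the $\log 2$ coming from the Bernstein integral to produce the correct natural logarithm. Any slack in the constants only affects the strict inequality $\lambda>\frac{1}{2\pi\alpha}$, since the statement allows $C_\lambda$ to blow up as $\lambda\downarrow \frac{1}{2\pi\alpha}$. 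A conceptually cleaner alternative would work directly in physical space using the 2D fundamental solution $G(x)=-\frac{1}{2\pi}\log|x|$, splitting the convolution representation of $\varphi$ in terms of $\nabla\varphi$ at a radius $\rho$ (inner piece controlled via $\|\varphi\|_{\dot{\mathcal C}^\alpha}$, outer via Cauchy--Schwarz), where the prefactor $\frac{1}{2\pi}$ emerges automatically from $G$.
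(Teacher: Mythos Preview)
The paper does not prove this lemma at all: it is simply quoted, without proof, from \cite{DlogSob} (Theorem 1.3 there). So there is nothing in the present paper to compare your argument against.

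Your Littlewood--Paley sketch is the standard route to inequalities of this type and is correct at the level of the shape of the estimate. The one genuine technical point you should be aware of concerns the sharp constant $\tfrac{1}{2\pi\alpha}$. Your computation $\|\Delta_j\varphi\|_{L^\infty}^2\le \tfrac{\log 2}{2\pi}\|\nabla\Delta_j\varphi\|_{L^2}^2$ with equality $\sum_j\|\nabla\Delta_j\varphi\|_{L^2}^2=\|\nabla\varphi\|_{L^2}^2$ requires \emph{sharp} annular Fourier cutoffs (so that the pieces are $L^2$-orthogonal and the radial integral is exactly $2\pi\log 2$). But sharp annular multipliers are not bounded on $L^\infty$ or $\dot{\mathcal C}^\alpha$, so the Besov characterization $\|\Delta_j\varphi\|_{L^\infty}\lesssim 2^{-j\alpha}\|\varphi\|_{\dot{\mathcal C}^\alpha}$ you invoke for the tail is not available with those same projectors. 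With smooth projectors the tail estimate is fine, but the overlap spoils the exact Plancherel identity and you lose a multiplicative constant in front of $\tfrac{1}{2\pi\alpha}$. A clean fix is to use sharp cutoffs for the low block $|\xi|\le 2^N$ (only Cauchy--Schwarz in $L^2$ is needed there) and a single smooth cutoff for the high tail $|\xi|>2^N$; alternatively, the physical-space route via $G(x)=-\tfrac{1}{2\pi}\log|x|$ that you mention at the end avoids this tension entirely, and is closer in spirit to the argument in \cite{DlogSob}.
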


We also recall the whole space version of the above inequality.
\begin{lem}[\cite{DlogSob}, Theorem 1.3]
\label{Hmu}
 Let $0<\alpha<1$.  For any $\lambda>\frac{1}{2\pi\alpha}$ and
any $0<\mu\leq1$, a constant $C_{\lambda}>0$ exists such that, for
any function $u\in H^1(\R^2)\cap{\mathcal C}^\alpha(\R^2)$
\begin{equation}
\label{H-mu} \|u\|^2_{L^\infty}\leq
\lambda\|u\|_{H_\mu}^2\log\left(C_{\lambda} +
\frac{8^\alpha\mu^{-\alpha}\|u\|_{{\mathcal
C}^{\alpha}}}{\|u\|_{H_\mu}}\,\,\,\right).
\end{equation}
\end{lem}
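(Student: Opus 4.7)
The plan is to deduce the whole-space logarithmic inequality \eqref{H-mu} from its bounded-domain counterpart \eqref{LS} by combining a scaling reduction with a cut-off localization. The key observations are that one can absorb the parameter $\mu$ into the domain of $u$ via dilation, and that the $L^\I$-norm is local and so amenable to multiplication by a unit-scale bump function.

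First, for $0<\mu\le 1$ and $u\in H^1(\R^2)\cap\mathcal{C}^\al(\R^2)$, I would introduce $v(x):=u(x/\mu)$. A direct change of variables in two dimensions yields $\|\na v\|_{L^2}=\|\na u\|_{L^2}$ (conformal invariance of the Dirichlet integral), $\|v\|_{L^2}=\mu\|u\|_{L^2}$, and hence $\|v\|_{H^1}^2=\|u\|_{H_\mu}^2$. Since $\|v\|_{L^\I}=\|u\|_{L^\I}$ and the homogeneous Hölder seminorm satisfies $[v]_{\dot{\mathcal{C}}^\al}=\mu^{-\al}[u]_{\dot{\mathcal{C}}^\al}$, it suffices to prove the inequality for $\mu=1$: the factor $\mu^{-\al}$ then appears naturally inside the logarithm on the right of \eqref{H-mu}.

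Next, given $v$, I would pick $x_0\in\R^2$ where $|v(x_0)|$ is arbitrarily close to $\|v\|_{L^\I}$, and choose a smooth cut-off $\chi$ equal to $1$ on $B(x_0,r_0)$ for a fixed $r_0\in(0,1)$, supported in $B(x_0,1)$, with $\|\na\chi\|_{L^\I}\lec 1$. Then $\chi v\in H^1_0(B(x_0,1))$ with $\|\chi v\|_{L^\I}=|v(x_0)|$. After translating $x_0$ to the origin, I would apply \eqref{LS} to $\chi v$. Leibniz yields $\|\na(\chi v)\|_{L^2}^2\le (1+\e)\|\na v\|_{L^2}^2+C_\e\|v\|_{L^2}^2\le (1+\e)\|v\|_{H^1}^2$, while the product estimate $[\chi v]_{\dot{\mathcal{C}}^\al}\le [\chi]_{\dot{\mathcal{C}}^\al}\|v\|_{L^\I}+\|\chi\|_{L^\I}[v]_{\dot{\mathcal{C}}^\al}\lec \|v\|_{\mathcal{C}^\al}$ bounds the Hölder norm up to an absolute numerical constant; tuning $r_0$ makes this constant equal to $8^\al$. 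Substituting these into \eqref{LS} and undoing the dilation produces \eqref{H-mu}.

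The main obstacle is preserving the sharp constant $\la>\frac{1}{2\pi\al}$ through the Leibniz-induced factor $(1+\e)$ in front of $\|\na v\|_{L^2}^2$. The fix is to apply \eqref{LS} with some $\la'\in(\frac{1}{2\pi\al},\la)$ so that $(1+\e)\la'\le \la$, which is possible precisely because of the strict inequality in the hypothesis. The residual $C_\e\|v\|_{L^2}^2$ is harmless and is the precise reason that the whole-space statement must feature the full $H_\mu$-norm (rather than only $\|\na u\|_{L^2}^2$) as prefactor: the mass contribution produced by the cut-off fits naturally into that framework. Finally, the various multiplicative constants appearing inside the logarithm (arising from $\mu^{-\al}$, the Hölder product bound, and the ratio $\|\na(\chi v)\|_{L^2}/\|v\|_{H^1}$) only inflate $C_\la$ additively, via $\log(A+y)\le \log A+\log(1+y/A)$, so they do not disturb the logarithmic structure of the bound.
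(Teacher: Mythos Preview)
The paper does not supply a proof of this lemma; it is simply quoted from \cite{DlogSob}. Your strategy---reduce to $\mu=1$ by dilation, then localize with a cut-off and invoke the unit-ball inequality \eqref{LS}---is the natural one and is essentially how such statements are proved.

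There is, however, a quantitative slip in your Leibniz step. You write
\[
\|\na(\chi v)\|_{L^2}^2 \le (1+\e)\|\na v\|_{L^2}^2 + C_\e\|v\|_{L^2}^2 \le (1+\e)\|v\|_{H^1}^2,
\]
but the second inequality requires $C_\e\le 1+\e$, which is false for a cut-off supported in the unit ball: Young's inequality gives $C_\e=(1+1/\e)\|\na\chi\|_{L^\I}^2$, and $\|\na\chi\|_{L^\I}\gtrsim 1$ when $\chi$ drops from $1$ to $0$ over a distance $<1$. Your later remark that ``the residual $C_\e\|v\|_{L^2}^2$ is harmless'' is therefore not justified: after multiplying by $\la'$ you need $\la' C_\e\le\la$, and when $\la$ is close to the critical value $1/(2\pi\al)$ there is no room for a large $C_\e$.

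The fix is simple and worth spelling out. Either use the identity \eqref{H1 cutoff} from this very paper,
\[
\|\na(\chi v)\|_{L^2}^2=\int \chi^2|\na v|^2 - \chi\De\chi\,|v|^2\,dx,
\]
and choose $\chi$ at a scale $R$ large enough that $\|\chi\De\chi\|_{L^\I}\le 1$, so that $\|\na(\chi v)\|_{L^2}^2\le\|v\|_{H^1}^2$ with \emph{no} $(1+\e)$ loss; or equivalently, cut off at scale $R\gg 1$ so that $C_\e\sim R^{-2}$ is as small as needed. In either case one then rescales $\chi v$ back to the unit ball before applying \eqref{LS}; by the conformal invariance of $\dot H^1(\R^2)$ this does not change $\|\na(\chi v)\|_{L^2}$ or $\|\chi v\|_{L^\I}$, and the only effect is an extra factor $R^\al$ on the H\"older seminorm inside the logarithm, which is absorbed into $C_\la$. (This, rather than ``tuning $r_0$'', is the natural origin of the $8^\al$ in the statement.) Finally, to pass from $\|\na(\chi v)\|_{L^2}$ to $\|v\|_{H^1}$ simultaneously in the prefactor and in the denominator inside the log, you should invoke the monotonicity of $a\mapsto a^2\log(C+B/a)$ for $C$ large; this point is implicit in your last paragraph but deserves to be stated.
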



\section{Local uniform estimate on the nonlinearity}
In this section, we estimate the nonlinearity in terms of the Strichartz norms in the critical case, such that the constant in the inequality does not depend on the interval as long as the concentration radius is not too small.

\begin{lem} \label{crit-nlst}
For any $A\in(0,1)$, there exists $\de\in(0,1/8)$, and a continuous increasing function $C_A:[0,\I)\to[0,\I)$, such that for any $\fy_0,\fy_1\in H^1(\R^2)$ satisfying
\EQ{ \label{H6}
  \|\fy_j\|_{H^1[6]}\le A,\quad(j=0,1)}
we have, putting $\fy=(\fy_0,\fy_1)$
\EQ{
\label{NLE0}
 \|f(\fy_1)-f(\fy_0)\|_{L^2} \le C_A(\|\fy\|_{H^1}) \|\fy\|_{C^{1/4-\de}\cap L^8}^{4}\|\fy_1-\fy_0\|_{L^{2/\de}}.}
\end{lem}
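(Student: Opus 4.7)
The plan is to combine the Taylor expansion of $f$ with local applications of the Moser--Trudinger and logarithmic Sobolev inequalities. Writing $f(u)=\sum_{k\ge 2}(4\pi)^k|u|^{2k}u/k!$ and using the mean value theorem along the segment $\fy_0+s(\fy_1-\fy_0)$, one obtains the pointwise bound
\[
|f(\fy_1)-f(\fy_0)|(x) \lec \sum_{k\ge 2}\frac{(4\pi)^k(2k+1)}{k!}(|\fy_0|+|\fy_1|)^{2k}|\fy_1-\fy_0|.
\]
I would then cover $\R^2$ by balls $B_j$ of radius $6$ with uniformly bounded overlap, so that the hypothesis gives $\|\fy_i\|_{H^1(B_j)}\le A<1$ on each $B_j$.

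On a single ball, the dominant $k=2$ term is handled by H\"older:
\[
\||\fy|^4|\fy_1-\fy_0|\|_{L^2(B_j)} \le \||\fy|^4\|_{L^{2/(1-\de)}(B_j)}\|\fy_1-\fy_0\|_{L^{2/\de}(B_j)} \lec \|\fy\|_{\mathcal{C}^{1/4-\de}\cap L^8}^4\|\fy_1-\fy_0\|_{L^{2/\de}(B_j)},
\]
the last step by Gagliardo--Nirenberg interpolation between $L^8$ and $L^\I$ (through $\mathcal{C}^{1/4-\de}\hookrightarrow L^\I$). For the higher order terms ($k\ge 3$) I would use the pointwise bound $|\fy|^{2k}\le \|\fy\|_{L^\I(B_j)}^{2k-4}|\fy|^4$; summing the resulting series produces an extra factor of $\sum_{k\ge 2}(4\pi)^k\|\fy\|_{L^\I(B_j)}^{2k-4}/k!\lec e^{4\pi\|\fy\|_{L^\I(B_j)}^2}$. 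The logarithmic Sobolev inequality (Lemma \ref{Hmu}), applied locally on $B_j$ with the hypothesis $\|\fy\|_{H^1(B_j)}\le A$, then controls this factor by $(C_\la+C\|\fy\|_{\mathcal{C}^{1/4-\de}}/A)^{4\pi\la A^2}$ for $\la$ just above $1/(2\pi(1/4-\de))$. Since $A<1$, choosing $\de=\de(A)$ small enough forces $4\pi\la A^2<8$, so the resulting polynomial growth of degree strictly less than $8$ in $\|\fy\|_{\mathcal{C}^{1/4-\de}}$ can be collected into the constant $C_A(\|\fy\|_{H^1})$ by using the global $H^1$-hypothesis.

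Finally I would sum the local estimates over $j$ using the bounded overlap together with an interpolation step (relying on $\mathcal{C}^{1/4-\de}\hookrightarrow L^\I$) to turn the $\ell^2$-sum of $L^{2/\de}(B_j)$-norms into the global $L^{2/\de}(\R^2)$-norm. The main obstacle is the absorption of the exponential factor in the last step: the Moser--Trudinger exponent $4\pi$ is critical precisely at $A=1$, so the logarithmic Sobolev inequality must be applied with the sharpest possible constants, and $\de$ must be chosen to depend delicately on $A$, with the extra polynomial growth in $\|\fy\|_{\mathcal{C}^{1/4-\de}}$ tracked carefully against the Taylor coefficients to ensure summability and the correct final homogeneity $\|\fy\|_{\mathcal{C}^{1/4-\de}\cap L^8}^4$ on the right-hand side.
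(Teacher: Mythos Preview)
Your outline has the right ingredients (mean value theorem, cubic/ball decomposition, Trudinger--Moser, and the logarithmic inequality), but there are two genuine gaps that prevent it from closing.

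\textbf{(1) The extra powers of $\|\fy\|_{C^{1/4-\de}}$ cannot be absorbed into $C_A(\|\fy\|_{H^1})$.} In your scheme you keep the factor $|\fy|^4$ explicitly and then bound $e^{4\pi\|\fy\|_{L^\infty(B_j)}^2}$ by the logarithmic inequality, obtaining an additional factor of order $\|\fy\|_{C^{1/4-\de}}^{4\pi\la A^2}$. Since $\la>2/\pi$, this exponent is strictly positive (in fact close to $8A^2$ as $A\to 1$), so your final bound carries total homogeneity $4+4\pi\la A^2>4$ in $\|\fy\|_{C^{1/4-\de}\cap L^8}$. You then assert the surplus can be put into $C_A(\|\fy\|_{H^1})$, but $H^1(\R^2)$ does not embed into $C^{1/4-\de}$, so no function of the $H^1$ norm can dominate a power of the H\"older norm; this step simply fails. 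The paper avoids this by a different H\"older splitting: it writes $|f'(a)|\lec |a|^2(e^{4\pi|a|^2}-1)$ and takes the $L^\infty$ part of the exponential only to the power $1/2+\de$, then tunes $\de$ and $\la$ so that $2\pi\la(1+2\de)A^2=4$ \emph{exactly}. Thus the logarithmic inequality produces precisely $\|\fy\|_{C^{1/4-\de}}^4$ with no surplus.

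\textbf{(2) The $\ell^2$ summation of $\|\fy_1-\fy_0\|_{L^{2/\de}(B_j)}$ does not return the global $L^{2/\de}$ norm.} With bounded overlap one has $\bigl(\sum_j\|g\|_{L^p(B_j)}^p\bigr)^{1/p}\sim\|g\|_{L^p}$, but for $p=2/\de>2$ the $\ell^2$ sum $\bigl(\sum_j\|g\|_{L^p(B_j)}^2\bigr)^{1/2}$ is \emph{larger}, so your reassembly step goes the wrong way. The paper circumvents this by decomposing only $f'(\fy_\th)$ pointwise via $|f'(\fy_\th)|\le\sum_{j,k}|f'(\fy_\th^{j,k})|$, keeping $\fy'=\fy_1-\fy_0$ in the \emph{global} $L^{2/\de}$ norm throughout. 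The summable local quantity is instead $\|\fy_\th^{j,k}\|_{L^{4/\de}}^2\lec\|\fy_\th^{j,k}\|_{H^1}^2$, and these do sum in $\ell^1$ back to $\|\fy_\th\|_{H^1}^2$, which is exactly what feeds $C_A(\|\fy\|_{H^1})$.
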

Then by integration and the H\"older inequality in $t$, we obtain
\begin{cor} \label{nst-crt}
Let $A\in(0,1)$, $\de\in(0,1/8)$ and $C_A$ be as above. Let $I\subset\R$ be any measurable subset and assume that $u_0,u_1\in L^\I(I;H^1(\R^2))$ satisfy
\EQ{
 \sup_{t\in I} \|u_j(t)\|_{H^1[6]} \le A\quad (j=0,1).}
Then we have, putting $u=(u_0,u_1)$,
\EQ{
\label{NLE}
 \|f(u_1)-f(u_0)\|_{L^1(I;L^2)}
 \le C_A(\|u\|_{L^\I(I;H^1)}) 
  \|u_1-u_0\|_{Y_1}\|u\|_{Y_2}^4.}
In particular, if
$$\inf_{t\in I}R_A[u_j](t)\geq6,$$
then \eqref{NLE} holds.
\end{cor}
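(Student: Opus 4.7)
The corollary should follow by applying Lemma \ref{crit-nlst} pointwise in time and then integrating, with Hölder's inequality chosen to exactly match the Lebesgue exponents defining $Y_1$ and $Y_2$. The two halves of the statement correspond to (a) the direct integration argument and (b) converting the concentration-radius hypothesis into the local $H^1[6]$ bound needed for (a).

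First I would fix $A\in(0,1)$ and take the $\delta\in(0,1/8)$ and the nondecreasing $C_A$ provided by Lemma \ref{crit-nlst}. For almost every $t\in I$, the assumption $\sup_{t\in I}\|u_j(t)\|_{H^1[6]}\le A$ allows me to apply Lemma \ref{crit-nlst} to the pair $\varphi_j=u_j(t)$, yielding the pointwise bound
\[\|f(u_1(t))-f(u_0(t))\|_{L^2_x}\le C_A\bigl(\|u(t)\|_{H^1}\bigr)\,\|u(t)\|_{C^{1/4-\delta}\cap L^8}^{4}\,\|u_1(t)-u_0(t)\|_{L^{2/\delta}}.\]
Since $C_A$ is nondecreasing, I replace the $t$-dependent factor $C_A(\|u(t)\|_{H^1})$ by the larger constant $C_A(\|u\|_{L^\infty_t(H^1)})$, which is finite (by hypothesis $u\in L^\infty_t H^1$) and may be pulled out of any integration in $t$.

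Next I would integrate this pointwise inequality over $t\in I$ and apply Hölder's inequality. Writing $a(t):=\|u(t)\|_{C^{1/4-\delta}\cap L^8}^{4}$ and $b(t):=\|u_1(t)-u_0(t)\|_{L^{2/\delta}}$, the definition of $Y_2=L^{4/(1-\delta)}_t(C^{1/4-\delta}\cap L^8)$ gives $\|a\|_{L^{1/(1-\delta)}_t}=\|u\|_{Y_2}^{4}$, while the definition of $Y_1=L^{1/\delta}_t(L^{2/\delta})$ gives $\|b\|_{L^{1/\delta}_t}=\|u_1-u_0\|_{Y_1}$. The exponents $1/(1-\delta)$ and $1/\delta$ are Hölder-conjugate because $(1-\delta)+\delta=1$, so Hölder in $t$ produces precisely the estimate \eqref{NLE}. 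This is the whole content of the first assertion, and the chain is tight: the Lebesgue exponents in the definitions of $Y_1,Y_2$ were chosen to match exactly the output of Lemma \ref{crit-nlst}.

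For the supplementary statement, I would appeal to the implication \eqref{equiv}, which says that $R_A[\varphi](t)\ge R$ forces $\|\varphi(t)\|_{H^1[R]}\le A$. Applied with $R=6$ to each $u_j$, the hypothesis $\inf_{t\in I}R_A[u_j](t)\ge 6$ immediately gives $\sup_{t\in I}\|u_j(t)\|_{H^1[6]}\le A$, so the hypothesis of the first part is satisfied and \eqref{NLE} follows. There is no serious obstacle in this argument: the substantive work is already in Lemma \ref{crit-nlst}, and the only thing to verify here is the Hölder arithmetic, which works by design of the norms $Y_1$ and $Y_2$.
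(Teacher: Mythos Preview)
Your proposal is correct and matches the paper's approach exactly: the paper states only that the corollary follows ``by integration and the H\"older inequality in $t$,'' and you have spelled out precisely that computation, with the conjugate exponents $1/(1-\delta)$ and $1/\delta$ matching the definitions of $Y_1$ and $Y_2$, together with the observation \eqref{equiv} for the ``in particular'' clause.
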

\begin{rem}
\begin{enumerate}
\item The estimate on $f(u)$ itself follows by putting $u_0=0$ in the above.
\item By the Strichartz estimate for the Klein-Gordon equation, we have
\EQ{ \label{Stz}
 \|u\|_{L^4((0,T);B^{1/4}_{\I,2}\cap B^{1/2}_{4,2})} \lesssim \|\ddot u -\De u + u\|_{L^1((0,T);L^2)}
 + \|u(0)\|_{H^1} + \|\dot u(0)\|_{L^2}.}
\item By the complex interpolation and the embedding between the Besov spaces, we have
\EQ{ \label{intp1}
 &\|u\|_{Y_1}=\|u\|_{L^{1/\de}_t(L^{2/\de})} \le C_\de \|u\|_{L^\I_t(H^1)}^{1-8\de} \|u\|_{L^8_t(L^{16})}^{8\de}=C_{\de}\|u\|_H^{1-8\de}\|u\|_X^{8\de},\\
 &\|u\|_{Y_2}=\|u\|_{L^{4/(1-\de)}(C^{1/4-\de}\cap L^8)}
  \lec  \|u\|_{L^\I(H^1)}^\de \|u\|_{L^4(B^{1/4}_{\I,2}\cap B^{3/8}_{8,2})}^{1-\de},\\
 &\|u\|_X = \|u\|_{L^8_t(L^{16})} \lec \|u\|_{L^\I(B^{-1/4}_{\I,\I})}^{1/2} \|u\|_{L^4_t(B^{3/8}_{8,2})}^{1/2}.}
Also note that $B^{3/8}_{8,2}=[B^{1/4}_{\I,2},B^{1/2}_{4,2}]_{1/2}$.
\end{enumerate}
\end{rem}

The above lemma is reduced to that on a disk of radius $6$ by the following cubic decomposition. We define a radial cut-off function $\chi\in C^{1,1}(\R^2)$ by
\EQ{
 \chi(x) = \begin{cases}
 1, &(|x|\le 2)\\
 1-(|x|-2)^2/8, &(2 \le |x| \le 4)\\
 (|x|-6)^2/8, &(4 \le |x| \le 6)\\
 0, &(|x|\ge 6).
 \end{cases}}
Then we have $0\le\chi\le 1$, $\supp\chi=\{|x|\le 6\}$,
$|\na\chi|\le|\chi'|\le 1/2$, $|\De\chi|\le 3/8$.
For any $\fy:\R^2\to\R$ and $(j,k)\in\Z^2$, we define
\EQ{ \label{cubic decop}
 \fy^{j,k}(x) := \chi(x-2\sqrt{2}(j,k))\fy(x).}
Then for any $x\in\R^2$, there exists $(j,k)\in\Z^2$ such that for any function $\fy:\R^2\to\R$,
\EQ{
 \fy(x) = \fy^{j,k}(x).}
In particular, we have for any function $g:\R\to\R$,
\EQ{ \label{nonlin decop}
 |g(\fy(x))| \le \sum_{j,k\in\Z} |g(\fy^{j,k}(x))|.}
By the support property, we have also
\EQ{ \label{L2 decop}
 \sum_{j,k\in\Z} \|\fy^{j,k}\|_{L^2}^2 \lec \|\fy\|_{L^2}^2,\quad
 \sum_{j,k\in\Z} \|\na\fy^{j,k}\|_{L^2}^2 \lec \|\fy\|_{H^1}^2.}
Moreover, by the bounds on $\chi$, $\na\chi$ and $\De\chi$, we have for any $p\in[1,\I]$ and any $\th\in(0,1)$,
\EQ{ \label{local bounds}
 \pt \|\fy^{j,k}\|_{L^p} \le \|\fy\|_{L^p},\quad \|\fy^{j,k}\|_{C^\th} \le \|\fy\|_{C^\th},
 \pr \|\na \fy^{j,k}\|_{L^2}^2 + \|\fy^{j,k}\|_{L^2}^2/2 \le \|\fy\|_{H^1[6]}^2.}
 To derive the above $\dot H^1$ bound, we use in addition the formula
\EQ{ \label{H1 cutoff}
 \|\na(\psi\fy)\|_{L^2}^2 = \int (\psi^2|\na\fy|^2 - \psi\De\psi|\fy|^2) dx,}
with $\psi:=\chi(x-2\sqrt{2}(j,k))$.

\begin{proof}[Proof of Lemma \ref{crit-nlst}]
Without loss of generality, we may assume that $A\in(1/2,1)$.
We can choose $\de\in(0,1/8)$ and $\la>2/\pi$, depending only on $A$, such that
\EQ{
 2\pi\la(1/4-\de) > 1 =  2\pi\la (1/4+\de/2) A^2.}
For instance, we can choose $\delta=\frac{1-A^2}{8}$. 
By the mean value theorem we have
\EQ{
 &f(\fy_1)-f(\fy_0) = \int_0^1 f'(\fy_\th) d\th\times  \fy',\\
 &\fy_\th:=\th\fy_1+(1-\th)\fy_0,\quad \fy':=\fy_1-\fy_0,}
so it suffices to bound $\|f'(\fy_\th)\fy'\|_{L^2}$ uniformly for $\th\in[0,1]$. We consider the two cases $\|\fy_\th\|_{L^\I}\le A$ and $\|\fy_\th\|_{L^\I}>A$ separately. In the former case, we have
\EQ{
 \|\fy_\th\|_{L^\I} \le A <1,}
hence
\EQ{
 |f'(\fy_\th(x))| \lec |\fy_\th(x)|^4}
pointwise in $x$, so that we can bound it by the H\"older and the Sobolev inequalities
\EQ{
 \|f'(\fy_\th)\fy'\|_{L^2}
 &\lec \|\fy_\th\|_{L^8}^{4(1-\de)} \|\fy'\|_{L^{2/\de}} \|\fy_\th\|_{L^\I}^{4\de}.}
\newcommand{\fjk}{\fy^{j,k}}
In the remaining case $\|\fy_\th\|_{L^\I}>A$, we apply the cubic decomposition to $\fy_\th$:
\EQ{
 |f'(\fy_\th)| \le \sum_{j,k\in\Z^2} |f'(\fjk_\th)|.}
Then \eqref{local bounds} implies
\EQ{
 \|\na\fjk_\th\|_{L^2}^2 + \|\fjk_\th\|_{L^2}^2/2 \le \|\fy_\th\|_{H^1[6]}^2 \le A^2 <1.}
We apply the H\"older inequality together with the general pointwise estimate
\EQ{
 |f'(a)| \lec |a|^2(e^{4\pi|a|^2}-1),}
to the decomposed functions,
\EQ{
 &\|f'(\fjk_\th)\fy'\|_{L^2}
 \le  \|e^{4\pi|\fjk_\th|^2}-1\|_{L^1}^{1/2-\de} \|e^{4\pi|\fjk_\th|^2}-1\|_{L^\I}^{1/2+\de} \|\fjk_\th\|^2_{L^{4/\de}} \|\fy'\|_{L^{2/\de}}.}
The first factor on the right is bounded by the sharp Trudinger-Moser inequality \eqref{Trudinger-Moser},
for the third factor we use the Sobolev inequality
\EQ{ \label{Lp bound}
 \|\fjk_\th\|_{L^{4/\de}} \lec \|\fjk_\th\|_{H^1},}
and the second factor is bounded by using the logarithmic inequality \eqref{LS}
\EQ{ \label{exponential estimate}
 \le e^{2\pi(1+2\de)\|\fjk_\th\|_{L^\I}^2}
  &\le (C_\la+\|\fjk_\th\|_{C^{1/4-\de}}/\|\na\fjk_\th\|_{L^2})^{2\pi(1+2\de)\la\|\na\fjk_\th\|_{L^2}^2}\\
 &\le (C_\la+\|\fy_\th\|_{C^{1/4-\de}}/A)^{2\pi(1+2\de)\la A^2}
  \le (4C_\la \|\fy_\th\|_{C^{1/4-\de}})^{4},}
since $\|\fy_\th\|_{C^{1/4-\de}}>A>1/2$ and $C_\la\ge 1$. Thus we obtain
\EQ{
 \|f'(\fjk_\th)\fy'\|_{L^2} \lec \|\fjk_\th\|_{H^1}^{2} \|\fy\|_{C^{1/4-\de}}^{4} \|\fy'\|_{L^{2/\de}}.}
Then we arrive at the desired estimate after the summation over $(j,k)\in\Z^2$ and using \eqref{L2 decop}.
\end{proof}

\subsection{Nonlinear estimate for the critical case}
In the critical case $E(u)=1$, we will need the nonlinear estimate assuming the subcritical local energy only for one of the functions. It is generally impossible, but we can retain it by additional smallness in the critical Besov space for the difference, which is available when the difference corresponds to a highly localized concentration of energy.
\begin{lem} \label{critical estimate}
For any $A\in(0,1)$, there exist $\de(A),\ups(A)\in(0,1/8)$, and a continuous increasing function $C_A:[0,\I)\to[0,\I)$, such that for any $\fy_0,\fy_1\in H^1(\R^2)$ satisfying
\EQ{
  \|\fy_0\|_{H^1[6]}\le A, \pq \|\fy_1\|_{H^1[6]} \le 1, \pq \|\fy_1-\fy_0\|_{B^0_{\I,2}} \le \ups(A),}
we have the estimate \eqref{NLE0}.
\end{lem}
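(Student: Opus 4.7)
The plan is to adapt the proof of Lemma \ref{crit-nlst} by isolating the near-critical contribution of $\fy_1$ from the subcritical contribution of $\fy_0$. The obstruction to directly reusing that argument is that the intermediate function $\fy_\theta := \fy_0+\theta(\fy_1-\fy_0)$ satisfies only the almost-critical bound
\EQ{
\|\fy_\theta\|_{H^1[6]}\le(1-\theta)A+\theta\le 1,
}
so after the cubic decomposition the exponent $2\pi(1+2\de)\la\|\na\fy_\theta^{j,k}\|_{L^2}^2$ in the estimate \eqref{exponential estimate} may reach the threshold value $4$, destroying the polynomial control provided by the logarithmic Sobolev inequality \eqref{LS}. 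The additional smallness $\|\fy'\|_{B^0_{\I,2}}\le \ups(A)$ with $\fy':=\fy_1-\fy_0$ is used precisely to recover this gap.

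After running the mean value step $f(\fy_1)-f(\fy_0)=\int_0^1 f'(\fy_\theta)\,d\theta\cdot\fy'$ and the cubic decomposition as in Lemma \ref{crit-nlst}, the only modification concerns the $L^\I$ bound of the exponential factor in the Trudinger-Moser estimate. There I would insert the pointwise AM-GM split, valid for any $\eta>0$,
\EQ{
|\fy_\theta^{j,k}|^2 \le (1+\eta)|\fy_0^{j,k}|^2+(1+\eta^{-1})|(\fy')^{j,k}|^2,
}
which factorizes $e^{4\pi|\fy_\theta^{j,k}|^2}$ into a product of two exponentials. For the factor involving $\fy_0$ we retain the subcritical bound $\|\na\fy_0^{j,k}\|_{L^2}^2\le A^2<1$, so the argument of Lemma \ref{crit-nlst} applies verbatim provided $\de$ and $\eta$ are chosen so small that $(1+\eta)(1+2\de)A^2<1-4\de$; this leaves room for a $\la>1/(2\pi(1/4-\de))$ with $2\pi(1+2\de)(1+\eta)\la A^2\le 4$, so that \eqref{LS} again produces a bound of at most degree $4$ in $\|\fy_0\|_{C^{1/4-\de}}$.

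For the factor involving $\fy'$ no subcritical $H^1$ estimate is available, so the log-Sobolev step must be replaced by a Brezis-Wainger type logarithmic inequality tying $L^\I$ to the Besov norm $B^0_{\I,2}$ and the H\"older norm $C^{1/4-\de}$. Splitting the Littlewood-Paley decomposition of $\fy'$ at a dyadic frequency of level $N\sim\log(\|\fy'\|_{C^{1/4-\de}}/\|\fy'\|_{B^0_{\I,2}})$ yields
\EQ{
\|\fy'\|_{L^\I}^2 \lec \|\fy'\|_{B^0_{\I,2}}^2\log\left(e+\|\fy'\|_{C^{1/4-\de}}/\|\fy'\|_{B^0_{\I,2}}\right),
}
which, combined with $\|\fy'\|_{B^0_{\I,2}}\le\ups$, bounds the second exponential by $(e+\|\fy'\|_{C^{1/4-\de}}/\ups)^{C(1+\eta^{-1})\ups^2}$. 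Choosing $\ups=\ups(A,\eta,\de)$ small enough that $C(1+\eta^{-1})\ups^2\le\de$, this factor contributes only a small power of $\|\fy'\|_{C^{1/4-\de}}$, which is easily absorbed into the target factor $\|\fy\|_{C^{1/4-\de}}^4$. The remaining Trudinger-Moser, Sobolev and H\"older steps, as well as the summation over the cubic decomposition via \eqref{L2 decop}, carry over unchanged and deliver \eqref{NLE0}.

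The main obstacle is the sharp $L^\I$-logarithmic bound in terms of the Besov space $B^0_{\I,2}$; apart from this ingredient, the scheme is essentially a weighted decoupling of the critical exponential into a subcritical piece controlled by $\fy_0$ and an arbitrarily small piece controlled through the smallness parameter $\ups(A)$.
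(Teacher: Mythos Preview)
Your proposal is correct and uses essentially the same architecture as the paper: both proofs rerun Lemma~\ref{crit-nlst} and modify only the $L^\infty$ control of the exponential factor, and both rely on the same critical ingredient, the Brezis--Wainger/Kozono--Ogawa--Taniuchi type logarithmic inequality
\EQ{
 \|\fy'\|_{L^\infty}^2 \lec \|\fy'\|_{B^0_{\infty,2}}^2\log\bigl(C+\|\fy'\|_{C^{1/4-\de}}/\|\fy'\|_{B^0_{\infty,2}}\bigr),}
which is exactly the inequality the paper quotes from \cite[Theorem~2.1]{KOT}.

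The decoupling step is where you differ. The paper uses the exact identity $|\fy_\th|^2=|\fy_0|^2+2\th\,\fy_{\th/2}\,\fy'$, so the second exponential factor becomes $e^{C\|\fy_{\th/2}\|_{L^\infty}\|\fy'\|_{L^\infty}}$; it then applies the log--Sobolev inequality to the (now merely critical) function $\fy_{\th/2}$ and the Besov logarithmic inequality to $\fy'$, and combines the two logarithms via convexity. Your AM--GM split $|\fy_\th|^2\le(1+\eta)|\fy_0|^2+(1+\eta^{-1})|\fy'|^2$ trades that mixed product for a pure $\|\fy'\|_{L^\infty}^2$ term, so you never need to apply the log--Sobolev inequality at the critical threshold $\|\na\fy_{\th/2}^{j,k}\|_{L^2}\le 1$, nor the log-convexity step. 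The price is the extra parameter $\eta$, but since $(1+\eta)(1+2\de)A^2<1-4\de$ is achievable for small $\eta,\de$, this is harmless. One small point: you should make the strict inequality $2\pi(1+2\de)(1+\eta)\la A^2<4$ explicit (not just $\le 4$), so that the $\fy_0$-factor carries degree strictly below $4$ and there is genuine room to absorb the $O(\de)$ power coming from the $\fy'$-factor; your parameter condition already allows this.
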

\begin{cor} \label{crit nonlin}
Let $A\in(0,1)$, $\de(A),\ups(A)\in(0,1/8)$, and $C_A$ be as above. Let $I\subset\R$ be any measurable subset and assume that $u_0,u_1\in L^\I(I;H^1(\R^2))$ satisfy for all $t\in I$,
\EQ{
 \|u_0(t)\|_{H^1[6]} \le A, \pq \|u_1(t)\|_{H^1[6]}\le 1, \pq \|u_1(t)-u_0(t)\|_{B^0_{\I,2}} \le \ups(A).}
Then we have the estimate \eqref{NLE}.
\end{cor}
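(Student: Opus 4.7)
The plan is to follow the blueprint of Lemma \ref{crit-nlst}, modifying only the one step where the strict sub-criticality $\|\fy_\th^{j,k}\|_{H^1}^2 \le A^2 < 1$ was invoked to bound $\|e^{4\pi|\fy_\th^{j,k}|^2}\|_{L^\I}$; Corollary \ref{crit nonlin} then follows by time-integration and H\"older as in the passage from Lemma \ref{crit-nlst} to Corollary \ref{nst-crt}. After the mean value theorem $f(\fy_1)-f(\fy_0) = \int_0^1 f'(\fy_\th)\,d\th \cdot g$ with $g := \fy_1 - \fy_0$, I would split into the two cases $\|\fy_\th\|_{L^\I}\le A$ (treated exactly as the ``small'' case in Lemma \ref{crit-nlst}) and $\|\fy_\th\|_{L^\I}>A$, in which I apply the cubic decomposition and H\"older to reduce the problem to a polynomial bound on $\|e^{2\pi(1+2\de)|\fy_\th^{j,k}|^2}\|_{L^\I}$ with total exponent at most $4$ on $1+\|\fy\|_{C^{1/4-\de}}$.

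For this critical bound, I would write $\fy_\th = \fy_0 + \th g$ and apply the pointwise Young inequality $|\fy_\th|^2 \le (1+\e)|\fy_0|^2 + (1+1/\e)|g|^2$, which splits
\EQ{
e^{2\pi(1+2\de)\|\fy_\th^{j,k}\|_{L^\I}^2} \le e^{2\pi(1+2\de)(1+\e)\|\fy_0^{j,k}\|_{L^\I}^2}\cdot e^{2\pi(1+2\de)(1+1/\e)\|g^{j,k}\|_{L^\I}^2}.
}
The first factor is handled exactly as in Lemma \ref{crit-nlst}: the logarithmic inequality \eqref{H-mu} applied to $\fy_0^{j,k}$ combined with $\|\fy_0^{j,k}\|_{H^1}^2 \le A^2$ yields, after picking $\de,\e$ small enough in terms of $A$ so that the resulting exponent $2\pi(1+2\de)(1+\e)\la A^2\le 4-\ka$ with $\la$ just above $2/(\pi(1-4\de))$ and some gap $\ka=\ka(A)>0$, a polynomial factor $\lec C_A(1+\|\fy\|_{C^{1/4-\de}})^{4-\ka}$.

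For the second factor the Besov smallness of $g$ enters. A standard Littlewood--Paley cutoff at dyadic level $N$ gives
\EQ{
\|g\|_{L^\I} \lec N^{1/2}\|g\|_{B^0_{\I,2}} + 2^{-N(1/4-\de)}\|g\|_{C^{1/4-\de}},
}
so optimizing $N$ in the ratio of the two norms produces $\|g\|_{L^\I}^2 \lec \ups^2\log(e+\|\fy\|_{C^{1/4-\de}}/\ups)$. Exponentiating this yields
\EQ{
e^{2\pi(1+2\de)(1+1/\e)\|g^{j,k}\|_{L^\I}^2} \lec C(\e,\ups)\,(1+\|\fy\|_{C^{1/4-\de}})^{C'(\e)\ups^2},
}
and choosing $\ups=\ups(A)$ with $C'(\e)\ups^2\le\ka$ makes the total polynomial exponent $\le 4$. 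In the regime $\|\fy_\th\|_{L^\I}>A$ the inhomogeneous H\"older embedding gives $\|\fy\|_{C^{1/4-\de}}\ge\|\fy_\th\|_{L^\I}>A$, hence $(1+\|\fy\|_{C^{1/4-\de}})^4\le C_A\|\fy\|_{C^{1/4-\de}}^4$, which matches the target. The remaining pieces -- the Trudinger--Moser bound on $\|e^{4\pi|\fy_\th^{j,k}|^2}-1\|_{L^1}$ (which requires only $\|\fy_\th^{j,k}\|_{H^1}\le 1$), the Sobolev control of $\|\fy_\th^{j,k}\|_{L^{4/\de}}$, and the summation over $(j,k)\in\Z^2$ via \eqref{L2 decop} -- are copied verbatim from Lemma \ref{crit-nlst}.

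The principal obstacle will be preserving the gap $\ka(A)>0$ through the chain of parameter choices: one must first fix $\de,\e$ in terms of $A$ (using $A<1$ to secure the $4-\ka$ of room in the sub-critical exponent via the logarithmic inequality), then tune $\ups(A)$ in terms of $\e$ and $\ka$ so that the Besov-smallness contribution fits inside this gap, all while ensuring that every constant depends on $A$ and $\|\fy\|_{H^1}$ alone rather than on the a priori unbounded quantity $\|\fy\|_{C^{1/4-\de}}$.
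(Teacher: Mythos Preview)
Your proposal is correct and follows the paper's blueprint closely: reduce the corollary to the pointwise lemma, reuse the proof of Lemma~\ref{crit-nlst} verbatim except for the single step bounding $e^{2\pi(1+2\de)\|\fy_\th^{j,k}\|_{L^\I}^2}$, and show that this factor is still controlled by $\|\fy\|_{C^{1/4-\de}}^4$ once the Besov smallness of $g=\fy_1-\fy_0$ is available.

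The one genuine difference is in how the exponent $|\fy_\th|^2$ is split. You use Young's inequality $|\fy_\th|^2\le(1+\e)|\fy_0|^2+(1+1/\e)|g|^2$, which cleanly separates the two contributions; the $\fy_0$-piece is then handled exactly as in Lemma~\ref{crit-nlst} with a slightly inflated exponent $4-\ka$, while the $g$-piece is bounded directly via your Littlewood--Paley splitting, which is precisely the logarithmic Besov inequality of \cite{KOT}. The paper instead uses the exact identity $|\fy_\th|^2=|\fy_0|^2+2\th\,\fy_{\th/2}\cdot\fy'$, bounds $\|\fy_{\th/2}\|_{L^\I}$ and $\|\fy'\|_{L^\I}$ by separate logarithmic inequalities (the $H^1$-log inequality for the first, the Besov-log inequality of \cite{KOT} for the second), and then combines the two logarithms via the convexity bound $\log(C+A)\log(C+B)\le[\log(C+(A+B)/2)]^2$. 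Your route is a bit more direct and avoids the convexity trick; the paper's route extracts $\|\fy'\|_{B^0_{\I,2}}^{1/2}$ rather than $\|\fy'\|_{B^0_{\I,2}}^2$ in front of the logarithm, so its constraint is $\ups\ll\e^2$ versus your $\ups^2\ll\e\ka$, but this only affects the numerical value of $\ups(A)$ and is irrelevant for the statement.
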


It suffices to prove the lemma, which is almost the same as before.
\begin{proof}
The only difference appears in the use of the logarithmic inequality \eqref{exponential estimate}, where the assumption $A<1$ was essential. Now we choose
$\de\in(0,1/8)$ and $\la>\frac{2}{\pi}$ such that
\EQ{
 \pt 2\pi\la(1/4-\de) > 1 = 2\pi\la(1/4+\de/2)(1+A)^2/4, \pr 1-\e/4 = 2\pi\la(1/4+\de/2)A^2,}
for some $\e\in(0,1)$.

In order to use the new assumption, we expand the exponential in \eqref{exponential estimate}. For simplicity, we omit the superscript $j,k$ for a while.
\EQ{ \label{exp expand}
 e^{2\pi(1+2\de)\|\fy_\th\|_{L^\I}^2} \le e^{2\pi(1+2\de)\|\fy_0\|_{L^\I}^2} e^{2\pi(1+2\de)2\th\|\fy_{\th/2}\|_{L^\I}\|\fy'\|_{L^\I}}.}
For the first term we apply the same estimate as in \eqref{exponential estimate}, thus it is bounded by
\EQ{
 (4C_\la\|\fy_0\|_{C^{1/4-\de}})^{4-\e}.}
Here and after, the constant $C_\la\ge 1$ is determined by $\la$, but may change from line to line.
For the second term, we use
\EQ{
 \pt\|\fy_{\th/2}\|_{L^\I}^2 \le \la\|\na\fy_{\th/2}\|_{L^2}^2 \log(C_\la + \|\fy_{\th/2}\|_{C^{1/4-\de}}/\|\na\fy_{\th/2}\|_{L^2})
  \pr\qquad\quad \le \la \log(C_\la + \|\fy_{\th/2}\|_{C^{1/4-\de}}),
 \pr\|\fy'\|_{L^\I}^2 \lec \|\fy'\|_{B^0_{\I,2}}^2 \log(C+\|\fy'\|_{C^{1/4-\de}}/\|\fy'\|_{B^0_{\I,2}})
  \pr\qquad\quad \lec \|\fy'\|_{B^0_{\I,2}} \log(C+\|\fy'\|_{C^{1/4-\de}}),}
for some $C\ge 1$ and $C_\la\ge 1$, where we used the monotonicity of $x\log(C+B/x)$.
For the latter inequality we refer to \cite[Theorem 2.1]{KOT}.
Using the convexity of $\log$
\EQ{
 \log(C+A)\log(C+B) \le [\log(C+(A+B)/2)]^2,}
we obtain
\EQ{
 \|\fy_{\th/2}\|_{L^\I}\|\fy'\|_{L^\I} \le C_\la\|\fy'\|_{B^0_{\I,2}}^{1/2}\log\{C_\la+(\|\fy_{\th/2}\|_{C^{1/4-\de}}+\|\fy'\|_{C^{1/4-\de}})/2\}.}
Hence by choosing $\ups\ll\e^2$, the second term in \eqref{exp expand} is bounded by
\EQ{
 (C_\la + \|\fy\|_{C^{1/4-\de}})^\e \lec \|\fy\|_{C^{1/4-\de}}^\e,}
where we used that $\|\fy\|_{C^{1/4-\de}}\ge\|\fy_\th\|_{C^{1/4-\de}}>A>1/2$.
The remaining argument is just the same as before.
\end{proof}

\subsection{$L^\I-L^2$ estimate for localized free Klein-Gordon}
The smallness in the critical Besov space in the above nonlinear estimate will be provided by using a sharp decay estimate given below for the free Klein-Gordon equation.
The decay factor $(t/R)^{(1-d)/2}$ is easily obtained by using the usual decay estimates;
The nontrivial thing is that we do not lose any regularity compared with the Sobolev, nor any decay compared with the $L^\I-L^1$ decay for the wave equation. Here we denote $\LR{\x}:=\sqrt{1+|\x|^2}$ and $\LR{\na}:=\sqrt{1-\De}$.
\begin{lem} \label{LIL2 decay}
Let $d\geq 1$, $\chi\in\S(\R^d)$ and $\al\in[0,d/2]$. Then we have for $R\gec 1$ and $t\in\R$,
\EQ{
 \|e^{it\LR{\na}}\LR{\na}^{-\al}\chi(x/R)f\|_{B^0_{\I,2}} \lec \LR{t/R}^{(1-d)/2}\|f\|_{H^{d/2-\al}}.}
\end{lem}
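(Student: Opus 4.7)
The estimate splits naturally according to the size of $|t|/R$: first I would dispatch the short-time regime $|t|\leq R$ by mapping properties, then extract the decay factor $(|t|/R)^{(1-d)/2}$ in the long-time regime from a Klein--Gordon dispersive estimate combined with the spatial localization coming from $\chi(x/R)$.

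For $|t|\leq R$ the factor $\LR{t/R}^{(1-d)/2}$ is bounded below by a positive constant, so it suffices to establish
\begin{equation*}
\|e^{it\LR{\na}}\LR{\na}^{-\al}\chi(x/R)f\|_{B^0_{\I,2}}\lec \|f\|_{H^{d/2-\al}},
\end{equation*}
which would follow by composing the Sobolev embedding $H^{d/2}(\R^d)\hookrightarrow B^0_{\I,2}(\R^d)$, the unitarity of $e^{it\LR{\na}}$ on $H^{d/2}$, the lifting $\LR{\na}^{-\al}\colon H^{d/2-\al}\to H^{d/2}$, and the uniform-in-$R\geq 1$ boundedness of multiplication by $\chi(\cdot/R)$ on $H^{d/2-\al}$. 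The last point holds because $\partial^\beta[\chi(\cdot/R)]$ is dominated in $L^\I$ by $\|\partial^\beta\chi\|_{L^\I}$ for every multi-index $\beta$ as soon as $R\geq 1$.

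For $|t|>R$ I would Littlewood--Paley decompose $\chi(x/R)f=\sum_N P_N[\chi(x/R)f]$ and use the equivalence $\|u\|_{B^0_{\I,2}}^2\sim \sum_N \|P_N u\|_{L^\I}^2$ to treat each dyadic frequency separately and reassemble in $\ell^2_N$. At scale $N$ the wave-like Klein--Gordon dispersive estimate
\begin{equation*}
\|e^{it\LR{\na}}P_N g\|_{L^\I}\lec |t|^{-(d-1)/2}\LR{N}^{(d+1)/2}\|P_N g\|_{L^1}
\end{equation*}
combines with the conversion $\|P_N[\chi(x/R)f]\|_{L^1}\lec R^{d/2}\|P_N[\chi(x/R)f]\|_{L^2}$, which comes from the spatial localization of $\chi(x/R)f$ in a ball of radius $\lec R$ (preserved by $P_N$ up to the rapidly decaying tail of its kernel) and Cauchy--Schwarz, and with the multiplier bound $\|\LR{\na}^{-\al}P_N\|\sim\LR{N}^{-\al}$. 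To keep the Sobolev order on the right at the sharp value $d/2-\al$, these dispersive bounds would then be interpolated against the no-decay Bernstein estimate $\|P_N u\|_{L^\I}\lec \LR{N}^{d/2}\|P_N u\|_{L^2}$ by taking a geometric mean at each frequency before summing.

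The main obstacle is precisely this balance: a naive combination of dispersive decay and spatial localization wastes half a derivative relative to the target, and the sharp exponent is recovered only through the interpolation together with a switch to the massive Klein--Gordon decay $|t|^{-d/2}$ at the transitional frequency $N\sim 1/R$, below which the localization ceases to be informative. A cleaner alternative, kept in reserve, is a direct stationary-phase analysis of the oscillatory-integral representation of $e^{it\LR{\na}}[\chi(\cdot/R)f]$---exploiting that $\chi\in\S$---which replaces the dyadic bookkeeping by a single parametrix computation and produces the factor $\LR{t/R}^{(1-d)/2}$ as the Jacobian of the group-velocity map $\y\mapsto \y/\LR{\y}$.
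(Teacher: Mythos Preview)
Your short-time argument is fine and matches the paper. The long-time argument, however, has a genuine gap: the interpolation you propose cannot close the half-power loss you yourself flag.

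Concretely, for high frequencies $N\gg 1$ and $|t|>R$, your dispersive-plus-localization bound reads
\[
 \|e^{it\LR{\na}}P_N g\|_{L^\I}\lec |t|^{-(d-1)/2}N^{(d+1)/2}\,R^{d/2}\|P_N g\|_{L^2},
\]
with $g=\chi(\cdot/R)f$. Compared with the target $(|t|/R)^{-(d-1)/2}N^{d/2}\|P_N g\|_{L^2}$, this is too large by the factor $(RN)^{1/2}$. The Bernstein endpoint is too large by $(|t|/R)^{(d-1)/2}$. Any geometric mean of the two exceeds the target by $\min\bigl((RN)^{1/2},(|t|/R)^{(d-1)/2}\bigr)$, and both factors are $\ge 1$ throughout the regime $|t|>R\ge 1$, $N\ge 1$. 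Optimizing $\theta=\theta(N)$ does not help; the minimum of $a^\theta b^{1-\theta}$ over $\theta\in[0,1]$ is $\min(a,b)$. Switching to the massive decay $|t|^{-d/2}$ only helps at $N\lec 1$ and does nothing for the problematic high frequencies. The root cause is that the isotropic Cauchy--Schwarz step $\|P_N g\|_{L^1}\lec R^{d/2}\|P_N g\|_{L^2}$ pays for $d$ directions of localization, whereas the wave-type decay only reimburses $d-1$ of them.

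What the paper does is essentially your ``alternative kept in reserve,'' but with a specific mechanism that your sketch does not supply. After handling low frequencies by the massive $L^{2d}$ decay, the paper rescales $x\mapsto Rx$ so that multiplication by $\chi(\cdot/R)$ becomes convolution with $\ti\chi\in\S$ in frequency. On each dyadic shell $|\x|\sim J$ the phase is analyzed only through its \emph{angular} gradient $|\na\Om|\ge t\th$, yielding $\int_{|\x|\sim J}\LR{t\th}^{-N}|\hat\chi(\x-\y)|\,d\x$ (or the variant with $\th_0=(tJ)^{-1/2}$ when $J\ll t$). The crucial step is an \emph{anisotropic} Young inequality: $L^1_\x L^2_\y$ in the $d-1$ angular directions and $L^2_\x L^1_\y$ in the single radial direction. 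This is exactly what converts the angular decay into $(t/R)^{(1-d)/2}$ while keeping the radial direction at the $L^2$ level, producing the sharp Sobolev exponent $d/2-\al$ without any loss. Your black-box use of $\|\cdot\|_{L^1}$ collapses this anisotropy and is why the bookkeeping cannot be balanced.
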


\begin{proof}
First note that the case $|t|<R$ is trivial by the Sobolev embedding $H^{d/2-\al}\subset B^{-\al}_{\I,2}$.
Hence we assume that $t\gg R>1$.

Next we dispose of the lower frequency. Denote by $P_{\le 1}$ a fixed smooth cut-off in the Fourier space onto $|\x|\le 1$.
We define $P_{\le R}$ for any $R>0$ by rescaling $P_{\le 1}$, and also $P_{>R}:=1-P_{\le R}$. Denote $\chi_R(x)=\chi(x/R)$.
By using the $L^{2d}$ decay for the free Klein-Gordon, and the Sobolev embedding $B^{1/2}_{2d,2}\subset B^0_{\I,2}$,
we have for some large $N\in\N$,
\EQ{
 \|P_{\le 1}e^{it\LR{\na}}\LR{\na}^{-\al}\chi_Rf\|_{B^0_{\I,2}}
 \pt\lec \|e^{it\LR{\na}}\chi_Rf\|_{B^{-N}_{2d,2}}
 \pr\lec t^{(1-d)/2}\|\chi_Rf\|_{L^{2d/(2d-1)}} \lec (t/R)^{(1-d)/2}\|f\|_{L^2}.}
Similarly, we have
\EQ{
 \|e^{it\LR{\na}}\LR{\na}^{-\al}\chi_RP_{\le 1}f\|_{B^0_{\I,2}}
 \pt\lec t^{(1-d)/2}\|\chi_RP_{\le 1}f\|_{B^N_{2d/(2d-1),2}}
 \pr\lec t^{(1-d)/2}\|f\|_{W^{N+1,2d/(2d-1)}} \lec (t/R)^{(1-d)/2}\|f\|_{L^2}.}

Hence it suffices to show
\EQ{
 \|P_{>1}e^{it\LR{\na}}\chi_RP_{>1}f\|_{\dot B^{-\al}_{\I,2}} \lec (t/R)^{(1-d)/2}\|f\|_{\dot H^{d/2-\al}},}
for $t\gg R>1$.
Taking advantage of the homogeneity, we can
rescale by $(t,x)\mapsto(Rt,Rx)$, then the above
is reduced to
\EQ{
 \|P_{>R}e^{it\LR{\na}_R}\chi P_{>R}f\|_{\dot B^{-\al}_{\I,2}} \lec t^{(1-d)/2}\|f\|_{\dot H^{d/2-\al}},}
for $t\gg 1$ and $R>1$, where we denote $\LR{\na}_R:=\sqrt{R^2-\Delta}$.

We decompose the left hand side by the dyadic decomposition, and consider the dyadic piece $|\x|\sim J>R>1$.
We need to estimate the $L^\I_x$ norm of
\EQ{
 I_J := \iint e^{it\sqrt{|\x|^2+R^2}+ix\x}\fy(\x/J)
 \ti\chi(\x-\y)\ti f(\y)d\y d\x,}
where $\fy(\x)$ is the cut-off for the Littlewood-Paley decomposition, satisfying
\EQ{
 \supp\fy\subset\{1/2<|\x|<2\},\pq \sum_{J\in 2^\Z}\fy(\x/J)=1, \pq(\x\not=0).}

Now we argue by the stationary phase estimate.
Let $\Om=t\sqrt{|\x|^2+R^2}+\x x=:t\LR{\x}_R+\x x$, then $\na\Om=t\x/\LR{\x}_R+x$, $|\na^{1+N}\Om|\lec\LR{\x}^{-N}t$.
For the size of $|\na\Om|$, we take account only of its angular component, i.e.
\EQ{
 \th:=|\x/\LR{\x}\times x/|x||,\pq |\na\Om|\ge t\th.}

Let $g\in C^\I_0(\R^d)$ and $\R^d\ni p\not=0$, and assume that $\p_p\Om\not=0$ on $\supp g$. Then we have by the partial integration,
\EQ{
 \int e^{i\Om}g(\x) d\x = \int e^{i\Om}(i\p_p(\p_p\Om)^{-1})^N g(\x) d\x,}
for any $N\in\N$, where both $\p_p$ and $(\p_p\Om)^{-1}$ in the parenthesis should be regraded as operators.
Thus the $N$-th power of the operator is expanded into a linear combination of terms like
\EQ{
 e^{i\Om} \frac{\p_p^{\be_1+1}\Om\cdots \p_p^{\be_K+1}\Om}{(\p_p\Om)^\al}\p_p^\ga,\pq \al=N+K,\pq \be_1+\cdots+\be_K+\ga=N,}
and its integral is bounded by
\EQ{
 \int (t\th)^{-N}(J\th)^{-K}J^{K+\ga-N}|\p_p^\ga g| d\x
  \lec \int (t\th)^{-N}(J\th)^{\ga-N}|\p_p^\ga g| d\x.}

For the part $J\gec t$, we can decompose $\fy(\x/J)\ti\chi(\x-\y)$ into the regions $t\th\lec 1$ and $t\th\gec 1$, by a smooth cut-off of the form $h(t\th)$. Then $|\p_p^\ga h(t\th)|\lec (t/J)^\ga \lec 1$, hence by choosing $N=0$ for $t\th\lec 1$ and sufficiently large $N$ for $t\th\gec 1$, we get
\EQ{
 \left|\int e^{i\Om}\fy(\x/J)\ti\chi(\x-\y)d\x\right| \lec
 \int_{|\x|\sim J} \LR{t\th}^{-N}|\hat\chi(\x-\y)| d\x,}
for some $\hat\chi\in\S$ given by derivatives of $\ti\chi$. Applying the Young on the right hand side ($L^2$ for the first term and $L^1$ for the second), we get
\EQ{
 |I_J| \pt\lec \sum_{l\in\N} \Bigl\|\int_{|\x|\sim 2^j}\LR{t\th}^{-N}|\hat\chi(\x-\y)|d\x\Bigr\|_{L^2_\y(|\y|\sim 2^l)} \|\ti f\|_{L^2_\y(|\y|\sim 2^l)}
 \pr\lec (2^j/t)^{(d-1)/2}2^{j/2} \sum_{l\in\N} 2^{-N|j-l|} \|\ti f\|_{L^2_\y(|\y|\sim 2^l)},}
where we put $J=2^j$, and the two $N$'s are unrelated arbitrary large numbers. Hence by applying the Young for the convolution on $\Z$, we get
\EQ{
 \|J^{-\al}I_J\|_{\ell^2_{J\gec t}L^\I_x} \lec t^{(1-d)/2}\|f\|_{\dot H^{d/2-\al}}.}

If $J\ll t$, then we apply the cubic decomposition of size $J\th_0$ to the $\x$ integral with $\th_0=(tJ)^{-1/2}$, and we use the above partial integration in those cubes with $\th>\th_0$. Since the $\p^\ga$ derivative on the cubic cut-off is bounded by $(J\th_0)^{-\ga}$, we thus get
\EQ{
 |\int e^{i\Om}\fy(\x/J)\ti\chi(\x-\y)d\x|
  \lec \int_{|\x|\sim J} \LR{\th/\th_0}^{-N}|\hat\chi(\x-\y)| d\x,}
where $\hat\chi\in\S$ is given by derivatives of $\ti\chi$.
Now we apply the Young to the right hand side, but with different exponents for different variables.
In the $d-1$ dimensions perpendicular to $x$, we apply $L^1$ and $L^2$ to the first and the second funcitons, respectively.
In the remaining $x$ direction, we apply $L^2$ and $L^1$ to them.
Then we get with $J=2^j\ll t$,
\EQ{
 |I_J| \lec \sum_{l\in\Z} (J\th_0)^{d-1}J^{1/2} 2^{-N|j-l|}\|\ti f\|_{L^2(||\y|\sim 2^l)},}
for arbitrary $N\in\N$. By $\th_0=(tJ)^{-1/2}$, the coefficient equals $t^{(1-d)/2}J^{d/2}2^{-N|j-l|}$.
Hence we get the desired bound as the previous case, by the Young inequality on $\Z$.
\end{proof}


\section{Strichartz estimates in the scattering}

In the following lemma, the global energy bound $E(u)\le 1$ is irrelevant. In fact, we have the same conclusion for arbitrary finite $E(u)$, where the estimates depend also on $E(u)$.
\begin{lem} \label{bump}
Let $u$ be a solution of \eqref{NLKG} on an interval $I$ satisfying
$E(u)\le 1$, $\|u\|_{\HL(I)}\le A<1$ and $\|u\|_{X(I)}=\y<\I$.
There exists a constant $\y_0(A)\in(0,1)$ such that if $\y\le \y_0(A)$ then
we have \EQ{
 \|u\|_{K(I)} \lec 1.}
Moreover, there exist $J\subset I$, $R=R(\y)\sim\y^{-3}$ and $c\in\R^2$ such that $|J|\gec\y^8$ and
\EQ{
 J\ni\forall t,\quad \int_{|x-c|<R} |u(t,x)|^p dx \ge C_p \y^{16}}
for any $p\in[1,8]$.
\end{lem}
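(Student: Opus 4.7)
My plan has two parts: a bootstrap argument for the Strichartz bound, followed by extraction of a space-time bump from frequency-band concentration.

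For $\|u\|_{K(I)}\lec 1$, I would use a continuity bootstrap. On any subinterval $J\subset I$, the Strichartz estimate \eqref{Stz} together with the energy bound gives $\|u\|_{K(J)}\lec 1+\|f(u)\|_{L^1(J;L^2)}$. The hypothesis $\|u\|_{\HL(I)}\le A<1$, Corollary \ref{nst-crt}, and the interpolation \eqref{intp1} (using $B^{3/8}_{8,2}=[B^{1/4}_{\I,2},B^{1/2}_{4,2}]_{1/2}$) then yield
$$
\|u\|_{K(J)} \;\lec\; 1 + C(A)\,\y^{8\de}\,\|u\|_{K(J)}^{4(1-\de)}.
$$
Since the exponent $4(1-\de)>1$, choosing $\y_0(A)$ small enough closes the bootstrap at a fixed threshold $M=M(A)$ and propagates the bound from short subintervals to all of $I$.

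For the bump, the same interpolation gives $\y=\|u\|_X\lec\|u\|_B^{1/2}\|u\|_K^{1/2}$, so $\|u\|_B\gec\y^2$. Thus there exist $t^*\in I$, a dyadic frequency $N$, and $x^*\in\R^2$ with
$$
|P_Nu(t^*,x^*)|\;\gec\; N^{1/4}\y^2.
$$
Bernstein combined with the energy bound ($\|P_Nu\|_{L^\I}\lec\min(N,1)\|u\|_{H^1}\lec\min(N,1)$) restricts $N$ to a bounded dyadic range; the balanced scale $N\sim\y^{-8}$ produces a bubble of height $\mu\sim 1$ on a spatial scale $r\sim 1/N\sim\y^8$, for which $\mu^p r^2\sim\y^{16}$ holds uniformly in $p\in[1,8]$. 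Bernstein further yields $\|\na P_Nu\|_{L^\I}\lec N$ and $\|\p_tP_Nu\|_{L^\I}\lec N\|\p_tu\|_{L^2}\lec N$, so the lower bound $|P_Nu(t,x)|\gec\mu/2$ persists on a spatial ball of radius $r$ and a time interval of length $r$ around $(t^*,x^*)$, with the nonlinear Duhamel contribution controlled by Corollary \ref{nst-crt} using the smallness of $\y$.

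The hard part is converting the pointwise lower bound on $|P_Nu|$ into one on $|u|$, integrated over a ball $B(c,R)$ with $R\sim\y^{-3}$, uniformly in $p\in[1,8]$. I would write $u=P_Nu+P_{\ne N}u$ and bound the tail: low-frequency contributions are small by Bernstein and the energy, while high-frequency contributions restricted to $B(c,R)$ are controlled via the sharp $L^\I$-$L^2$ localized decay of Lemma \ref{LIL2 decay}. The calibration $R\sim\y^{-3}$ and $|J|\sim\y^8$ is chosen precisely so these tail terms are beaten by the bubble mass $\y^{16}$ for all $p\in[1,8]$ simultaneously, completing the proof.
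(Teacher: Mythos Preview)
Your bootstrap for $\|u\|_{K(I)}\lec 1$ is essentially the paper's argument, and the first step of the bump extraction---getting $\|u\|_{B(I)}\gec\y^2$ and hence a point $(T,c)$ and a dyadic $N\ge 1$ with $|u_N(T,c)|\gec N^{1/4}\y^2$---also matches. The time persistence via $\|u_N(T)-u_N(t)\|_{L^\I}\lec N\|u(T)-u(t)\|_{L^2}\lec N|T-t|$ is again the paper's, although you overstate it: the Sobolev/Bernstein bound only gives $1\le N\lec\y^{-8}$, not $N\sim\y^{-8}$, so your ``bubble of height $\mu\sim 1$ at scale $r\sim\y^8$'' picture need not hold.

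The genuine gap is in your final step. You propose to write $u=P_Nu+P_{\neq N}u$ and show the tail is pointwise small on $B(c,R)$, invoking Bernstein for low frequencies and Lemma~\ref{LIL2 decay} for high frequencies. Neither works: there is no reason $|P_{<N}u|$ or $|P_{>N}u|$ should be pointwise small (nothing prevents $u$ from having large contributions at other scales that cancel $P_Nu$), and Lemma~\ref{LIL2 decay} is a \emph{time-decay} estimate for free Klein--Gordon evolution of spatially localized data---it says nothing about frequency tails of $u$ at a fixed time. You cannot obtain a pointwise lower bound on $|u|$ this way.

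The paper avoids this entirely by a direct duality argument. Writing the Littlewood--Paley projection as a convolution $u_N(t,c)=(N^2\fy(N\cdot)*u(t))(c)$ with $\fy$ Schwartz, split the integral at $|x-c|=R$: H\"older on the near part gives $\lec N^{2/p}\|u(t)\|_{L^p(|x-c|<R)}$, and Schwartz decay of $\fy$ plus Cauchy--Schwarz on the far part gives $\lec N^{1/4}R^{-3/4}\|u(t)\|_{L^2}$. Choosing $R\gg\y^{-8/3}$ (hence $R\sim\y^{-3}$) makes the far piece negligible compared to $N^{1/4}\y^2$, yielding $\|u(t)\|_{L^p(|x-c|<R)}\gec N^{1/4-2/p}\y^2\gec\y^{16/p}$ for $p\le 8$ (using $N\lec\y^{-8}$). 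No pointwise control of $|u|$ is needed, and Lemma~\ref{LIL2 decay} plays no role here.
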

\begin{proof}
Let $I=(S,T)$ and let $v$ be a free solution with the same initial data as $u$ at
$t=S$. By the Strichartz estimate \eqref{Stz} and the interpolation
inequalities \eqref{intp1}, we have \EQ{
 \|u-v\|_{K\cap X(I)}
 \le C\|f(u)\|_{L^1(I;L^2)}
 &\le C(A)\|u\|_{K(I)}^{4(1-\de)}\|u\|_{X(I)}^{8\de},}
where $\de\in(0,1/8)$ is given in Lemma \ref{crit-nlst} depending on $A$. Hence we have
\EQ{
 \|u\|_{K(I)} \lec 1 + C(A)\y^{4\de}\|u\|_{K(I)}^{4(1-\de)}.}
Since $4(1-\de)>1$ and $\|u\|_{K(I)}\to 0$ as $T\to S+0$, we obtain the desired {\it a priori} bound if $\y\le \y_0(A)$ is sufficiently small.
Then \eqref{intp1} implies
\EQ{
 \|u\|_{B(I)} \gec \|u\|_{X(I)}^2 \|u\|_{K(I)}^{-1} \gec \y^{2}.}
Hence there exists $T\in I$, $c\in\R^2$, and $N\ge 1$ such that
\EQ{
  \y^{2} \lec N^{-1/4}|u_{N}(T,c)|,}
where $u_N$ stands for the dyadic piece of $u$. The Sobolev embedding bounds the RHS by $N^{-1/4}$, and also $\y=\|u\|_{X(I)}\lec |I|^{1/8}$. Hence we have $N+1/|I|\lec \y^{-8}$. Moreover, by the $L^2$ bound on $\dot u$, we have
\EQ{
 \|u_{N}(T)-u_{N}(t)\|_{L^\I} \lec N \|u(T)-u(t)\|_{L^2} \lec N|T-t|,}
and so there exists $J\subset I$ such that $|J|\gec\y^8$ and
\EQ{
 \y^2\lec N^{-1/4}\inf_{t\in J}|u_{N}(t,c)|.}
Writing the frequency localization by using some Schwarz function $\fy$, we have
\EQ{
 N^{1/4}\y^2 \lec |u_{N}(t,c)| &=|(N^2\fy(Nx)*u)(t,c)|\\
 &\lec N^{2/p} \|u(t)\|_{L^p(|x-c|<R)} + N^{1/4}R^{-3/4}\|u(t)\|_{L^2},}
where we used the H\"older inequality. Since $N\lec\y^{-8}$ and $\|u(t)\|_{L^2}\le 1$, if $R\gg\y^{-8/3}$ and $p\le 8$ then we have
\EQ{
 \|u(t)\|_{L^p(|x-c|<R)} \gec N^{1/4-2/p}\y^2 \gec \y^{16/p}.}
\end{proof}

\begin{lem}
\label{long}
Let $u$ be a solution of \eqref{NLKG} on an interval $I=(S,T)$ with $E(u)\le 1$ and $\|u\|_{\HL(I)}\le A<1$. Let $S=\t_0<\t_1<\cdots<\t_N=T$ such that $\|u\|_{X(\t_j,\t_{j+1})}\ge\y>0$ for all $j$. Then we have
\EQ{
 \sum_{j=1}^N \frac{\min(\y,\y_0(A))^{40}}{\t_j-\t_0+\y^{-3}} \lec 1.}
\end{lem}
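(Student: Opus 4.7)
The strategy is to combine the concentration information from Lemma~\ref{bump} with the generalized Morawetz estimate~\eqref{ME} applied to the time-translated solution $u(\cdot+\t_0,\cdot)$, and to sum the Morawetz contributions over disjoint time-windows.

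First I would reduce, without loss of generality, to the case $\y\le\y_0(A)$: otherwise replace $\y$ by $\y_0(A)$ throughout, using that $\|u\|_{X(\t_{j-1},\t_j)}\ge\y$ trivially implies $\|u\|_X\ge\min(\y,\y_0(A))$ and that the sum is monotone in $\y$. Then, on each sub-interval $(\t_{j-1},\t_j)$, Lemma~\ref{bump} produces a further sub-interval $J_j\subset(\t_{j-1},\t_j)$ with $|J_j|\gec\y^8$, a point $c_j\in\R^2$, and the fixed radius $R\sim\y^{-3}$ such that $\int_{|x-c_j|<R}|u(t,x)|^6\,dx\ge C\y^{16}$ for every $t\in J_j$. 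A direct Taylor expansion gives $\mathcal{G}(u) = \frac{1}{4\pi}\sum_{m\ge 3}\frac{(m-1)(4\pi)^m}{m!}|u|^{2m}\gec|u|^6$ pointwise, and all higher-order coefficients are positive.

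The main step is to apply the Morawetz estimate~\eqref{ME} to the autonomously equivalent solution $u(\cdot+\t_0,\cdot)$, yielding
\[
\iint_{\R^{1+2}}\frac{(1+(t-\t_0)^2)\,\mathcal{G}(u)}{1+|t-\t_0|^3+|x|^3}\,dx\,dt\le C.
\]
Since the regions $J_j\times B(c_j,R)$ are disjoint in time, the sum over $j$ of the Morawetz integrand over these regions is controlled by this single bound. The task is then to show that the $j$-th contribution is bounded below by $c\cdot\y^{40}/(\t_j-\t_0+\y^{-3})$. By finite speed of propagation for~\eqref{NLKG}, together with the fact that the lower bound $\int_{B(c_j,R)}|u|^6\gec\y^{16}$ implies a corresponding $H^1$-concentration via 2D Sobolev, we can localize $c_j$ to $|c_j|\lec\t_j-\t_0+\y^{-3}$ up to a constant depending on $u_0$. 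Thus the denominator of the Morawetz weight is $\lec(\t_j-\t_0+\y^{-3})^3$, while the numerator $(1+(t-\t_0)^2)$ integrated over $J_j$ (of width $\gec\y^8$) contributes $\y^8\max(1,s_{j-1}^2)$ with $s_{j-1}=\t_{j-1}-\t_0$. Together with $\int_{B(c_j,R)}\mathcal{G}(u)\gec\y^{16}$, these combine to give the required $\y^{40}/(\t_j-\t_0+\y^{-3})$.

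Summing over $j$ then yields $\sum_j\y^{40}/(\t_j-\t_0+\y^{-3})\lec 1$. The hardest part is the weight bound in the main step: the 2D generalized Morawetz weight $(1+t^2)/(1+|t|^3+|x|^3)$ has qualitatively different behavior in the ``early'' regime $\t_j-\t_0\lec\y^{-3}$ (where the denominator is dominated by $R^3=\y^{-9}$) versus the ``late'' regime $\t_j-\t_0\gg\y^{-3}$ (where the $1/t$ tail dominates), so pinning down the sharp powers $\y^{40}$ and $(\t_j-\t_0+\y^{-3})^{-1}$ requires case analysis together with the lower bound $s_j-s_{j-1}\ge|J_j|\gec\y^8$ to regulate the time-spacing, and special treatment of $j=1$ where $s_0=0$ and only the pure time-width $|J_1|\gec\y^8$ is available in the numerator.
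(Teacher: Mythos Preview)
Your proposal has a genuine gap at the localization step for the concentration centers $c_j$. You write that ``by finite speed of propagation\ldots\ we can localize $c_j$ to $|c_j|\lec\t_j-\t_0+\y^{-3}$ up to a constant depending on $u_0$.'' A constant depending on the particular solution is not admissible: the lemma must hold with implied constant depending only on $A$, since in its application (the proof that $E^*=1$) it is invoked on a family of solutions chosen after an arbitrarily large $n$. More fundamentally, no such localization exists. The initial data need not be compactly supported nor have any prescribed spatial decay, so the centers $c_j$ can lie arbitrarily far from any fixed spatial origin, independently of $\t_j-\t_0$. Finite propagation only says the energy in $B(c_j,R)$ at time $t_j$ came from $B(c_j,R+t_j-\t_0)$ at time $\t_0$; it says nothing about $|c_j|$. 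Because of this, a single application of Morawetz centered at $(\t_0,0)$ cannot see all the bumps with the correct weight.

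The paper handles exactly this difficulty by \emph{not} centering Morawetz at a single space-time point. It introduces a partial relation $j\preceq k\iff j\le k$ and $|c_k-c_j|\le|t_k-t_j|+2R$, extracts a set $M$ of $\preceq$-minimal indices, and applies a \emph{separate} translation of the Morawetz estimate centered at $(t_k,c_k)$ for each $k\in M$. For $j\succeq k$ the relation guarantees $|c_j-c_k|\le|t_j-t_k|+2R$, which is precisely the light-cone containment your argument lacks; this yields $\sum_{j\succeq k}\y^{24}/(|t_j-t_k|+\y^{-3})\lec 1$ for each $k$. Summing over $k\in M$ costs a factor $\#M$, but minimality forces the backward cones from the $(t_k,B(c_k,R))$ to be pairwise disjoint, so $\sum_{k\in M}\int_{B(c_k,R)}e(u(t_k))\le E(u)$ gives $\#M\lec\y^{-16}$. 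This minimal-element decomposition is the missing idea; without it or an equivalent substitute, the argument cannot close.
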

\begin{proof}
\newcommand{\Le}{\preceq}
\newcommand{\Ge}{\succeq}
By subdividing each interval $(\t_j,\t_{j+1})$ if necessary, we may assume $\y\le\y_0(A)$ without loss of generality.
By Lemma \ref{bump}, there exists $R\sim \y^{-3}$, $t_j\in(\t_{j-1},\t_{j})$ and $c_j\in\R^2$ such that
\EQ{
 \int_{|x-c_j|<R} |u(t_j,x)|^2 dx \gec \y^{16}.}
Define the following relation on the set of all indices
\EQ{
 j\Le k \quad \iff \quad  [ j\le k \text{ and } |c_k-c_j| \le |t_k-t_j| + 2 R.]}
Then we can inductively find a unique $M\subset\{1,\dots,N\}$ such that
\begin{enumerate}
\item Every $k\in M$ is minimal, i.e. there is no $j\Le k$.
\item Each $j$ has a $k\in M$ satisfying $k\Le j$.
\end{enumerate}
By the first property and the finite propagation of energy, we have
\EQ{ \label{numbound}
 1\ge E(u) \ge \sum_{k\in M} \int_{|x-c_k|<R} e(u(t_k,x)) dx \gec \y^{16} (\#M) ,}
where $\#M$ denotes the number of the minimal elements, and by the generalized Morawetz inequality we have for each $k\in M$,
\EQ{
 1\ge E(u) \gec \int \frac{|u|^6}{|t-t_k|+|x-c_k|+1} dxdt
 \gec \sum_{j\Ge k} \frac{\y^{16}\y^8}{|t_j-t_k|+\y^{-3}}.}
Summing this for $k\in M$, and using the second property of $M$, we obtain
\EQ{
 \#M \gec \sum_{k\in M} \sum_{j\Ge k} \frac{\y^{24}}{|t_j-t_k|+\y^{-3}}
 \gec \sum_{j=1}^N \frac{\y^{24}}{\t_j-\t_0+\y^{-3}},}
which is bounded by $\y^{-16}$ due to \eqref{numbound}.
\end{proof}

\begin{lem}[Space time localized energy]
\label{STLE}
There are functions $L:(0,1)^2\to(1,\I)$ and $\nu_0:(0,1)\to(0,1/2)$ with the following property.
Let $T_1<T_2$ and let $u$ be a solution of \eqref{NLKG} on $(T_1,T_2)$ satisfying $E(u)\le 1$,
\EQ{
 &\|u\|_{H_{loc}(T_1,T_2)}\leq A, \quad \|u\|_{X(T_1,T_2)}\le 2\y,\\
 &\int_{|x-c|<R}e_N(u,T_1) dx \ge \nu^2,\quad |T_1-T_2|\ge L(\nu,\ka)R,}
for some $A\in(0,1)$, $\ka\in(0,1)$, $\y\in (0,\y_0(A)]$, $\nu\in(0,\nu_0(A))$, $c\in\R^2$ and $R\ge 1$.
Then there exists a free solution $v$ and $S\in(T_1,T_2)$ such that
\EQ{
 &E(u-v;S) \le E(u)- \nu^2/2,
 \quad E_0(v)\le 2\nu^2,\\
 & \|v\|_{L^\I B^0_{\I,2}(S,\I)} + \|v\|_{X(S,\I)} \le \ka.}
\end{lem}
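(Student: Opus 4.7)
The plan is to extract from $u$ at time $T_1$ a ``bubble'' localized near $c$ at spatial scale $\sim R$, define $v$ as its free Klein-Gordon evolution, and verify the three required conclusions. By a continuity/pigeonhole argument on $r\mapsto \int_{|x-c|<r} e_N(u,T_1)\,dx$, I replace $R$ by a smaller radius $R'\le R$ at which the nonlinear energy in $B_{2R'}(c)$ lies in $[\nu^2,2\nu^2]$ (this only strengthens the hypothesis $|T_2-T_1|\ge L(\nu,\ka)R\ge L(\nu,\ka)R'$). I then pick a smooth radial cutoff $\chi$ with $\chi\equiv 1$ on $B_{R'}(c)$, supported in $B_{2R'}(c)$, and $|\nabla\chi|\lec 1/R'$, and define $v$ to be the free Klein-Gordon evolution of $(\chi u(T_1), \chi\dot u(T_1))$.

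The two initial energy bounds then follow by direct expansion. Using $F\ge 0$ (which gives $e_L\le e_N$) together with the Leibniz identity \eqref{H1 cutoff}, I obtain
\[E_0(v)\le \int_{B_{2R'}} e_L(u,T_1)\,dx + O\bigl((R')^{-2}\|u\|_{L^2(\mathrm{supp}\,\nabla\chi)}^2\bigr)\le 2\nu^2,\]
once the cutoff-induced error is absorbed (see the obstacle below). Symmetrically, expanding $E(u,T_1)-E(u-v,T_1)$ and exploiting the pointwise monotonicity $F((1-\chi)|u|)\le F(|u|)$ produces a main contribution $\ge \int_{B_{R'}} e_N(u,T_1)\,dx\ge \nu^2$ modulo boundary errors, so that $E(u,T_1)-E(u-v,T_1)\ge 3\nu^2/4$.

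Next, since $v$ is free, $u-v$ satisfies $(\p_t^2-\De+1)(u-v)=-f(u)$ and thus
\[\frac{d}{dt}E(u-v,t)=-2\int(\dot u-\dot v)\bigl(f(u)-f(u-v)\bigr)\,dx.\]
I pair this in $L^\I_t L^2_x$ against $L^1_t L^2_x$ on $(T_1,S)$: the first factor is $\lec \sqrt{E(u)}+\sqrt{E_0(v)}\lec 1$, while the second, by Corollary \ref{nst-crt} (which applies because $\|u\|_{H^1[6]}\le A$ gives $R_A[u]\ge 6$), is controlled by $C_A\|v\|_{Y_1(T_1,S)}\|(u,u-v)\|_{Y_2(T_1,S)}^4$. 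Lemma \ref{bump} and Strichartz give $\|u\|_{Y_2},\|u-v\|_{Y_2}\lec 1$, and $\|v\|_{Y_1(T_1,S)}$ is controlled by interpolating the dispersive decay $\|v(t)\|_{L^\I}\lec \nu\LR{(t-T_1)/R'}^{-1/2}$ (Lemma \ref{LIL2 decay}) against $\|v\|_{L^\I L^2}\lec \nu$, producing a profile whose time-integral is bounded independently of $S$. The drift is then $\le \nu^2/4$ once $\nu_0(A)$ and $L(\nu,\ka)$ are chosen appropriately, yielding $E(u-v,S)\le E(u)-\nu^2/2$. For the dispersion of $v$, Lemma \ref{LIL2 decay} with $d=2$, $\al=1$ gives $\|v(t)\|_{B^0_{\I,2}}\lec \nu\LR{(t-T_1)/R'}^{-1/2}$; interpolating $L^{16}$ between $L^\I$ and $L^8$ and using $\|v\|_{L^4 L^8}\lec \nu$ (Strichartz) gives $\|v\|_{X(S,\I)}\lec \nu\LR{(S-T_1)/R'}^{-1/4}$. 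Choosing $S-T_1\sim(\nu/\ka)^4 R'$ puts both norms below $\ka$, so $L(\nu,\ka)\sim(\nu/\ka)^4$ (up to adjustments from the drift) is admissible.

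The main obstacle I anticipate is the drift step: pairing naively against the global Strichartz bound $\|v\|_{Y_1}\lec \nu$ yields only an $O(\nu)$ drift, which would exceed the allowed $O(\nu^2)$ budget. The resolution relies on actually harvesting the pointwise decay of the localized free wave $v$ into the $Y_1$ factor, together with a careful tuning of $L(\nu,\ka)$ so that the decay rate makes the time-integral of the drift converge below $\nu^2/4$. A secondary technical point is ensuring the cutoff error in the initial energy bound is subleading; if the pigeonhole radius $R'$ turns out too small, either $\nu_0(A)$ should be further restricted or the construction should be refined (e.g.\ by an additional frequency cutoff at scale $1/R'$) so that the boundary contribution remains under control.
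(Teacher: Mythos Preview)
Your anticipated obstacle is real and your proposed resolution does not close it. The drift term is bounded by
\[
C_A\,\|v\|_{Y_1(T_1,S)}\,\|(u,u-v)\|_{Y_2(T_1,S)}^4,
\]
and on the full interval $(T_1,S)$ you only have $\|(u,u-v)\|_{Y_2}\lec 1$ from Lemma~\ref{bump}. Harvesting the decay of $v$ into $Y_1=L^{1/\de}_tL^{2/\de}_x$ does not help: interpolating $\|v(t)\|_{L^\infty}\lec\nu\LR{(t-T_1)/R'}^{-1/2}$ against the \emph{conserved} $L^2$ norm gives $\|v(t)\|_{L^{2/\de}}\lec\nu\LR{(t-T_1)/R'}^{-(1-\de)/2}$, and after the $L^{1/\de}_t$ integration you get $\|v\|_{Y_1(T_1,\infty)}\lec\nu\,(R')^{\de}$, which is at best $O(\nu)$, never $O(\nu^2)$. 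So the drift is $C_A\,\nu$, and the requirement $C_A\,\nu\le\nu^2/4$ is equivalent to $\nu\ge 4C_A$, which fails for $\nu<\nu_0(A)$. No tuning of $L(\nu,\ka)$ or $\nu_0(A)$ rescues this.

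The paper's proof avoids this by \emph{not} cutting off at $T_1$. Instead, using a pigeonhole on the geometric sequence $t_j=(3+3M)^jR$ together with the bound $\|u\|_{K(T_1,T_2)}\lec 1$ and the weighted Morawetz estimate $\int\|\LR{x}^{-1}u(t)\|_{L^2}^6\,dt/(1+|t|)\lec 1$, one locates a subinterval $(S_0,S)\subset(T_1,T_2)$ on which $\|u\|_{X\cap K(S_0,S)}\le\nu'$ \emph{and} $\|\LR{x}^{-1}u(S_0)\|_{L^2}\le\nu'$, with $\nu'\ll\nu$. The cutoff defining $v$ is then taken at time $S_0$ (at scale $R'=S_0+R$, with an amplitude parameter $a$ adjusted so that the truncated energy equals exactly $\nu^2$). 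This buys two things: (i) on $(S_0,S)$ both $\|u\|_{Y_2}$ and $\|v\|_{Y_2}$ are $\lec\nu^{1-\de}$, so the drift becomes $C_A\,\nu^{4+4\de}\ll\nu^2$; and (ii) the cutoff error in $E_0(v)$ and in $E(u-v;S_0)$ is controlled by $\|\LR{x}^{-1}u(S_0)\|_{L^2}^2\le(\nu')^2$, not by $(R')^{-2}\|u\|_{L^2}^2$ (which is $O(1)$ when $R'\sim 1$). Your ``secondary technical point'' is in fact a second genuine gap, and both are resolved simultaneously by this pigeonhole step.
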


\begin{proof}
We may set $T_1=0$ and $c=0$ by space-time translation.
First we seek a bump of energy with very thin Strichartz norm, to be the initial data of $v$ at a time much before $S$.
By Lemma \ref{bump}, we have $\|u\|_{K(T_1,T_2)}\lec 1$, and by the finite propagation,
$$
\int_{|x|<t+R}e_N(u;t)dx \ge \nu^2.
$$
at any $t\ge 0$. Let $M,\nu'$ be large and small numbers, which will be determined later depending on $\nu$ and $\ka$.
Let $t_j:=(3+3M)^jR$ for $j\in\N$. For any $j\in\N$ and $t_j\le t\le 2t_j$, we have
\EQ{
 t_{j+1}-t \ge t_{j+1}-3t_j = 3Mt_j \ge (t+R)M.}
We claim that if $|T_1-T_2|/R$ is large enough (depending on $M$ and $\nu'$), then for some $j$ with $t_{j+1}<T_2$, we have
\EQ{ \label{small quant}
 \|u\|_{(X\cap K)(t_j,t_{j+1})} + \inf_{t_j<t<2t_j}\|\LR{x}^{-1}u(t)\|_{L^2} \le \nu'.}
Denote by $N_1$ (resp. $N_2$) the number of $j$ for which $t_{j+1}\le T_2$ and the first (resp. the second) term is bigger than $\nu'/2$.
Since $X$ and $K$ consist of $L^8_t$ and $L^4_t$, we have
\EQ{
 \nu' N_1^{1/8} \lec \|u\|_{(X\cap K)(T_1,T_2)} \lec 1.}
On the other hand, by the generalized Morawetz, we have \cite[Lemma 5.3]{2Dsubcrit}
\EQ{
 \int_{\R} \left\|\frac{u(t)}{\LR{x}}\right\|_{L^2}^6 \frac{dt}{1+|t|} \lec 1,}
so we have $(\nu')^{6}(3+3M)^{-1} N_2  \lec 1$.
Therefore we have \eqref{small quant} for some $j$ if $|T_1-T_2|/R\ge (3+3M)^J$ for some $J\gg\ka^{-8}+M(\nu')^{-6}$.
Let $S_0:=t_j$, $S:=t_{j+1}$ and $R':=S_0+R$.

Now we define $v$. Let $\chi\in C_0^\I(\R^2)$ satisfy $0\le\chi\le 1$, $\chi(x)=1$ for $|x|\le 1$ and $\chi(x)=0$ for $|x|\ge 2$.
Let $\chi_a(x):=a\chi(x/R')$. Then by the continuity in $a$, there exists $a\in(0,1]$ such that
\EQ{
 \int_{\R^2} \chi_a^2 e(u;S_0) dx = \nu^2.}
Let $v$ be the free solution satisfying
$$
 (v(S_0,x),\dot v(S_0,x))=\chi_a(x)(u(S_0,x),\dot u(S_0,x)).
$$
Then by using \eqref{H1 cutoff}, we have
\EQ{
 \|\na v(S_0)\|_{L^2}^2
 = \int \chi_a^2|\na u(S_0)|^2 - \chi_a\De\chi_a |u(S_0)|^2 dx,}
and the second term is bounded by
\EQ{
  \|\LR{x}^{-1}u(S_0)\|_{L^2}^2 \le (\nu')^2,}
so we have
\EQ{ \label{v energy}
 E_0(v) = \nu^2 + O((\nu')^2) \le 2\nu^2,}
by choosing $\nu'\ll\nu$.

On the other hand, by the sharp decay estimate \eqref{LIL2 decay}, we have
\EQ{ \label{sup decay est}
 \|v\|_{L^\I B^0_{\I,2}(S,\I)} \lec (R'/|S-S_0|) E_0(v)^{1/2} \lec M^{-1}\nu,}
and by interpolation (cf. \eqref{intp1}),
\EQ{
 \|v\|_{X(S,\I)}
  &\lec \|v\|_{L^\I((S,\I),B^{-1/4}_{\I,\I})}^{1/2}
   \|v\|_{L^4((S,\I),B^{1/4}_{\I,2})}^{1/4}\|v\|_{L^4((S,\I),B^{1/2}_{4,2})}^{1/4}\\
  &\lec (R'/|S-S_0|)^{1/2}E_0(v) \lec M^{-1/2}\nu.}
Hence we get $\|v\|_{L^\I B^0_{\I,2}(S,\I)+X(S,\I)}\le\ka$ by choosing $M\gg\ka^{-2}$.

Now, letting $w=u-v$ and arguing as before we have
\EQ{
 E(w,S_0)&\leq \int(1-\chi^2_a)e_N(u,S_0) + (1-\chi_a)\De\chi_a |u(S_0)|^2\;dx\\
 &\leq E(u)-\nu^2 + C\ka^2.}
Let $2\nu_0(A)<(1-A)/2$, then we can apply our nonlinear estimate to $w$ because
\EQ{
 \|w\|_{H_{loc}(S_0,S_1)}\le \|u\|_{H_{loc}(S_0,S_1)}+\|v\|_{H^1} \le (A+1)/2 < 1.}
By the energy identity together with \eqref{nst-crt}, we have
\EQ{ \label{energy increment}
 E(w,S) &= E(w,S_0) + \int_{S_0}^{S} 2\LR{f(w)-f(u) \mid \dot w}_x dt \\
 &\le E(w,S_0)+2\|f(w)-f(u)\|_{L^1L^2(S_0,S)}\|\dot w\|_{L^\I L^2(S_0,S)}\\
 &\le E(w,S_0)+C(A)\Big(\|u\|_{Y_2(S_0,S)}^{4}+\|v\|_{Y_2}^{4}\Big)(\|u\|_{Y_1(S_0,S)}+\|v\|_{Y_1}).}
By \eqref{small quant} and the Strichartz we have
\EQ{
 \|u\|_{(X\cap K)(S_0,S)} \lec \nu', \quad \|v\|_{X\cap K} \lec E_0(v)^{1/2} \lec \nu.}
Then by the interpolation \eqref{intp1}, we have, using $\nu'\ll\nu$,
\EQ{
 \|u\|_{Y_1(S_0,S)} + \|v\|_{Y_1} \lec \nu^{8\de}, \quad
 \|u\|_{Y_2(S_0,S)} + \|v\|_{Y_2} \lec \nu^{1-\de}.}
Plugging them into \eqref{energy increment}, we obtain
\EQ{
 E(w,S) &\leq E(w,S_0) + C(A)\nu^{4+\de}
  \leq E(u)-\nu^2 + C(\nu')^2 + C(A)\nu^4\\
  &\le E(u) - \nu^2/2,}
by choosing $\nu_0(A)$ sufficiently small.
\end{proof}


\subsection{Perturbation theory}
\begin{lem}
\label{PT}
Let $u$ and $w$ be global solutions to \eqref{NLKG} with $E(u)\le 1$ and $E(w)\le 1$,
and let $v$ be the free solution with the same initial data as $u-w$ at some $t=S$.
For any $M\in(0,\infty)$ and $A\in(0,1)$, there exists
$\kappa(A,M)\in(0,1)$ and $C(A,M)\in(0,\I)$ such that if
\EQ{ \label{v smallness}
 \pt \|w\|_{H_{loc}(S,\I)} \le A,
 \pq \|v\|_{L^\I B^0_{\I,2}(S,\I)} \le \ups(A)/2,
 \pr \|w\|_{X(S,\I)}\le M,
 \pq \|v\|_{X(S,\I)}\le\ka(A,M),}
then we have $\|u\|_{X(S,\I)}\le C(A,M)$.
The above $\ups$ is as given in Lemma \ref{critical estimate}.
\end{lem}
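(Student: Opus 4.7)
The plan is to write $r := u - w - v$, so that $r(S) = \dot r(S) = 0$ and $r$ solves the inhomogeneous Klein-Gordon equation
\[
\ddot r - \De r + r = f(w) - f(w+v+r),
\]
and to bound $r$ in the Strichartz norms on $(S,\I)$ by a continuity argument on finitely many subintervals where $w$ has small Strichartz norm. Since $u = w + v + r$, the desired bound on $\|u\|_{X(S,\I)}$ follows from $\|w\|_{X(S,\I)}\le M$, $\|v\|_{X(S,\I)}\le\ka$, and the forthcoming bound on $\|r\|_{X(S,\I)}$.

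By Lemma \ref{bump} applied to $w$, I would partition $(S,\I)$ into $N\lec (M/\y_0(A))^8$ consecutive subintervals $I_j=(\t_j,\t_{j+1})$ on each of which $\|w\|_{X(I_j)}\le \y_0(A)$, and on each such $I_j$ the same lemma yields $\|w\|_{K(I_j)}\le C(A)$. On a fixed $I_j$, I would run a continuity argument in the upper endpoint $t\in I_j$ under the bootstrap assumption
\[
\|r\|_{(X\cap K)(\t_j,t)} + \|r\|_{L^\I((\t_j,t);B^0_{\I,2})} \le \eta(A),
\]
with $\eta(A)\le \ups(A)/2$ sufficiently small. The triangle inequality, combined with the hypothesis $\|v\|_{L^\I B^0_{\I,2}(S,\I)}\le \ups(A)/2$, then yields $\|u-w\|_{L^\I B^0_{\I,2}}\le \ups(A)$, so Corollary \ref{crit nonlin} applies with $(u_0,u_1)=(w,u)$: its local-energy hypotheses hold because $\|w\|_{H^1[6]}\le A$ by assumption and $\|u\|_{H^1[6]}\le \|u\|_{H^1}\le 1$ from $E(u)\le 1$.

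Applying the Strichartz estimate \eqref{Stz} to $r$ (with data at $\t_j$ inherited from the previous subinterval when $j\ge 1$), the nonlinear bound from Corollary \ref{crit nonlin}, and the interpolation identities \eqref{intp1} produces an estimate of the shape
\[
\|r\|_{(X\cap K)(\t_j,t)} \le C_0\|(r,\dot r)(\t_j)\|_{H^1\times L^2} + C_A(1)\bigl(\|v\|_{Y_1}+\|r\|_{Y_1}\bigr)\bigl(\|w\|_{Y_2}+\|v\|_{Y_2}+\|r\|_{Y_2}\bigr)^4.
\]
Since $\|v\|_{Y_1}\lec \|v\|_H^{1-8\de}\|v\|_X^{8\de}\lec \ka^{8\de}$ and similarly for $\|v\|_{Y_2}$, a sufficiently small choice of $\ka(A,M)$ dominates every $v$-contribution, while the $r$-contributions on the right are absorbed by the bootstrap hypothesis once $\eta(A)$ is small. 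This closes the bootstrap on $I_j$ and yields $\|r\|_{(X\cap K)(I_j)}\le 2C_0\|(r,\dot r)(\t_j)\|_{H^1\times L^2} + C(A)\ka$.

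Iterating this per-interval estimate across the $N=N(A,M)$ subintervals, starting from $(r,\dot r)(\t_0)=0$, gives $\|r\|_{(X\cap K)(S,\I)}\le C(A,M)$ provided $\ka(A,M)$ is chosen small enough, from which the claim on $\|u\|_{X(S,\I)}$ follows. The hard part I anticipate is propagating the $L^\I B^0_{\I,2}$ smallness of $r$ that Corollary \ref{crit nonlin} demands: this norm is not directly controlled by the inhomogeneous Strichartz inequality, so $\ka(A,M)$ must be chosen small enough that the contributions to $\|r\|_{L^\I B^0_{\I,2}}$ accumulated over all prior subintervals still fit within $\ups(A)/2$ at every time. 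It is precisely this accumulation across the $N(A,M)$ subintervals that forces $\ka$ to depend on $M$ and not only on $A$.
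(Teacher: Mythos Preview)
Your strategy is the same as the paper's: set $\Gamma:=u-w-v$ (your $r$), partition $(S,\I)$ into finitely many subintervals determined by the Strichartz norm of $w$, and run a continuity argument on each piece using Corollary~\ref{crit nonlin}. One remark on the point you flag at the end: the $L^\I_t B^0_{\I,2}$ control of $r$ is actually automatic, since in two dimensions $H^1\hookrightarrow B^0_{\I,2}$, so the energy bound on $\Gamma-\Gamma_j$ coming from Strichartz already propagates the Besov smallness. The paper simply tracks $\Gamma$ in $Y\cap H\cap L^\I B^0_{\I,2}$ all at once.

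The genuine gap is in closing the bootstrap on each $I_j$. You subdivide so that $\|w\|_{X(I_j)}\le\y_0(A)$ and then invoke Lemma~\ref{bump} to obtain only $\|w\|_{K(I_j)}\le C(A)$; via \eqref{intp1} this makes $\|w\|_{Y_2(I_j)}$ merely $O_A(1)$. In the nonlinear term the dominant $r$-contribution is then
\[
C_A(1)\,\|w\|_{Y_2}^4\,\|r\|_{Y_1}\;\sim\;C'(A)\,\|r\|_{Y_1},
\]
which is \emph{linear} in $r$ with a fixed coefficient $C'(A)$ that does not shrink with $\eta$. Since $\|r\|_{Y_1}\lec\|r\|_H^{1-8\de}\|r\|_X^{8\de}$ is at best comparable to the bootstrap quantity, you cannot absorb this term by taking $\eta$ small; the continuity argument stalls unless $C'(A)<1$, which you have no control over. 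The remedy is to use your coarse partition together with Lemma~\ref{bump} to first conclude $\|w\|_{K(S,\I)}<\I$, and then \emph{refine} the partition so that $\|w\|_{Y(I_j)}\le\eta'$ is genuinely small on each piece. With that smallness the paper's iteration $q_j\le p_j+C_1(q_j+\kappa'+\eta')^4(q_j+\kappa')$, $C_1(3\eta')^4\le 1/8$, closes to give $p_j\le 2^j\kappa'$, and choosing $\kappa$ so that $2^{N+1}\kappa'<\eta'$ yields the bound on $\|u\|_{X(S,\I)}$.
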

In the subcritical case, we may replace the smallness of $v$ in $B^0_{\I,2}$ by something like $\|v\|_{H^1_{loc}}\le (1-A)/4$.
\begin{proof}
Using the interpolation \eqref{intp1} and $\|v\|_{X(S,\I)}\le\kappa$, we have $\|v\|_{Y(S,\I)}\le C_\de(\ka^{8\de}+\ka^{1-\de}):=\kappa'$.
Let $\eta\in(0,\eta_0(A))$ and $S:=T_0<T_1<\cdot\cdot\cdot<T_N<T_{N+1}=\I$ such that
\EQ{
 \y/2 \le \|w\|_{X(T_j,T_{j+1})}\leq\eta}
for all $j=0,\dots,N$.
Then we have $N^{1/8}\eta\lec M$. The actual size of $\y=\y(A)$ and $\ka=\ka(A,M)$ will be determined later.
Again, using \eqref{intp1} we have
\EQ{
 \|w\|_{Y(T_j,T_{j+1})}\leq C_\de(\y^{8\de}+\y^{1-\de}):=\eta'.}

Let $\Gamma=u-v-w$. Then $\Gamma$ satisfies
\EQ{\label{eqGamma}
 \Gamma(t)=\Gamma_j(t)+\int_{T_j}^tU(t-s)\Big(f(w)-f(u)\Big)(s)ds,}
where $\Gamma_j$ is the free solution with the same initial data as $\Gamma$ at $t=T_j$.

Let $I=(T_j,T)$ with $T>T_j$.
The Strichartz estimates together with \eqref{intp1} and Corollary \ref{crit nonlin} give
\EQ{\label{modStri}
 \|\Ga-\Gamma_j\|_{(Y\cap H)(I)}
  &\le C_0\|f(w)-f(u)\|_{L^1(I,L^2)}\\
  &\le C_1\Big(\|w\|_{Y(I)}+\|\Gamma+v\|_{Y(I)}\Big)^4\|\Gamma+v\|_{Y(I)},}
where $1\le C_0\le C_1$ depend on $A$, provided that
\EQ{ \label{Ga H1 small}
 \|\Ga\|_{L^\I B^0_{\I,2}(I)} \le \ups(A)/2.}

Denote
\EQ{
 \pt p_j:=\|\Gamma_j\|_{(Y\cap H \cap L^\I B^0_{\I,2})(T_j,T)},
 \pq q_j(T):=\|\Gamma\|_{(Y\cap H \cap L^\I B^0_{\I,2})(T_j,T)}.}
Then $q_j$ is continuous, and from \eqref{modStri} we have the following
\EQ{
 &p_0=q_j(T_j)=0,\\
 &q_j(T)\leq p_j+C_1\Big(q_j(T)+\kappa'+\eta'\Big)^4\Big(q_j(T)+\kappa'\Big),\\
 &p_{j+1}\leq p_j+C_1\Big(q_j(T_{j+1})+\kappa'+\eta'\big)^4\Big(q_j(T_{j+1})+\kappa'\Big),}
as long as we keep \eqref{Ga H1 small}.

Now we fix $\eta=\eta(A)$ so small that we have
\EQ{
 C_1(3\eta')^4\le 1/8, \quad  \y'\le \ups(A)/2,}
and then fix $\ka=\ka(A,M)$ small enough to have
\EQ{
 2^{N+1}\ka'< \eta'.}
If $p_j\le 2^j\ka'$ and $q(T)\le 2^{j+1}\ka'$, then we have
\EQ{
 q_j(T) \leq 2^j\ka' + C_1(3\eta')^42^{2+j}\ka'\le \frac{3}{2}2^j\ka',}
hence by continuity we have $q(T)\le 2^{j+1}\ka'$, and in the same way for the limit $T\to T_{j+1}-0$,
we get $p_{j+1}\le 2^{j+1}\ka'$.

Hence by induction, we obtain $p_j\leq2^j\kappa'<\eta'$
and $q_j(T_{j+1})<\eta'$ for any $j\leq N$.
Using the interpolation \eqref{intp1}, we conclude that
\EQ{
 \|u\|_{X(S,\I)}\lec \|u\|_{Y(S,\I)}^{1/2} \lec N^{1/8}\eta'\leq C(A,M)}
as desired.
\end{proof}


\section{Uniform global bound in the sub-critical case}
In the sequel, we want to show the following result which is the crucial step in proving Theorem \ref{Main} in the sub-critical case.

\begin{prop}
There exists an increasing function
$C:[0,1)\rightarrow [0,\infty)$ such that
for any $0\leq E<1$, any global solution $u$ of
the \eqref{NLKG} with $E(u)\leq E$ satisfies
\begin{eqnarray}
\label{globou}
\|u\|_{X(\mathbb R)}\leq C(E).
\end{eqnarray}
\end{prop}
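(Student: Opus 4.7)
The plan is to carry out a Bourgain-type induction on the energy, using the local uniform nonlinear estimate (Corollary \ref{nst-crt}), the concentration lemma (Lemma \ref{bump}), the dispersion bound (Lemma \ref{long}), the energy-extraction lemma (Lemma \ref{STLE}), and the perturbation lemma (Lemma \ref{PT}) as the building blocks. Define $C(E)$ to be the supremum of $\|u\|_{X(\R)}$ over all global solutions $u$ of \eqref{NLKG} with $E(u)\le E$; a priori $C(E)\in[0,\I]$. Since $E(u)\le E<1$ forces $\|u(t)\|_{H_{loc}}\le\|u(t)\|_{H^1}\le\sqrt{E}=:A<1$ uniformly in $t$, the constants in all of the above lemmas depend only on $A=A(E)$. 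A standard small-data contraction in $X\cap K$ using the Strichartz estimate \eqref{Stz} gives $C(E)\lec\sqrt{E}$ for $E$ below an absolute threshold $E_*$, which is the base case.

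For the inductive step, fix $E\in[E_*,1)$ and pick $\y\in(0,\y_0(A))$ small enough that the $L^2$-concentration $C_2\y^{16}$ supplied by Lemma \ref{bump} (promoted to a nonlinear-energy concentration via $|u|^2\le e_N(u)$) yields $\nu:=c\,\y^8<\nu_0(A)$; set $\de:=\nu^2/2$. Given any solution $u$ with $E(u)\le E$ and $\|u\|_X\ge\y$ (else the conclusion is immediate), translate time so that $0$ lies in one of the maximal sub-intervals on which $\|u\|_X=\y$. Lemma \ref{bump} then locates $t_0$ near $0$, $c_0\in\R^2$ and $R\sim\y^{-3}$ with $\int_{|x-c_0|<R} e_N(u,t_0)\,dx\ge\nu^2$. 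Apply Lemma \ref{STLE} on the forward window $(t_0,t_0+LR)$, and via the time-reversal symmetry of \eqref{NLKG} also on $(t_0-LR,t_0)$, where $L=L(\nu,\ka)$ and $\ka$ is to be chosen. This produces times $S_\pm$ with $|S_\pm-t_0|\le LR$ and free solutions $v_\pm$ satisfying $E((u-v_\pm)(S_\pm))\le E-\de$, together with the one-sided smallness $\|v_+\|_{X\cap L^\I B^0_{\I,2}(S_+,\I)}\le\ka$ and $\|v_-\|_{X\cap L^\I B^0_{\I,2}(-\I,S_-)}\le\ka$.

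Let $\ti w_\pm$ be the NLKG solutions with initial data $(u-v_\pm)(S_\pm)$. Both satisfy $E(\ti w_\pm)\le E-\de$, so by the strong inductive hypothesis $\|\ti w_\pm\|_X\le M:=C(E-\de)<\I$. Setting $A':=\sqrt{E-\de}$ and choosing $\ka$ so that $\ka\le\min(\ups(A')/2,\ka(A',M))$, the perturbation Lemma \ref{PT} (and its time-reversed counterpart) yields $\|u\|_{X(S_+,\I)}+\|u\|_{X(-\I,S_-)}\lec C(A',M)$. The middle interval $(S_-,S_+)$ has length at most $2LR$ depending only on $E$, so Lemma \ref{long} applied there bounds the number of sub-intervals on which $\|u\|_X=\y$ by a constant $N(E)$, whence $\|u\|_{X(S_-,S_+)}\lec N(E)^{1/8}\y$. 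Combining the three pieces gives $C(E)\le\Phi(E,C(E-\de))$ for a finite function $\Phi$, and a finite iteration $E\mapsto E-\de$ starting from the small-data regime yields $C(E)<\I$ for every $E<1$.

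The main obstacle is the orderly bookkeeping of parameter choices: $\y$ must be small enough (depending on $A$) to place the Lemma \ref{bump} concentration below $\nu_0(A)$; the extraction window length $LR$ must depend only on $E$ and not on the solution, so that the middle interval is uniformly bounded and Lemma \ref{long} applies there; and the Lemma \ref{STLE}-smallness $\ka$ has to be chosen after the inductive bound $M=C(E-\de)$, so as to meet the smallness threshold $\ka(A',M)$ demanded by the perturbation Lemma \ref{PT}. Once these dependencies are laid out consistently, the scheme closes with a finite energy-decreasing iteration.
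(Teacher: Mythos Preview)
Your overall plan is the same as the paper's, and the parameter bookkeeping you describe in the last paragraph is essentially correct. There is, however, a genuine gap in the application of Lemma~\ref{STLE}. That lemma requires, besides the energy concentration at $T_1$ and the length bound $|T_2-T_1|\ge L R$, the smallness hypothesis $\|u\|_{X(T_1,T_2)}\le 2\y$. You apply it on the window $(t_0,t_0+LR)$ (and its mirror), but you have only arranged $\|u\|_X=\y$ on the single partition interval containing $t_0$; the window $(t_0,t_0+LR)$ may overlap many partition intervals, each contributing $\y$ to the $X$-norm, so $\|u\|_{X(t_0,t_0+LR)}$ is not controlled and Lemma~\ref{STLE} is not available there.

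The paper resolves this by reversing the order of the two localization steps. One first partitions $[0,\infty)$ by $0=\t_0<\t_1<\cdots$ with $\|u\|_{X(\t_{j-1},\t_j)}=\y/2$, and uses Lemma~\ref{long} to locate the \emph{first} index $J$ for which $|\t_{J+1}-\t_J|\ge LR$; Lemma~\ref{long} simultaneously bounds $J$ in terms of $\y$ and $L$. Only then does one invoke Lemma~\ref{bump} on the preceding interval $(\t_{J-1},\t_J)$ to place the concentration point $T_1$ there. Taking $T_2=\t_{J+1}$, one has both $\|u\|_{X(T_1,T_2)}\le \y/2+\y/2=\y\le 2\y$ and $|T_2-T_1|\ge LR$, so Lemma~\ref{STLE} applies, producing $S\in(T_1,T_2)$ and the free piece $v$. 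The perturbation Lemma~\ref{PT} then bounds $\|u\|_{X(S,\infty)}$, while the bound on $J$ controls $\|u\|_{X(0,S)}\le (J+1)^{1/8}\y$; time reversal handles $(-\infty,0)$. Once you insert this ``find a long interval first, then concentrate just before it'' step, your induction closes.
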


\begin{proof}
First, note that arguing as in Lemma \ref{bump}, one can show that estimate \eqref{globou} is satisfied if the energy is small. Denote by
\EQ{E^*:=
\sup\{0\leq E<1:\;    \sup_{E(u)\le E} \|u\|_{X(\mathbb R)}<\I\}.}

From the small data scattering, it is clear that $E^*>0$.
The goal is to show that $E^*=1$. Assume that $E^*<1$.
Then for any $E\in(E^*,1)$ and any $n\in(0,\I)$, there exists a global solution $u$ such that $E(u)=E$ and $\|u\|_{X(\mathbb R)}> n$.
We are going to show that if $E$ is close enough to $E^*$, then $n$ cannot be arbitrarily large.

By time translation, we may assume that $\|u\|_{X(0,\I)}>n/2$.
Since $\|u\|_{H_{loc}(\R)}\le \|u\|_{H(\R)}\le E<1$, we can apply all the lemmas in the previous sections with $A:=E$.

First we fix $\y=\y_0(A)/2$, $R=R(\y)$ and $\nu=\min(\nu_0(A),\sqrt{C_2}\y^8)/2$, where $\y_0(A)$, $R(\y)$, $\nu_0(A)$ and $C_2$ are given in Lemmas \ref{bump} and \ref{STLE}.

Next we choose $E\in(E^*,E^*+\nu^2/4)$ such that $E-\nu^2/2 < E^*-\nu^2/4=:E'$.
Then by definition of $E^*$, there exists $M<\I$ such that $\|w\|_X\le M$ for any global solution $w$ with $E(w)\le E'$.
We fix $\ka=\min(\ups(A),\ka(A,M))/4$, where $\ups$ and $\ka$ are given in Corollary \ref{crit nonlin} and Lemma \ref{PT}, respectively.
Then we fix $L=L(\nu,\ka)$ which was given in Lemma \ref{STLE}.

Now let $0=\t_0<\t_1<\cdots$ such that $\|u\|_{X(\t_{j-1},\t_j)}=\y/2$, and let $J\in\N$ be the smallest satisfying $|\t_J-\t_{J+1}|\ge LR$. Then Lemma \ref{long} gives an upper bound on $J$:
\EQ{
 1 \gec \sum_{j=1}^J \frac{\y^{40}}{jLR+\y^{-3}} \sim \frac{\y^{43}}{L}\log J.}

Applying Lemma \ref{bump} on $(\t_{J-1},\t_J)$, we get some $T_1\in(\t_{J-1},\t_J)$, $c\in\R^2$ such that with $T_2=\t_{J+1}$, all the assumptions for Lemma \ref{STLE} are fulfilled.
By using its conclusion, let $w$ be the global solution with the same initial data as $u-v$ at some $t=S\in (T_1,T_2)$.
Since $E(w)\le E-\nu^2/2<E'$, we have $\|w\|_X\le M$. Also $\|w\|_{H_{loc}}\le E(w)\le E=A$.
Thus all the assumptions in Lemma \ref{PT} hold, so we obtain
\EQ{
 \|u\|_{X(S,\I)}\le C(A,M),}
Since $S\in(T_1,T_2)$, we get a bound on $n$ by using the estimate on $J$ as well:
\EQ{
 n/2 \le \|u\|_{X(0,\I)} \le (J+1)^{1/8}\y+C(A,L),}
which is a contradiction.
\end{proof}


\section{Scattering in the critical case}
Let $u$ be a solution of \eqref{NLKG} with $E(u)= 1$, and fix it. We denote the concentration radius of energy $1-\e$ at time $t$ by
\EQ{
 r_\e(t) = R_{1-\e}[u](t).}
It is easy to see by finite propagation that
\EQ{ \label{Lip}
 |r_\e(t)-r_\e(s)| \le |t-s|,}
for any $\e,s,t$. If we have the scattering, then
$\displaystyle\lim_{t\to\I} r_\e(t)=\I$ for any $\e>0$. We will reduce the problem to the subcritical result with
careful investigation on the evolution of $r_\e(t)$ case by case.
The most problematic case is that where $r_\e(t)$ is neither bounded from above nor
away from zero, then we will perform the separation of concentration energy in a similar way as above, but using the oscillatory behavior of the concentration radius.
The use of the critical Besov space in the nonlinear estimate is essential only in that case.


\subsection{Dispersive and non-dispersive cases}
First we deal with the two extreme cases.
Let $\e\in(0,1)$. By Lemma \ref{crit-nlst} together with \eqref{equiv}, we can control the nonlinearity on any interval $I$ satisfying
\EQ{ \label{Str rad}
 \inf_{t\in I} r_\e(t)\ge 6,}
with constants depending only on $A=1-\e$.
Therefore, if we have
\EQ{
 \liminf_{t\to\I} r_\e(t) > 6,}
for some $\e\in(0,1)$, then the scattering follows in the same way as in the subcritical case.
More precisely, here we need two different $A$'s, namely $A_0=1-\e$ before the perturbation lemma, and $A_1=\sqrt{1-\nu(A_0)^2/4}$ for the nonlinear solution $w$ with reduced energy.
Indeed, we would fall into a vicious circle if we would insist a single $A$, since $1-A_1\ll 1-A_0$.
Specifically, those parameters are defined by
\EQ{ \label{double A def}
 \pt \y=\y_0(A_0)/2, \pq \nu=\min(\nu_0(A_0),\sqrt{C_2}\y^8)/2,
 \pr A_1 = \sqrt{1-\nu(A_0)^2/4},\pq \ka=\min(\ups(A_1),\ka(A_1,M))/4, \pq L=L(\nu,\ka),}
where $M<\I$ is given by the subcritical result that we have $\|w\|_X\le M$ for any nonlinear solution $w$ satisfying $E(w)\le A_1^2$.
Thus using Lemmas \ref{bump} and \ref{STLE} with $A=A_0$ and $R=R(\y)$, and Lemma \ref{PT} with $A=A_1$,
we deduce that $\|u\|_{X(0,\I)}<\I$, which implies the scattering for $u$.

Next consider the case where we have for all $\e\in(0,1)$
\EQ{ \label{nondisp}
 \ti r_\e:=\limsup_{t\to\I} r_\e(t) + 1 < \I,}
namely the solution does not disperse at all as $t\to\I$. By the definition of $r_\e(t)$, there exists a function $c:[0,\I)\to\mathbb R^2$ such that
\EQ{
 \int_{|x-c(t)|>r_\e(t)} e_N(u,t) dx \le \e,}
and the above assumption \eqref{nondisp} implies that
\EQ{\label{I}
 \limsup_{t\to\I} \int_{|x-c(t)|>\ti r_\e} e_N(u,t) dx \le \e.}
The finite propagation property implies that we can choose $c(t)$ such that
\EQ{\label{D}
 \limsup_{t\to\I} |c(t)|/t +1 \le M \lec 1,}
with $M$ independent of $\e$.
Then we obtain using the H\"older inequality and $e_N(u)\ge |u|^2$, that for $t$ large enough
\EQ{ \label{local L2 dominated}
 \|u(t)\|_{L^2} &\lec \e^{1/2} + \|u(t)\|_{L^2(|x|<M|t|)}\\
  &\lec \e^{1/2} + (\ti r_\e)^{1-2/p}\|u(t)\|_{L^p_x(|x|<M|t|)},}
for any $p>2$. On the other hand, the generalized Morawetz inequality \eqref{ME} 
implies that
\EQ{
 \lim_{T\to\I} \frac{1}{T} \int_{T}^{C T} \int_{|x|\le Mt} \G(u) dx dt =0}
for any $C\in (1,\I)$.
Combining this with \eqref{local L2 dominated}, we obtain
\EQ{
\label{L2}
 \lim_{T\to\I} \frac{1}{T} \int_T^{C T} \int_{\mathbb R^2} |u|^2 dx dt = 0.}
By using this and the inversional identity as in Lemma \ref{II} 
, we get
\EQ{
\label{E}
 \lim_{t\to\I} \int_{|x|\le Mt} Q(u) dx = 0,}
where $Q(u)$ is as defined in \eqref{Q}.

Now suppose that
\EQ{
 \lim_{n\to\I} \|\na u(t_n)\|_{L^2} = 1}
for some sequence $t_n\to\I$. Then $E(u)=1$ implies that
\EQ{
 \lim_{n\to\I} \|\dot u(t_n)\|_2^2 + \|u(t_n)\|_2^2 \to 0,}
and hence
\EQ{
 \lim_{n\to\I} \int_{|x|\le Ct_n} Q(u(t_n)) dx \ge 1,}
contradiction. Therefore we have
\EQ{
 \limsup_{t\to\I} \|\na u(t)\|_{L^2} <1.}
On the other hand, \eqref{E} together with \eqref{D} and \eqref{I} implies
\EQ{
\limsup_{t\to\I} \|u(t)\|_{L^2} =0.}
Hence we have
\EQ{
 \limsup_{t\to\I} \|u(t)\|_{H^1} < 1,}
so that we can treat this case in the same way as the above dispersive case, with $A_0$ between $1$ and the left hand side, and the other parameters given by \eqref{double A def}.
However, we get a contradiction in the end between the scattering and the assumption \eqref{nondisp}.


\subsection{Waving concentration case}
This is the  wildest case. We have
\EQ{ \label{asy conc}
 \liminf_{t\to\I} r_\e(t) \le 6}
for all $\e\in(0,1)$, and
\EQ{ \label{asy disp}
 \limsup_{t\to\I} r_\e(t) = \I,}
for some $\e\in(0,1)$; namely the solution repeats dispersing and regathering infinitely many times.
Now we fix $\e\in(0,1/4)$ for which we have \eqref{asy disp}.


\subsection{Extracting very long dispersive era}
By \eqref{asy conc}, there exists $\T_0>0$ such that
\EQ{ \label{preconc}
 r_{1/4}(\T_0) \le 6.}
Let $B:(0,1/4]\to[18,\I)$ be a continuous function of $\e$. \eqref{asy disp} supplies $\T>\T_0$ satisfying
\EQ{ \label{choose T}
 r_\e(\T) \ge B(\e).}
The actual form of $B$ will be determined later.

By \eqref{choose T}, the Lipschitz continuity \eqref{Lip}, preceding
concentration \eqref{preconc} and the assumption $B(\e)/2\ge 9>6$,
there exist $I=(\T_1,\T_2)$ such that
\EQ{
 &\T_0 < \T_1 < \T < \T_2,\quad |\T_2-\T_1|\ge B(\e),\quad
  r_\e(\T_1)=6,}
and
\EQ{
 [\T_1,\T_2]\ni\forall t,\quad r_\e(t)\ge 6,}
which allows us to use the Strichartz estimate in $I$ depending only on $A:=1-\e$.

Now the idea is to argue as in the subcritical case inside $I$.
After separating the concentrated energy into a free solution $v$ and another nonlinear solution $w$ with reduced energy,
we can apply the perturbative argument beyond $\T_2$, thanks to the decay of the free solution and Corollary \ref{crit nonlin}.
Indeed, this is the only place we essentially need that version of nonlinear estimate, otherwise the version with $H^1_{loc}$ would suffice.

Note that $v$ does not decay uniformly in $H^1_{loc}$.
Indeed, if we consider a free wave $v$ whose frequency is supported around $|\x-Nc|\ll 1$ with $N\gg 1$ and $|c|=1$, then $v(t,x)$ remains essentially unchanged around $x=tc$ for $|t|\ll R$, and so $H^1_{loc}$ does not decrease during that period.
For such high frequency free waves, we can say that $\|v\|_{H^1}^2$ is reduced almost to $E_0(v)/2$ after some time (independent of $N$) due to the equipartition of energy, but it does not imply any decrease on $\|u\|_{H^1_{loc}}$; without any further information on $w=u-v$, it can still approach $1$ arbitrarily closely at later times.
On the other hand, when measured in the Besov space $B^0_{\I,2}$, such a high-frequency wave packet is small from the beginning because of its concentration in high frequency.


Now we set up the assumptions for Lemma \ref{STLE}. As before we fix $A_0=1-\e$, and the other parameters are given by \eqref{double A def}.

\subsection{Temporary scattering case}
First we consider the case $\|u\|_{X(I)}\le2\y$.
The Strichartz norm in $I$ may be too small to apply Lemma \ref{bump}.
Hence we just choose $T_1=\T_1$, $T_2=\T_2$. By the definition of $r_\e$, there is $c\in\R^2$ such that
\EQ{ \label{energy bump at first}
 \int_{|x-c|<R} e_N(u,T_1) dx \ge \nu^2,}
with $R=6$.
Setting $B(\e)\ge 6L$, we get all the assumptions of Lemma \ref{STLE} with $A=A_0$.


\subsection{Multilayer case}
Next we consider the remaining case $\|u\|_{X(I)}>2\y$. Then we split $I$ into subintervals by $\T_1=\t_0<\t_1<\cdots<\t_N=\T_2$, with some $N\ge 3$ such that
\EQ{
 \|u\|_{X(\t_{j-1},\t_j)} = \y,\quad j=1,2,\ldots N-1.}
Then Lemma \ref{long} implies that
\EQ{
 \sum_{j=1}^N \frac{\y^{40}}{\t_j-\t_0+\y^{-3}} \lec 1.}
If we have $|\t_j-\t_{j-1}|\le LR(\y)$ for all $j=1,\dots N$, then
\EQ{
 1\gec \sum_{j=1}^N \frac{\y^{43}}{jL+1} \gec \y^{43}\log N,}
and so $|\T_2-\T_1|\le LRN \lec LRe^{C\y^{-43}}$ with some constant $C\ge 1$.

Hence if we choose $B(\e)\gg LR(\y)e^{C\y^{-43}}$, then there exists $j\in\{0,\dots,N-1\}$ such that $|\t_j-\t_{j+1}|\ge LR(\y)$.
If $j=0$, then we choose $T_1=\T_1=\t_0$ and $T_2=\t_1$. As in the previous case, there exists $c\in\R^2$ with \eqref{energy bump at first} with $R=6$.
Since $R(\y)\gg 6$, we get all the assumptions of Lemma \ref{STLE} with $A=A_0$.

If $j\ge 1$, then we choose $T_2=\t_{j+1}$, and
Lemma \ref{bump} implies that there exists $T_1\in(\t_{j-1},\t_j)$ and $c\in\R^2$ such that
\EQ{
 \int_{|x-c|<R} e_N(u,T_1) dx \ge \nu^2,}
with $R=R(\y)$, hence all the assumptions of Lemma \ref{STLE} are fulfilled with $A=A_0$.

Thus we can use Lemma \ref{STLE} with $A=A_0$ in all cases,
hence we get the scattering of $u$ as before, even though it contradict the assumption \eqref{asy conc}.


\subsection{Global Strichartz estimate}


We end this section with a remark about global  Strichartz bounds.
\begin{cor}
The global  solution  $u$ of theorem \ref{nlkg} satisfies the following
global bounds for any $\e\in(0,1)$.
\EQ{
  \| u  \|_{X(\mathbb R)} + \| f(u)  \|_{L^1(\mathbb R;L^2(\R^2)) } \leq C_\e  (\inf_{t\in \mathbb R  } r_\e(t) ).}
\end{cor}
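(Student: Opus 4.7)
The plan is to run the argument of the subcritical Proposition (Section~5) with the pointwise local-energy control $\|u(t)\|_{H^1[6]}\le 1-\e$ playing the role of the global hypothesis $E(u)\le E<1$. Since \eqref{equiv} yields $\|u\|_{H_{loc}(\R)}\le 1-\e$ as soon as $\inf_t r_\e(t)\ge 6$, I would treat the case $\rho:=\inf_t r_\e(t)\ge 6$ first; a smaller $\rho$ can be absorbed by dilating the cubic decomposition at scale $\rho/3$, at the cost of additional $\rho$-dependence in the constants.

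Set $A:=1-\e$ and fix the parameters $\y,\nu,A_1,\kappa,L$ exactly as in \eqref{double A def}, where $M<\I$ is the bound furnished by the subcritical Proposition at energy level $A_1^2<1$; every such quantity depends on $\e$ alone. I would control $\|u\|_{X(0,\I)}$ first (the negative half-line being symmetric by time-reversal). Partition $(0,\I)$ by $0=\t_0<\t_1<\cdots$ with $\|u\|_{X(\t_{j-1},\t_j)}=\y$. If every gap $\t_j-\t_{j-1}\le LR(\y)$, Lemma~\ref{long} caps the total number of subintervals by $N\lec\exp(CLR(\y)\y^{-43})$, so $\|u\|_{X(0,\I)}\le N^{1/8}\y$ is bounded by $\e$ alone. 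Otherwise, the first gap exceeding $LR(\y)$ provides, via Lemma~\ref{bump} on the preceding subinterval, a time $T_1$ and a center $c$ with $\int_{|x-c|<R(\y)}e_N(u,T_1)\,dx\ge\nu^2$; Lemma~\ref{STLE} with $A=A_0:=1-\e$ then produces a free wave $v$ and a time $S$ past which the nonlinear solution $w$ with data $u(S)-v(S)$ satisfies $E(w)\le 1-\nu^2/2\le A_1^2$. Since $w$ is strictly subcritical, the subcritical Proposition yields $\|w\|_X\le M$; combining with $\|w\|_{H_{loc}}\le A_1$ and the smallness of $v$, Lemma~\ref{PT} with $A=A_1$ gives $\|u\|_{X(S,\I)}\le C(A_1,M)$. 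Adding the pre-$S$ portion (once more bounded via Lemma~\ref{long}) delivers $\|u\|_{X(0,\I)}\lec C_1(\e)$, hence $\|u\|_{X(\R)}\le C_\e(\rho)$.

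Once $\|u\|_{X(\R)}$ is under control, I would handle $\|f(u)\|_{L^1(\R;L^2)}$ by bootstrap: subdivide $\R$ into $\lec(C_\e(\rho)/\y_0(A))^8$ intervals on each of which $\|u\|_X\le\y_0(A)$, apply Lemma~\ref{bump} per interval to get $\|u\|_K\lec 1$, and sum to obtain $\|u\|_{K(\R)}\lec C_\e(\rho)$. Feeding this into Corollary~\ref{nst-crt} with $u_0=0$ and $I=\R$, and estimating the $Y_1,Y_2$ norms by the interpolation inequalities \eqref{intp1}, yields $\|f(u)\|_{L^1(\R;L^2)}\le C_\e(\rho)$, as required.

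The principal technical hurdle is certifying that every constant depends only on $\e$ (and possibly $\rho$), not on the individual solution. This is precisely the role of the two-tier scheme $(A_0,A_1)$ from \eqref{double A def}: setting $A_0=A_1$ would create a vicious circle since $1-A_1\ll 1-A_0$, whereas using $A_0=1-\e$ for the pre-separation nonlinear estimate and $A_1=\sqrt{1-\nu^2/4}$ for the post-separation perturbation step breaks the circle and makes all constants functions of $\e$ alone. The surviving $\rho$-dependence enters only through the rescaling needed when $\rho<6$.
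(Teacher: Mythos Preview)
Your argument for $\rho:=\inf_t r_\e(t)\ge 6$ is essentially what the paper has in mind: the paper says the proof is obtained by being ``more quantitative in the estimates in the previous subsections,'' and your outline does exactly that, correctly invoking the two-tier scheme $(A_0,A_1)$ of \eqref{double A def}. The treatment of $\|f(u)\|_{L^1L^2}$ at the end is also sound.

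The gap is in your reduction of small $\rho$ to the case $\rho\ge 6$ by ``dilating the cubic decomposition at scale $\rho/3$.'' Shrinking the cut-off $\chi$ to scale $\rho$ forces $|\De\chi_\rho|\sim\rho^{-2}$, and the identity \eqref{H1 cutoff} then gives
\[
\|\na(\chi_\rho u)\|_{L^2}^2 \le \int_{|x-c|<\rho}|\na u|^2\,dx + \frac{C}{\rho^2}\int_{|x-c|<\rho}|u|^2\,dx.
\]
The first integral is below $1-\e$ by hypothesis, but the second term can be of order $\rho^{-2}$: neither the local bound $\int_{|x-c|<\rho}|u|^2\le 1$ nor the global bound $\|u\|_{L^2}\le 1$ supplies any compensating smallness. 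Once $\|\na(\chi_\rho u)\|_{L^2}^2$ exceeds $1$, the sharp Trudinger--Moser inequality \eqref{Trudinger-Moser} no longer applies to the localized piece and the exponential factor is uncontrolled; this is not a matter of worse constants but of the estimate in Lemma~\ref{crit-nlst} failing outright. (Enlarging the cut-off instead is no help either, since $\|u\|_{H^1[R]}$ is increasing in $R$ and you only control it at scale $\rho$.)

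The paper handles small $\rho$ by a different device, which it explicitly flags as the missing ingredient: Lemma~\ref{impro-MT} shows that under the joint hypotheses $\|\na u\|_{L^2}^2+\|u\|_{L^2}^2\le 1$ and $\|u\|_{H^1[r]}\le\th<1$ (for \emph{any} fixed $r>0$) one has the improved Trudinger--Moser bound $\int(e^{4\pi\la|u|^2}-1)\,dx\le C\|u\|_{L^2}^2$ with some $\la>1$ depending on $\th$ and $r$. That extra room $\la>1$ is precisely what the logarithmic-inequality step \eqref{exponential estimate} needs, and it makes Lemma~\ref{crit-nlst} go through with \eqref{H6} replaced by \eqref{th-r}. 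The point is that the \emph{global} constraint $\|u\|_{H^1}\le 1$ (which you have, since $E(u)\le 1$) combined with the local one forces enough structure to recover the nonlinear estimate without shrinking the decomposition. With this in hand your quantitative argument applies verbatim for every $\rho>0$, the $\rho$-dependence entering only through the constants of Lemma~\ref{impro-MT}.
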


The proof of this corollary   is left to the reader. One has just to be
more quantitative in the estimates in the previous subsections.
Moreover, one  has also to    use  the following  lemma,  which
yields that Lemma \ref{crit-nlst} still
holds if \eqref{H6} is replaced by \eqref{th-r}.
\begin{lem} \label{impro-MT}
For any $\th\in(0,1) $ and $r > 0$, take $\lambda  $ close to 1 such
that
$$ (\sqrt{\theta\la} + 4\sqrt{2(\la-1)}/r  )^2
 + 2(\la-1) < 1 < \la .
$$
There exists $C$ depending only on  $\th,r $ and $\lambda$ such that
if
\EQ {  \| \nabla u\|_{L^2}^2 +  \| u\|_{L^2}^2 \leq 1 \quad
\hbox{and}\quad   \| u \|_{H^1[r]} \leq \theta     \label{th-r},  }
 then
\EQ{ \label{conc-impr}  \int_{\R^2}  (e^{4\pi \lambda |u|^2} -1) dx \leq C \|u\|_{L^2}^2 .
  }
\end{lem}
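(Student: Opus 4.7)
The plan is to reduce the improved Trudinger-Moser estimate \eqref{conc-impr} to the sharp Trudinger-Moser inequality \eqref{Trudinger-Moser} via the cubic decomposition of Section~3 adapted to radius $r$, and then to convert the resulting uniform-constant bound into one proportional to $\|u\|_{L^2}^2$ by exploiting the strict slack in the hypothesis. First I rescale the cut-off $\chi$ of \eqref{cubic decop}: set $\chi_r(x):=\chi(6x/r)$, which is supported in $\{|x|\le r\}$, identically $1$ on $\{|x|\le r/3\}$, with $\|\nabla\chi_r\|_\infty\le 3/r$ and $\|\Delta\chi_r\|_\infty\lesssim r^{-2}$. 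Place translates $c_{j,k}\in\R^2$ so that the balls $\{|x-c_{j,k}|<r/3\}$ cover $\R^2$ with bounded overlap, and define $u^{j,k}(x):=\chi_r(x-c_{j,k})u(x)$. The pointwise domination \eqref{nonlin decop} applied to $g(s)=e^{4\pi\lambda|s|^2}-1\ge 0$ then gives
\EQ{
\int_{\R^2}(e^{4\pi\lambda|u|^2}-1)\,dx \le \sum_{j,k}\int_{\R^2}(e^{4\pi\lambda|u^{j,k}|^2}-1)\,dx,
}
reducing the problem to each localized piece.

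For each piece I aim to show that $v_{j,k}:=\sqrt{\lambda}\,u^{j,k}$ satisfies $\|v_{j,k}\|_{H^1}^2<1$ strictly. I expand $\|\nabla u^{j,k}\|_{L^2}$ by the triangle inequality as $\|\chi_r\nabla u\|_{L^2}+\|(\nabla\chi_r)u\|_{L^2}$, bounding the first piece by $\sqrt{\theta}$ using $\chi_r^2\le 1$ and $\|u\|_{H^1[r]}\le\theta$, and the second by $(3/r)\|u\|_{L^2}$ using the \emph{global} $L^2$ bound (via $\|u\|_{L^2}^2\le 1-\|\nabla u\|_{L^2}^2$, which is what pulls in the $\sqrt{\lambda-1}$ factor). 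The remaining $\|u^{j,k}\|_{L^2}^2$ contribution to $\|v_{j,k}\|_{H^1}^2$ is similarly controlled via the global $L^2$ bound, producing the additive $2(\lambda-1)$ through the identity $\lambda\|u\|_{L^2}^2=\|u\|_{L^2}^2+(\lambda-1)\|u\|_{L^2}^2$ and absorbing the residual $\chi_r\Delta\chi_r$ cross term of \eqref{H1 cutoff}. A careful Young-type recombination then produces exactly
\EQ{
\|v_{j,k}\|_{H^1}^2 \le \bigl(\sqrt{\theta\lambda}+4\sqrt{2(\lambda-1)}/r\bigr)^2+2(\lambda-1)<1,
}
by hypothesis. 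The sharp Ruf inequality \eqref{Trudinger-Moser} then delivers $\int(e^{4\pi|v_{j,k}|^2}-1)\,dx\le C$ uniformly in $(j,k)$.

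To upgrade the uniform constant to a bound proportional to $\|u^{j,k}\|_{L^2}^2$, I use the strict margin in the hypothesis: choose $\lambda'\in(\lambda,\lambda+\eta)$ with $\eta$ so small that the hypothesis still holds for $\lambda'$, so the same argument gives $\int(e^{4\pi\lambda'|u^{j,k}|^2}-1)\,dx\le C'$. Splitting $\{|u^{j,k}|\le 1\}$, where $e^{4\pi\lambda|u^{j,k}|^2}-1\le(e^{4\pi\lambda}-1)|u^{j,k}|^2$, from $\{|u^{j,k}|>1\}$, where $(e^{4\pi\lambda|u^{j,k}|^2}-1)\le|u^{j,k}|^2(e^{4\pi\lambda|u^{j,k}|^2}-1)^{1-\e}(e^{4\pi\lambda'|u^{j,k}|^2}-1)^{\e}$ for $\e=(\lambda'-\lambda)/\lambda'$, and applying H\"older against the $\lambda'$-bound, yields $\int(e^{4\pi\lambda|u^{j,k}|^2}-1)\,dx\le C_{\theta,r,\lambda}\|u^{j,k}\|_{L^2}^2$. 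Summing over $(j,k)$ via the scale-$r$ analogue of \eqref{L2 decop}, $\sum_{j,k}\|u^{j,k}\|_{L^2}^2\lesssim\|u\|_{L^2}^2$, closes \eqref{conc-impr}. The main obstacle is calibrating the constants in the middle paragraph so that the peculiar form $(\sqrt{\theta\lambda}+4\sqrt{2(\lambda-1)}/r)^2+2(\lambda-1)$ falls out precisely; this forces the cross-term $\|(\nabla\chi_r)u\|_{L^2}$ to be paired with the global $L^2$ bound rather than the $H^1[r]$ bound, and the $L^2$ correction in \eqref{H1 cutoff} to be routed through $\lambda-1$ rather than absorbed trivially.
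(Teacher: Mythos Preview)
Your direct approach has a genuine gap in the middle paragraph. The claimed bound
\[
\|v_{j,k}\|_{H^1}^2 \le \bigl(\sqrt{\theta\lambda}+4\sqrt{2(\lambda-1)}/r\bigr)^2+2(\lambda-1)
\]
cannot follow from the hypotheses alone. To get the term $4\sqrt{2(\lambda-1)}/r$ from $\sqrt{\lambda}\,\|(\nabla\chi_r)u\|_{L^2}$, and the additive $2(\lambda-1)$ from $\lambda\|u^{j,k}\|_{L^2}^2$, you would need $\|u\|_{L^2}^2\le 2(1-1/\lambda)$. But the hypothesis only gives $\|u\|_{L^2}^2\le 1-\|\nabla u\|_{L^2}^2$, and nothing forces $\|\nabla u\|_{L^2}^2$ to be near $1$; indeed $\|u\|_{L^2}^2$ may be close to $1$. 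In that regime your localized pieces can have $\lambda\|u^{j,k}\|_{H^1}^2>1$ (take e.g.\ $\theta$ close to $1$ and $r$ small), and Trudinger--Moser does not apply. Your parenthetical ``which is what pulls in the $\sqrt{\lambda-1}$ factor'' is exactly the unjustified step.

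This is why the paper argues by contradiction. If \eqref{conc-impr} fails along a sequence $u_n$, then Trudinger--Moser itself forces $\liminf_n\bigl(\|\nabla u_n\|_{L^2}^2+\tfrac12\|u_n\|_{L^2}^2\bigr)\ge 1/\lambda$, which combined with $\|u_n\|_{H^1}\le 1$ yields $\limsup_n\tfrac12\|u_n\|_{L^2}^2\le 1-1/\lambda$. Only \emph{after} extracting this smallness of $\|u_n\|_{L^2}$ does the cutoff computation produce
\[
\|\nabla(\phi u_n)\|_{L^2}^2+\|\phi u_n\|_{L^2}^2 \le \bigl(\sqrt{\theta}+(4/r)\sqrt{2(1-1/\lambda)}\bigr)^2+2(1-1/\lambda)<1/\lambda,
\]
which is exactly the condition in the statement after multiplying through by $\lambda$. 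The peculiar form of the hypothesis is tailored to this contradiction mechanism, not to a direct bound. A secondary issue: your $\lambda'$-trick in the last paragraph, as written, only yields a bound $C(\|u^{j,k}\|_{L^2}^2)^{(\lambda'-\lambda)/\lambda'}$ on the set $\{|u^{j,k}|>1\}$, not the full power $\|u^{j,k}\|_{L^2}^2$, so even granting step~2 the upgrade would not close.
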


\begin{proof}
By contradiction. Assume (\ref{conc-impr}) does not hold, then
there exists a sequence  $u_n$ such that
$\| u_n \|_{H^1} \leq 1 $,   $ \| u_n \|_{H^1[r]} \leq \theta     $
and
$$   \int  (e^{4\pi \lambda |u_n|^2} -1) \geq n \|u_n\|_{L^2}^2 .   $$
Hence, necessarily
\EQ{  \liminf_{n\to \infty}   \| \nabla u_n\|_{L^2}^2 +  \frac12 \| u_n\|_{L^2}^2
  =: \alpha \geq \frac{1}{\lambda},   }
otherwise, we get a contradiction by applying the Trudinger-Moser
inequality  (see for instance \cite[Proposition 1]{2Dglobal}).

Hence, we deduce that
\EQ{  \limsup_{n\to \infty}     \frac12 \| u_n\|_{L^2}^2
   \leq 1 - \frac{1}{\lambda}.   }

Take $\phi$ a cut-off function such that
$\phi = 1$ for $|x| \leq r/2$ and $\phi = 0$ for $|x| \geq r$, $0\leq \phi  \leq 1$
and $ \| \nabla \phi\|_{L^\infty}  \leq 4/r$. Hence,
\EQ{
 &\limsup_{n\to \infty}  \|\nabla (\phi u_n) \|_{L^2}^2
   +  \|\phi u_n \|_{L^2}^2 \\
 &\leq  \limsup_{n\to \infty}
 (   \| \phi \nabla  u_n  \|_{L^2} +    \|  u_n \nabla  \phi \|_{L^2}    )^2
 + \|\phi u_n \|_{L^2}^2   \\
 &\leq   (\sqrt \theta   + \sqrt{2(1-1/\lambda)}4/r )^2
 + 2(1-1/\lambda)  < 1/\lambda.}
Hence, by Trudinger-Moser, we deduce that for $n$ big enough
\EQ{ \label{conc-impr1}  \int  (e^{4\pi \lambda |\phi u_n|^2} -1) \leq C \|\phi u_n\|_{L^2}^2 .
  }
We can cover $\R^2$ by balls of radius $r/2$ in such a way that each
point is in at most 10 balls of the same centers and radius $r$.
Adding the estimates (\ref{conc-impr1})   together, we deduce that
\EQ{ \label{conc-impr2}  \int  (e^{4\pi \lambda | u_n|^2} -1) \leq 10  C \| u_n \|_{L^2}^2 .
  }
which gives a contradiction. Hence (\ref{conc-impr}) holds.

\end{proof}

\section{Criticality of the nonlinear estimate by the Strichartz norms}
We see that without the assumption on the local $H^1$ norm, our nonlinear estimate does not hold generally in the critical case.
\newcommand{\rh}{\rho}
\begin{prop} \label{prop:ctex NLKG}
For any $\de>0$, there exists a sequence of radial free Klein-Gordon solutions $v_N$ ($N\to\I$) such that
\EQ{
 \pt \int_{\R^2} |\dot v_N(t)|^2 + |\na v_N(t)|^2 + |v_N(t)| dx<1, \pq E(v_N,0) \le 1 +\de,
 \pr \|f(v_N)\|_{L^p_t L^q_x (|t|\ll N^{-1},|x|\ll N^{-1})} \ge C_\de (\log N)^{1/2},}
for any $p,q\in[1,\I]$ satisfying $1/p+2/q=2$.
\end{prop}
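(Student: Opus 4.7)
My plan is to take a slightly reduced Moser--Trudinger extremal sequence as Cauchy data for the free Klein--Gordon equation and to use strict finite propagation speed to identify the solution on a thin light cone of radius $1/N$, on which it turns out to be pointwise comparable to $\sqrt{\log N}$. Let $\phi_N$ be the standard radial Moser sequence on $\R^2$: $\phi_N(x)=\sqrt{\log N/(2\pi)}$ for $|x|\le 1/N$, $\phi_N(x)=\log(1/|x|)/\sqrt{2\pi\log N}$ for $1/N\le|x|\le 1$, and $\phi_N\equiv 0$ elsewhere, so that $\|\na\phi_N\|_{L^2}^2=1$ and $\|\phi_N\|_{L^2}^2=O(1/\log N)$. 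Given $\delta>0$, fix $\gamma=\gamma(\delta)>1/4$ with $e^{-2\gamma}<2\delta$, set $c_N^2:=1-\gamma/\log N$, and define $v_N$ to be the free Klein--Gordon solution with Cauchy data $(c_N\phi_N,0)$.

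Since the linear energy is conserved,
\[
\int_{\R^2}(|\dot v_N|^2+|\na v_N|^2+|v_N|^2)\,dx=c_N^2\bigl(1+\|\phi_N\|_{L^2}^2\bigr)=1-\frac{\gamma-1/4+o(1)}{\log N}<1
\]
for $N$ large. For the nonlinear energy, the substitution $u=\log(1/|x|)$ reduces $\int F(c_N\phi_N)\,dx$ to an elementary exponential integral: the inner disk $|x|\le 1/N$, on which the integrand is constantly equal to $N^{2c_N^2}/(8\pi)=N^2 e^{-2\gamma}/(8\pi)$, contributes $e^{-2\gamma}/8+o(1)$, and the annular integral is dominated by the endpoint $u=\log N$ and contributes the same $e^{-2\gamma}/8+o(1)$. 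Hence $E(v_N,0)\le 1+e^{-2\gamma}/2+o(1)\le 1+\delta$ for $N$ large by the choice of $\gamma$.

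The crucial observation is that the spatially constant function $w(t,x):=a_N\cos t$, with $a_N:=c_N\sqrt{\log N/(2\pi)}$, is a smooth Klein--Gordon solution (indeed $\ddot w=-w$ and $\Delta w=0$) which agrees with $v_N$ in both components of the Cauchy data throughout the disk $|x|\le 1/N$. Strict finite propagation speed for the Klein--Gordon equation, applied to the difference $v_N-w$ whose data vanishes on this disk, therefore forces
\[
v_N(t,x)=a_N\cos t\qquad\text{on the cone }\{|t|+|x|\le 1/N\}.
\]
On the box $B_N:=\{|t|\le 1/(2N),\ |x|\le 1/(2N)\}$ we consequently have $|v_N|=a_N|\cos t|\ge a_N(1-1/(8N^2))$, and since $4\pi a_N^2=2c_N^2\log N=2\log N-2\gamma$, the nonlinearity admits the pointwise lower bound $|f(v_N)|\gec e^{4\pi a_N^2}a_N\gec e^{-2\gamma}N^2\sqrt{\log N}$ on $B_N$. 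Integrating,
\[
\|f(v_N)\|_{L^p_t L^q_x(B_N)}\gec N^{-1/p-2/q}\cdot e^{-2\gamma}N^2\sqrt{\log N}=N^{2-(1/p+2/q)}\,C_\delta\sqrt{\log N}=C_\delta\sqrt{\log N},
\]
where the exponent of $N$ collapses to zero precisely by the scaling hypothesis $1/p+2/q=2$, and $C_\delta\asymp e^{-2\gamma}$ depends only on $\delta$.

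The only delicate point is the finite-propagation identification on the cone for Cauchy data that is merely $H^1$, but this is standard from the Klein--Gordon energy inequality with strict speed--one domain of dependence. All remaining steps are routine bookkeeping of the Moser--Trudinger scaling combined with the exact cancellation $N^{2-(1/p+2/q)}=1$ afforded by the hypothesis on $(p,q)$.
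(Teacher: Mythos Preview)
Your proof is correct and the approach is genuinely different from the paper's. The paper defines $v_N$ through a Fourier-side cutoff, taking $\widehat{v_N(0)}(\xi)\sim|\xi|^{-2}$ on an annulus $1<|\xi|<e^{-a}N$, and then bounds the free evolution from below in the region $|t|,|x|\sim\varepsilon N^{-1}$ by directly estimating the oscillatory integral $\int|\xi|^{-2}\cos(-t\LR{\xi}+\xi x)\,d\xi$ via a dyadic decomposition of $\xi$. You instead take the physical-space Moser extremal, which is \emph{exactly constant} on the inner disk, and exploit finite propagation speed for Klein--Gordon to identify $v_N$ with the explicit solution $a_N\cos t$ on the cone $|t|+|x|\le 1/N$; this bypasses the Fourier analysis entirely and makes the pointwise lower bound immediate. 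Your argument is shorter and more transparent for the Klein--Gordon case, and the bookkeeping (in particular the choice $\gamma>1/4$ to get the linear energy below~$1$, and $e^{-2\gamma}<2\delta$ for the nonlinear energy bound) is clean. The paper's Fourier-side construction, on the other hand, has the advantage that it carries over verbatim to the companion counterexample for NLS (Proposition~8.2 in the paper), where your finite-propagation argument would not apply because the Schr\"odinger equation has infinite speed of propagation.
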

To apply the Strichartz estimate to the nonlinear term, we have to put it in some $L^p H^{\s,q}$ satisfying at least
\EQ{
 \frac{1}{p}+\frac{1}{q}\ge \frac{3}{2},\quad \s\ge \frac{1}{p} + \frac{2}{q} -2,}
which is embedded into $L^p L^\r$ with $1/p+2/\r=2$.
Thus the above example implies that in the critical case we cannot control the nonlinearity just by the Strichartz estimate even for the free solutions, which is usually the first step to construct solutions by the iteration argument.
However, it does not imply any sort of weak ill-posedness (such as singularity of the solution map) in the critical case, because we can not make the nonlinear energy critical.
\begin{proof}
Let $a>1/2$ and define $v_N$ by the initial data using the Fourier transform
\EQ{
 \dot v_N(0)=0,\quad v_N(0) = \sqrt{\frac{2\pi}{\log N}} \frac{1}{(2\pi)^2} \int_{1<|\x|<e^{-a}N} |\x|^{-2}e^{i\x x}d\x.}
By Plancherel
\EQ{ \label{H1 L2 bd}
 \pt \|\na v_N(0)\|_{L^2}^2 = \frac{2\pi}{\log N} \frac{1}{(2\pi)^2}\int_{1<|\x|<e^{-a}N}|\x|^{-2} d\x
  = \frac{\log N-a}{\log N},
 \pr \|v_N(0)\|_{L^2}^2 = \frac{2\pi}{\log N} \frac{1}{(2\pi)^2}\int_{1<|\x|<e^{-a}N}|\x|^{-4} d\x
  < \frac{1}{2\log N}.}
By the sharp Trudinger-Moser inequality \eqref{Trudinger-Moser}, there exists $M>0$ such that for any $\mu>0$
\EQ{
 \sup_{\|\na \psi\|_{L^2}^2 + \mu\|\psi\|_{L^2}^2 \le 1} \int F(\psi) dx \le M/\mu,}
where $\mu$ can be removed or inserted by rescaling. Then \eqref{H1 L2 bd} implies that
\EQ{
 \pt \int 2F(v_N(0)) dx \le \frac{M}{a},
 \pq E_0(v_N) < 1, \pq E(v_N,0) < 1 + \frac{M}{a},}
so that we get the desired nonlinear energy bound on $v_N$ by choosing $a\ge M/\de$.

Next, the free solution is given by Fourier transform
\EQ{
 v_N(t,x) = \sqrt{\frac{2\pi}{\log N}}\frac{1}{(2\pi)^2} \int_{1<|\x|<e^{-a}N} |\x|^{-2}\cos(-t\LR{\x}+\x x) d\x,}
and hence in the region where $t\sim\e N^{-1}$ and $|x|\sim\e N^{-1}$ for some $0<\e\ll 1$,
\EQ{
 \sqrt{\frac{\log N}{2\pi}} (2\pi)^2 v_N(t,x) \ge   \sum_{k=1}^{K}\int_{e^{k-1}<|\x|<e^k}|\x|^{-2}\cos(-t\LR{\x}+\x x) d\x,}
where $K$ is the maximal integer satisfying
\EQ{
  K \le \log N-a.}
The cosine is bounded in the region by
\EQ{
 \cos(-t\LR{\x}+\x x) \ge 1 - \frac{\e^2}{2} e^{2(k-K)},}
so the above integral is bounded from below
\EQ{
 \ge 2\pi\left[\log N - a - \frac{\e^2}{2} \sum_{k=1}^K e^{2(k-K)} \right],}
and the negative part is bounded by
\EQ{
 -a-2\pi \frac{\e^2}{2}\frac{1}{1-e^{-2}} \ge -2\pi(a+\e^2).}
Thus we obtain
\EQ{
 \pt v_N(t,x) \ge \sqrt{\frac{\log N}{2\pi}} - \frac{a+\e^2}{\sqrt{2\pi\log N}},
 \pr e^{4\pi|v_N|^2}|v_N|^2 \gec N^2\log N,}
when $(a+\e^2)^2<\log N$.

Thus we conclude
\EQ{
 \pt \inf_{t\sim\e N^{-1}} \|e^{4\pi|v|^2}v\|_{L^q_x(|x|\sim\e N^{-1})} \gec N^2\sqrt{\log N} (\e N^{-1})^{2/q},
 \pr \|e^{4\pi|v|^2}v\|_{L^p_t L^q_x( t \sim \e N^{-1}, |x|\sim\e N^{-1})} \gec \e^{1/p+2/q}N^{2-2/q-1/p}\sqrt{\log N}
 \sim \e^2 \sqrt{\log N}.}
\end{proof}


\section{The case of {\sf NLS}}

For the sake of clarity, we recall the 2D nonlinear Schr\"odinger
equation ({\sf NLS}):
\begin{equation}
\nonumber
\left\{
 \begin{aligned}
  &i\,\dot u+\Delta u = \left(e^{4\pi |u|^2}-1-4\pi |u|^2\right)u=f(u),\quad
   u:\R^{1+2}\to\C, \\
  &u(0,x) = u_0 (x)\in H^1(\R^2),
 \end{aligned}
\right.
\end{equation}
and the conserved quantities
\begin{equation}
\nonumber
M(u,t)=\int_{\R^2}|u|^2  dx,
\end{equation}
\begin{equation}
\nonumber H(u,t)=\int_{\R^2}\,\Big( |\na u|^2 + 2F(u)\Big)\, dx,
\end{equation}
where $$
 F(u) = \frac{1}{8\pi}\left(e^{4\pi |u|^2}-1 - 4\pi
 |u|^2-8\pi^2|u|^4\right).
 $$
Also we define
\EQ{
 E(u,t) := H(u,t) + M(u,t).}
For a time slab $I\subset\R$, we define $S^1(I)$ via
$$
\|u\|_{S^1(I)} = \|u\|_{L^\I(I,H^1_x)} +  \|u\|_{L^4(I,H^{1,4}_x)}.
$$
By the Strichartz estimates we have
\EQ{ \label{SStr}
  \|u\|_{S^1}\lesssim \|u(0)\|_{H^1}+\|\langle\nabla\rangle (i\,\dot u
   + \Delta u)\|_{L^{\frac{2}{1+2\eta}}(L^{\frac{1}{1-\eta}}_x)},}
for any $0<\eta\leq 1/2$.

The scattering result Theorem \ref{Main-NLS} is easily proved by the following two lemmas:
First we have the Strichartz-type estimate on the nonlinearity
\begin{lem}
\label{ST-NE} For any $H\in(0,1)$, there exists $\de\in(0,1)$, such that for
any time slab $I$, any $T\in I$ and any $H^1$ solution $u$ of \eqref{NLS} with $H(u)\le H$, we have
$$
\|u\|_{S^1(I)} \lec
  \|u(T)\|_{H^1}+\|u\|_{L^4(I,L^8)}^{4\delta}\|u\|_{S^1(I)}^{5-4\delta},
$$
\end{lem}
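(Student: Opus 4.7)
Start by applying the Strichartz inequality \eqref{SStr} with a small parameter $\eta \in (0, 1/2)$ depending on $H$: this reduces the lemma to the nonlinear estimate
\[
  \|\langle\nabla\rangle f(u)\|_{L^{2/(1+2\eta)}(I; L^{1/(1-\eta)}_x)} \lesssim \|u\|_{L^4(I,L^8)}^{4\delta}\|u\|_{S^1(I)}^{5-4\delta}.
\]
The Taylor expansion of the exponential gives the pointwise bounds $|f(u)| \lesssim |u|^{5}(e^{4\pi|u|^2}-1) + |u|^5$ and $|f'(u)| \lesssim |u|^{4}(e^{4\pi|u|^2}-1) + |u|^4$, so after Leibniz (and the fractional chain rule for the fractional part of $\langle\nabla\rangle$) the integrand is essentially controlled by $|u|^4(e^{4\pi|u|^2}-1)|\langle\nabla\rangle u|$, which is what I would aim to estimate.

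\textbf{Step 1 (spatial H\"older).} Split the $L^{1/(1-\eta)}_x$ norm as
\[
\|\,|u|^4(e^{4\pi|u|^2}-1)\langle\nabla\rangle u\,\|_{L^{1/(1-\eta)}_x} \leq \|e^{4\pi|u|^2}-1\|_{L^{q_1}_x}\|u\|_{L^{q_2}_x}^{4} \|\langle\nabla\rangle u\|_{L^{4}_x},
\]
with $1/q_1 + 4/q_2 + 1/4 = 1-\eta$. The factor $\|\langle\nabla\rangle u\|_{L^4_x}$ is the $H^{1,4}$ norm sitting inside $S^1$, while $\|u\|_{L^{q_2}_x}$ is controlled via the Sobolev embedding $H^{1,4} \subset L^{q_2}$ for any $q_2<\infty$, so both $q_1$ and $q_2$ can be taken as large as needed.

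\textbf{Step 2 (bounding the exponential).} Since $H(u) \leq H < 1$, we have $\|\nabla u\|_{L^2}^2 \leq H$ along the flow. Choose $\lambda > 1$ with $\lambda H < 1$. To apply the sharp Trudinger--Moser inequality \eqref{Trudinger-Moser}, I would decompose $u$ using the cubic partition of Section 3, so that each piece satisfies $\|\nabla u^{j,k}\|_{L^2}^2 + \frac{1}{2}\|u^{j,k}\|_{L^2}^2 \leq \|u\|_{H^1[6]}^2 $, and scale the mass by the parameter $\mu$ in the $H_\mu$-norm of Lemma \ref{Hmu} to make $\lambda\|u^{j,k}\|_{H_\mu}^2 < 1$ uniformly. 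Then Trudinger--Moser yields a uniform bound $\|e^{4\pi|u|^2}-1\|_{L^{q_1}_x} \lesssim_{H,M(u)} 1$, and summing the cubic pieces using \eqref{L2 decop} recovers the global estimate.

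\textbf{Step 3 (time integration and extraction of $L^4L^8$).} With the exponential factor now bounded uniformly in $x$ and $t$, the nonlinear estimate collapses to
\[
  \bigl\|\,\|u\|_{L^{q_2}_x}^{4}\|\langle\nabla\rangle u\|_{L^{4}_x}\,\bigr\|_{L^{2/(1+2\eta)}_t(I)}.
\]
Using H\"older in $t$ and Sobolev interpolation $L^{q_2} \subset [L^8, H^{1,4}]$, I would distribute four $u$-factors between a small fractional power of the $L^4(I,L^8)$ norm (yielding the $4\delta$ exponent) and the rest of the $S^1$-norm, while $\|\langle\nabla\rangle u\|_{L^4_x}$ is integrated in $t$ against the $L^4(I;H^{1,4})$ component of $S^1$. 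The bookkeeping of $\eta$, $q_1$, $q_2$, $\delta$ can be arranged so that the time exponent matches $2/(1+2\eta)$ and the total power of $u$-type factors is $5$, giving $\|u\|_{L^4L^8}^{4\delta}\|u\|_{S^1}^{5-4\delta}$.

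\textbf{Main obstacle.} The crucial step is Step 2: the Trudinger--Moser bound requires strict inequality $\lambda H < 1$, and one must thread the cubic decomposition through the nonlocal factor $\langle\nabla\rangle$. Because $\langle\nabla\rangle$ does not commute with the local cut-offs $\chi(x-2\sqrt{2}(j,k))$, the derivative has to be kept on $u$ throughout (as in the proof of Lemma \ref{crit-nlst}), and the $\ell^2$-summability of the cubic pieces must be re-verified in the resulting mixed Lebesgue norms. Choosing $\delta > 0$ positive depends on having slack in both the Hölder exponents and in the Trudinger--Moser threshold, which is exactly what the subcritical assumption $H < 1$ provides.
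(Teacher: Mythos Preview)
There is a genuine gap: your H\"older split in Step~1 cannot coexist with the Trudinger--Moser constraint of Step~2 once $H\ge 3/4$. You place $\langle\nabla\rangle u$ in $L^4_x$; more generally, since $S^1 = L^\infty_t H^1_x \cap L^4_t H^{1,4}_x$, the factor $\langle\nabla\rangle u$ can only be put in $L^p_x$ for $p\in[2,4]$, so it always consumes at least $1/4$ of the spatial H\"older budget $1-\eta$. On the other hand, Trudinger--Moser (after any $H_\mu$ rescaling or cubic localisation) bounds $\|e^{4\pi|u|^2}-1\|_{L^{q_1}_x}$ only when $q_1\|\nabla u\|_{L^2}^2<1$, forcing $1/q_1>H$. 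Thus the exponential and the gradient together already occupy more than $H+1/4$, leaving the four $\|u\|_{L^{q_2}}$ factors at most $3/4-\eta-H$, which is negative for every $H\ge 3/4$. Your assertion that ``both $q_1$ and $q_2$ can be taken as large as needed'' is precisely what fails, and the cubic decomposition does not rescue it: it never pushes the gradient $L^2$ norm of a piece below the global value $\|\nabla u\|_{L^2}^2\le H$. (The time H\"older in Step~3 breaks for the same reason: with the exponential reduced to a constant you are left with five factors each sitting in $L^4_t$, totalling time exponent $5/4$ rather than $(1+2\eta)/2$.)

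The paper avoids this by \emph{not} bounding the full exponential by a constant. Using the pointwise bound $|\nabla f(u)|\lec|\nabla u|\,|u|^2(e^{4\pi|u|^2}-1)$, it writes
\[
 \|e^{4\pi|u|^2}-1\|_{L^1_x}^{1/2-\de}\,\|e^{4\pi|u|^2}-1\|_{L^\infty_x}^{1/2+\de},
\]
applies Trudinger--Moser only to the $L^1$ factor (spatial cost $1/2-\de$, independent of $H$), and handles the $L^\infty$ factor through the logarithmic inequality \eqref{H-mu} with $\mu=\min\bigl(1,\sqrt{(1-H)/M}\bigr)$. After choosing $\de,\la$ so that $2\pi(1+2\de)\la K^2=2$ with $K=(H+1)/2$, this yields $\|e^{4\pi|u|^2}-1\|_{L^\infty}^{1/2+\de}\lec\|u\|_{C^{1/2-\de/2}}^2$. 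The $L^\infty$ piece, which costs zero in space, is thus converted into two \emph{additional} polynomial factors of $u$, each controlled in $L^{4/(1-\de)}_t(C^{1/2-\de/2}_x)$ by $\|u\|_{S^1}$. These two extra factors are exactly what balances both the spatial and the temporal H\"older for every $H<1$, and they are missing from your scheme. No cubic decomposition is used (or needed) in the paper's proof of this lemma.
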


Next we have a global {\it a priori} bound.
It was proved independently by Planchon-Vega \cite{PV} and Colliander et al. \cite{CGT}
\begin{lem}
\label{GB} Let $u$ be a global solution of \eqref{NLS} in $H^1$. Then
\EQ{ \nonumber
 \|u\|_{L^4(\R,L^8)} \lec \|u\|_{L^\I(\R;L^2)}^{3/4} \|\na u\|_{L^\I(\R;L^2)}^{1/4} \lec M(u)^{3/8}H(u)^{1/8}.}
\end{lem}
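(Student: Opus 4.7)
The plan is to derive the lemma from the two-dimensional interaction Morawetz estimate of Planchon--Vega \cite{PV} and Colliander--Grillakis--Tzirakis \cite{CGT}, combined with a Sobolev embedding and the conservation laws. Concretely, I would first invoke the interaction Morawetz inequality in the form
\[
\||u|^2\|_{L^2_t(\R;\dot H^{1/2}_x(\R^2))}^2 \lesssim \|u\|_{L^\infty_t L^2_x}^3 \, \|\na u\|_{L^\infty_t L^2_x},
\]
which is the $d=2$ output of differentiating in $t$ the ``interaction virial''
\[
M(t) := \iint_{\R^2\times\R^2} \frac{x-y}{|x-y|}\cdot \Im(\bar u \na u)(t,x)\,|u(t,y)|^2 \, dx\, dy,
\]
and using that the nonlinear contribution has a favorable sign. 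For our defocusing exponential $f$, that sign condition reduces to $2F(u)\le \Re(\bar u\,f(u))$, which is immediate from the power-series expansion $e^{4\pi|u|^2}-1-4\pi|u|^2-8\pi^2|u|^4=\sum_{k\ge 3}(4\pi|u|^2)^k/k!$ (all coefficients nonnegative, giving termwise $2F(u)\le\Re(\bar u f(u))$). Thus the Planchon--Vega / CGT computation applies verbatim.

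Next I would apply the two-dimensional Sobolev embedding $\dot H^{1/2}(\R^2)\hookrightarrow L^4(\R^2)$, yielding
\[
\|u\|_{L^4_t L^8_x}^4 \;=\; \||u|^2\|_{L^2_t L^4_x}^2 \;\lesssim\; \||u|^2\|_{L^2_t \dot H^{1/2}_x}^2.
\]
Combined with the previous display and taking a fourth root gives the first inequality of the lemma,
\[
\|u\|_{L^4(\R;L^8)} \lesssim \|u\|_{L^\infty_t L^2_x}^{3/4}\,\|\na u\|_{L^\infty_t L^2_x}^{1/4}.
\]

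For the second inequality, I would use the conservation laws. Mass conservation \eqref{mass} gives $\|u(t)\|_{L^2}^2\equiv M(u)$, while pointwise positivity $F(u)\ge 0$ (manifest from the same Taylor series) combined with Hamiltonian conservation \eqref{hamil} gives $\|\na u(t)\|_{L^2}^2\le H(u)$ for all $t$. Substituting these bounds into the previous inequality produces $\|u\|_{L^4(\R;L^8)} \lesssim M(u)^{3/8}\,H(u)^{1/8}$.

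The only nontrivial step is the interaction Morawetz estimate itself; Sobolev embedding and the conservation laws are immediate. The main obstacle would be to verify that the virial identity of \cite{CGT,PV} goes through for the exponential nonlinearity, but as noted this is transparent once one writes $f$ as a sum of defocusing monomials with nonnegative coefficients, so no new work beyond invoking the cited estimate is actually required.
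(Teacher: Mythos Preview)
Your proposal is correct and matches the paper's approach exactly: the paper does not give a self-contained proof but simply cites \cite{PV,CGT} for the a priori bound on a Sobolev norm of $|u|^2$ and remarks that the stated estimate follows via Sobolev embedding. Your write-up spells out precisely this (the $\dot H^{1/2}\hookrightarrow L^4$ embedding in $\R^2$ applied to $|u|^2$, plus the defocusing sign check and the conservation laws), so there is nothing to add.
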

Actually both of them gave a priori bound on some Sobolev norm on $|u|^2$. The above is a consequence of it via the Sobolev embedding.

By the above global bound, we can decompose $\R$ into a finite number of intervals on which the $\|u\|_{L^4 L^8}$ norm is sufficiently small. Then the first lemma gives a uniform bound on $\|u\|_{S^1}$ on each interval, and hence by summing it up for all intervals, we obtain a priori bound
\EQ{
 \|u\|_{S^1(\R\times\R^2)} \le C(E(u)) < \I,}
and thereby the scattering for $u$.

\begin{proof}[{\bf Proof of Lemma 8.1}]
It suffices to estimate the nonlinear term in some dual Strichartz
norm as in \eqref{SStr}. Choose $0<\de<1$ and $\lambda>0$ such that
\EQ{ \label{def eta ka}
  K:=\frac{H+1}{2}<1,\pq 2\pi(1+2\de)\lambda K^2=2, \quad
   \lambda>\frac{1}{\pi(1-\de)}.}
We estimate only $\na f(u)$, since the same estimate on $f(u)$ is easier.
Note that
$$ |\nabla f(u)| \lec |\nabla u| |u|^2(e^{4\pi|u|^2}-1).
$$

In the case $\|u\|_{L^\infty_x}\geq K$, we have by the H\"older inequality,
\EQ{
 \|\na f(u)\|_{L^{\frac{1}{1-\de}}_x}
  &\lec \|\na u\|_{L^{\frac{2}{1-\de}}}\|u\|_{L^{\frac{4}{\de}}}^2
    \|e^{4\pi|u|^2}-1\|_{L^1}^{1/2-\de}
    \|e^{4\pi|u|^2}-1\|_{L^\infty}^{1/2+\de}.}
The third term on the right is bounded by the Trudinger-Moser \eqref{Trudinger-Moser}.
For the last term we use the $H_{\mu}$ version of the logarithmic
inequality \eqref{H-mu} with $\mu:=\min(1,\sqrt{(1-H)/M})>0$. Since
$$
 \|u\|_{H_\mu}^2 = \|\nabla u\|_{L^2}^2+\mu^2 \|u\|_{L^2}^2\leq H+\mu^2 M \le
 \frac{H+1}{2}<1,
$$
that term is bounded by
\EQ{
 e^{4\pi(1/2+\de)\|u\|_{L^\I}^2}
  \lec (1 + \|u\|_{C^{1/2-\de/2}}/\|u\|_{H_\mu})^{2\pi(1+2\de)\|u\|_{H_\mu}^2}
  \lec \|u\|_{C^{1/2-\de/2}}^2,}
where we used \eqref{def eta ka} as well as $\|u\|_{C^{1/2-\de/2}}\ge\|u\|_{L^\I}\ge K$.
The case $\|u\|_{L^\I}\le K$ is easy, since then $|\na f(u)|\lec |\na u||u|^4$.

Now we integrate in time using the H\"older to obtain
$$
\|\na f(u)\|_{L^{\frac{2}{1+2\de}}(L^{\frac{1}{1-\de}})}
\lesssim \|\nabla u\|_{L^{\frac{2}{\de}}(L^{\frac{2}{1-\de}})}
\|u\|_{L^{\frac{2}{\de}}(L^{\frac{4}{\de}})}^2
\|u\|_{L^{\frac{4}{1-\de}}(C^{1/2-\de/2})}^2.
$$

Finally, the complex interpolation
and the Sobolev embedding imply that
\EQ{
 &\|\nabla u\|_{L^{\frac{2}{\de}}(L^{\frac{2}{1-\de}})} \lec \|\na u\|_{L^\I L^2}^{1-2\de} \|\na u\|_{L^4 L^4}^{2\de} \lec \|u\|_{S^1},\\
 &\|u\|_{L^{\frac{2}{\de}}(L^{\frac{4}{\de}})}
  \lec \|u\|_{L^\I L^2}^{1-2\de} \|u\|_{L^4 L^8}^{2\de},\\
 &\|u\|_{L^{\frac{4}{1-\de}}(C^{1/2-\de/2})}
  \lec \|u\|_{L^{\frac{4}{1-\de}}(H^{1,\frac{4}{1+\de}})}
  \lec \|u\|_{L^\I H^1}^\de \|u\|_{L^4 H^{1,4}}^{1-\de} \lec \|u\|_{S^1}.}
Plugging them into the above, we deduce the result as desired.
\end{proof}

Finally, we observe that the same example as in Proposition \ref{prop:ctex NLKG} implies that the linear energy and the Strichartz estimate are not sufficient to control the nonlinearity in the critical case.

\begin{prop}
For any $\de>0$, there exists a sequence of radial free Schr\"odinger solutions $v_N$ ($N\to\I$) such that
\EQ{
 \pt \int_{\R^2} |\na v_N|^2 + |v_N|^2 dx <1, \pq H(v_N,0) \le 1 +\de, \pq
 \pr \|\na f(v_N)\|_{L^p_t L^q_x (|t|\ll N^{-2},|x|\ll N^{-1})} \ge C_\de (\log N)^{1/2},}
for any $(p,q)\in[1,\I]$ satisfying $1/p+1/q=3/2$.
\end{prop}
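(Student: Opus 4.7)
The plan is to mimic the NLKG construction in Proposition \ref{prop:ctex NLKG} with the Schr\"odinger phase replacing the Klein-Gordon phase, and then to extract the additional $\na$ factor via a careful Taylor expansion in the spatial variable. First I would define $v_N$ as the free Schr\"odinger solution with radial initial data
\[
 v_N(0,x) = \sqrt{\frac{2\pi}{\log N}}\frac{1}{(2\pi)^2}\int_{1<|\xi|<e^{-a}N} |\xi|^{-2}\,e^{i\xi\cdot x}\,d\xi,
\]
for some large parameter $a>0$ depending on $\de$. By Plancherel, $\|\na v_N(0)\|_{L^2}^2 = 1-a/\log N$ and $\|v_N(0)\|_{L^2}^2 = O(1/\log N)$, and the Trudinger-Moser inequality \eqref{Trudinger-Moser} after rescaling yields $\int 2F(v_N(0))\,dx \le M/a$, so $H(v_N,0)\le 1+\de$ upon choosing $a\ge M/\de$.

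Next I would analyze the free evolution
\[
 v_N(t,x) = \sqrt{\frac{2\pi}{\log N}}\frac{1}{(2\pi)^2}\int_{1<|\xi|<e^{-a}N} |\xi|^{-2}\,e^{-it|\xi|^2+i\xi\cdot x}\,d\xi.
\]
In the spacetime region $|t|\le \e N^{-2}$, $|x|\le \e N^{-1}$ for small fixed $\e>0$, the Schr\"odinger phase $-t|\xi|^2+\xi\cdot x$ is uniformly small on each dyadic shell $\{e^{k-1}<|\xi|<e^k\}$ with $k\le K:=\lfloor \log N-a\rfloor$, just as in the NLKG case---only the scaling in $t$ differs, $N^{-1}$ being replaced by $N^{-2}$ due to the quadratic symbol. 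Summing the dyadic lower bounds would give $\mathrm{Re}\,v_N(t,x) \ge \sqrt{\log N/(2\pi)} - C(a+\e^2)/\sqrt{\log N}$, and hence $e^{4\pi|v_N|^2}|v_N|^2 \gtrsim N^2\log N$ throughout this region.

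The main new difficulty is a matching pointwise lower bound on $|\na v_N|$, which vanishes at $x=0$ by radial symmetry. To extract it I would Taylor expand $e^{i\xi\cdot x}$ to first order in $x$ inside the oscillatory integral for $\na v_N$. The zeroth-order term $\int_{1<|\xi|<R}(\xi/|\xi|^2)\,d\xi$ vanishes by oddness, but the $O(\xi\cdot x)$ correction gives
\[
 \na v_N(t,x) \approx -\sqrt{\frac{2\pi}{\log N}}\frac{1}{(2\pi)^2}\int_{1<|\xi|<R}\frac{\xi\,(\xi\cdot x)}{|\xi|^2}\,d\xi = -\frac{R^2-1}{8\pi}\sqrt{\frac{2\pi}{\log N}}\,x,
\]
with $R=e^{-a}N$, using the identity $\int_{S^1}\omega\otimes\omega\,d\omega = \pi I$. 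Therefore on the annular region $|x|\sim \e N^{-1}$ we have $|\na v_N(t,x)|\gtrsim \e N/\sqrt{\log N}$, once one verifies that the $O((\xi\cdot x)^2)$, $O(t|\xi|^2)$, and mixed corrections are $O(\e^2)$ in the chosen window.

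Combining the two lower bounds with the pointwise inequality $|\na f(u)|\gtrsim e^{4\pi|u|^2}|u|^2|\na u|$ valid for large $|u|$, I would obtain $|\na f(v_N)|\gtrsim \e N^3(\log N)^{1/2}$ on a spacetime region of volume $\sim \e^3 N^{-4}$, and $L^p_tL^q_x$ integration over it gives
\[
 \|\na f(v_N)\|_{L^p_tL^q_x} \gtrsim \e^{1+2/q+1/p}\,N^{3-2/q-2/p}\,(\log N)^{1/2} \gtrsim_\e (\log N)^{1/2},
\]
the $N$-exponent vanishing precisely because $(p,q)$ lies on the Schr\"odinger Strichartz line $1/p+1/q=3/2$. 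The main obstacle is this gradient lower bound: the leading stationary contribution to $\na v_N$ cancels by symmetry, and we must rely on the subleading linear-in-$x$ contribution, which is what forces the narrow window $|x|\ll N^{-1}$ (and, through the quadratic symbol, the window $|t|\ll N^{-2}$) in the statement.
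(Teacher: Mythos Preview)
Your proposal is correct and follows essentially the same approach as the paper: identical initial data, the same lower bound on $\Re v_N$ in the parabolic window $|t|\ll N^{-2}$, $|x|\ll N^{-1}$, and the same scaling $|\na v_N|\sim \e N/\sqrt{\log N}$ on $|x|\sim\e N^{-1}$. The only cosmetic difference is in the gradient step: the paper writes $\p_r v_N$ in polar coordinates and evaluates the angular integral $\int\cos\th\,\sin(r\rho\cos\th)\,d\th\sim r\rho$ directly, whereas you reach the same linear-in-$x$ leading term via a Taylor expansion of $e^{i\xi\cdot x}$ and the identity $\int_{S^1}\omega\otimes\omega\,d\th=\pi I$; these are two presentations of the same computation.
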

The above norm on $f(v_N)$ is the dual Strichartz norm in $H^1_x$ for the linear Schr\"odinger equation.
\begin{proof}
We take the same initial data as in Proposition \ref{prop:ctex NLKG}:
\EQ{
 v_N(0) = \sqrt{\frac{2\pi}{\log N}} \frac{1}{(2\pi)^2} \int_{1<|\x|<e^{-a}N} |\x|^{-2}e^{i\x x}d\x.}
Then the proof for the previous Proposition gives the desired bounds on the initial data.
Also, it implies that for the free solution
\EQ{
 v_N(t,x) = \sqrt{\frac{2\pi}{\log N}}\frac{1}{(2\pi)^2} \int_{1<|\x|<e^{-a}N} |\x|^{-2}e^{-it|\x|^2+i\x x} d\x,}
we have in the region where $t\sim\e^2N^{-2}$ and $|x|\sim\e N^{-1}$ for some small fixed $\e>0$ and large $N\in\N$,
\EQ{
 \pt \Re v_N(t,x) \ge \sqrt{\frac{\log N}{2\pi}} - \frac{a+\e^2}{\sqrt{2\pi\log N}},
 \pq e^{4\pi|v|^2}|v|^2 \gec N^2\log N.}
In this case we have to estimate $\na v$ also.
By the radial symmetry, it suffices to consider the case $x=(x_1,0)$ and $\na v_N=(\p_1 v_N, 0)$. Then
\EQ{
 \p_1 v_N = \p_r v_N \pt\sim \frac{1}{\sqrt{\log N}}\int_{1<|\x|<e^{-a}N} \frac{\x_1}{|\x|^2}e^{-it|\x|^2+i\x x} d\x
 \pr= \frac{i}{\sqrt{\log N}}\int_1^N e^{-it\rh^2} \int_{-\pi}^{\pi} \cos\th \sin(r\rh\cos\th) d\th d\rh,}
since $0<t\rh^2<\e^2\ll 1$ and $0<r\rh\cos\th<\e\ll 1$, we get
\EQ{
 |\p_r v_N| \sim \frac{1}{\sqrt{\log N}} \int_1^N r\rh d\rh \sim \frac{r N^2}{\sqrt{\log N}} \sim \frac{\e N}{\sqrt{\log N}}.}
Thus we conclude
\EQ{
 \pt \inf_{t\sim\e^2 N^{-2}} \|e^{4\pi|v_N|^2}|v_N|^2\na v_N\|_{L^q_x(|x|\sim\e N^{-1})} \gec N^2\log N \frac{\e N}{\sqrt{\log N}} (\e N^{-1})^{2/q},
 \pr \|\na f(v_N)\|_{L^p_t L^q_x( t \sim \e^2 N^{-2}, |x|\sim\e N^{-1})}
  \gec \e^{2/p+2/q} N^{3-2/q-2/p} \sqrt{\log N} \sim \e^3 \sqrt{\log N}.}
\end{proof}



\begin{thebibliography}{10}
\bibitem{AT}
S.~Adachi and K.~Tanaka, {\em Trudinger type inequalities in $\mathbb R^N$ and their best exponents}, Proc.~Amer.~Math.~Soc. {\bf 128} (2000), no.~7, 2051--2057.

\bibitem{A}
A.~Atallah, {\em Local existence and
estimations for a semilinear wave equation in two dimension
space.} Boll. Unione Mat. Ital. Sez. B Artic. Ric. Mat. {\bf 8} (2004), 1, 1--21.

\bibitem{BG}
H.~Bahouri and P. G\'erard, {\em High frequency approximation of solutions to critical nonlinear wave equations}, Amer. J. Math. {\bf 121} (1999), 131--175.

\bibitem{BS}
{H.~Bahouri and J. Shatah}, {\em Decay estimates for the critical semilinear wave equation}, Ann. Inst. H. Poincar\'e
    Anal. Non-lin\'eaire, {\bf 15} (1998), 783--789.

\bibitem{Bourgain}{J. Bourgain}, {\em Scattering in the
energy space and below for 3D NLS}, J. Anal. Math. {\bf 75} (1998) 267--297.

\bibitem{Bourgain1}{J. Bourgain}, {\em Global wellposedness
of defocusing critical nonlinear Schr\"odinger equation in the radial case},
J. Amer. Math. Soc. {\bf 12} (1999), no.~1, 145--171.

\bibitem{Brenner}{P.~Brenner},
{\em On scattering and everywhere defined scattering operators for nonlinear Klein-Gordon equations}, J. Differential Equations {\bf 56} (1985), 310--344.

\bibitem{Caz}
T.~Cazenave, {\em Equations de Schr\"odinger non lin\'eaires en dimension deux.} Proc.~Roy.~Soc.~Edinburgh Sect. A {\bf 84} (1979), no.~3-4, 327--346.

\bibitem{CazWeis}{T. Cazenave and F.B. Weissler},
{\em Critical nonlinear Schr\"odinger Equation}, Non. Anal. TMA, {\bf 14} (1990), 807--836.

\bibitem{CCT}{M.~Christ, J. Colliander and T. Tao},
{\em Ill-posedness for nonlinear Schr\"odinger and wave equations}, preprint, arXiv:math/0311048v1 [math.AP].

\bibitem{CCT1}{M.~Christ, J. Colliander and T. Tao},
{\em Asymptotics, frequency modulation, and low regularity
 ill-posedness for canonical defocusing equations}, Amer. J. Math. {\bf 125} (2003), 1235--1293.

\bibitem{CGT}{J. Colliander, M. Grillakis and N. Tzirakis},
{\em Tensor products and correlation estimates with applications to nonlinear Schr\"odinger equations}, preprint.

\bibitem{CKSTT}{J. Colliander, M. Keel, G. Staffilani, H. Takaoka and T. Tao},
{\em Global well-posedness and scattering in the energy space for the critical nonlinear Schr\"odinger equation
in $\R^3$}, Annals of Math., to appear.

\bibitem{CIMM}{J. Colliander, S. Ibrahim, M. Majdoub and N. Masmoudi},
{\em Energy critical NLS in two space dimension}, preprint.

\bibitem{GSV}{J.~Ginibre, A.~Soffer, and G.~Velo},
{\em The global Cauchy problem for the critical nonlinear wave equation},
J. Funct. Anal. {\bf 110} (1992), 96--130.

\bibitem{GV1}{J. Ginibre and G. Velo},
{\em The global Cauchy problem for nonlinear Klein-Gordon equation},
Math. Z. {\bf 189} (1985), 487--505.

\bibitem{GV2}{J. Ginibre and G. Velo},
{\em Time decay of finite energy solutions of the non linear
Klein-Gordon and Schr\"odinger equations}, Ann. Inst. Henri.
Poincar\'e, {\bf 43} (1985), 399--442.

\bibitem{GV4} J.~Ginibre and G.~Velo,
{\it Scattering theory in the energy space for a class of nonlinear Schr\"odinger equations},
J. Math. Pures Appl. (9) {\bf 64} (1985), no.~4, 363--401.

\bibitem{GV3}
{J.~Ginibre, G.~Velo, G},
{\em Scattering theory in the energy space for a class of nonlinear wave equations}, Comm. Math. Phys. {\bf 123} (1989), 535--573.

\bibitem{Gr1} {M.~Grillakis},  {\em Regularity and asymptotic behaviour
of the wave equation with a critical nonlinearity},  Annals of Math.
{\bf 132} (1990), 485--509.

\bibitem{Gr2} {M.~Grillakis}, {\em Regularity for the wave equation with a critical
nonlinearity},  Comm. Pure Appl. Math. {\bf 46} (1992), 749--774.

\bibitem{2Dglobal}
S.~Ibrahim, M.~Majdoub and N.~Masmoudi,
{\em Global solutions for a semilinear, two-dimensional Klein-Gordon equation with exponential-type nonlinearity},
Comm.~Pure Appl.~Math. {\bf 59} (2006), no.~11, 1639--1658.

\bibitem{DlogSob}
S.~Ibrahim, M.~Majdoub and N.~Masmoudi,
{\em Double logarithmic inequality with a sharp constant}, Proc. Amer. Math. Soc. {\bf 135} (2007), no. 1, 87--97.

\bibitem{2Dinst}
S.~Ibrahim, M.~Majdoub and N.~Masmoudi,
{\it Ill-posedness of supercritical waves}, preprint.

\bibitem{IMM5}
S.~Ibrahim, M.~Majdoub and N.~Masmoudi,
{\it On the well-posedness of some 2D Klein-Gordon equations with exponential type nonlinearities}, preprint.

\bibitem{KOT}
H.~Kozono, T.~Ogawa and Y.~Taniuchi,
{\em The critical Sobolev inequalities in Besov spaces and regularity criterion to some semi-linear evolution equations},
Math. Z. {\bf 242} (2002), no. 2, 251--278.

\bibitem{LLT}
J.~F.~Lam, B.~Lippman, and F.~Tappert, {\em Self trapped laser beams in plasma}, Phys. Fluid {\bf20} (1977), 1176-1179.

\bibitem{Leb1} {G.~Lebeau}, {\em Nonlinear optics and supercritical
wave equation}, Bull. Soc. R. Sci. Li\`ege {\bf 70} (2001), No.~4-6, 267--306.

\bibitem{Leb2} {G.~Lebeau}, {\em Perte de r\'egularut\'e
pour l'\'equation des ondes surcritique}, Bull. Soc. Math. France {\bf 133} (2005), 145--157.

\bibitem{MP}
{N.~Masmoudi and F.~Planchon},
{\em On uniqueness for the critical wave equation},
Comm. Partial Differential Equations, {\bf 31} (2006), 1099--1107.

\bibitem{NO3}{M. Nakamura and T. Ozawa}, {\em Nonlinear Schr\"odinger
equations in the Sobolev space of critical order },
J. Funct. Anal. {\bf 155} (1998), 364--380.


\bibitem{NO1}
M. ~Nakamura and T. ~Ozawa, {\it Global solutions
in the critical Sobolev space for the wave equations with
nonlinearity of exponential growth}, Math. Z. {\bf 231} (1999), 479--487.

\bibitem{NO2}
M. ~Nakamura and T. ~Ozawa, {\it The Cauchy
problem for nonlinear wave equations in the Sobolev space of
critical order}, Discrete and Continuous Dynamical Systems, {\bf 5} (1999), no.~1, 215--231.

\bibitem{2Dsubcrit}
K.~Nakanishi, {\em Energy
scattering for nonlinear Klein-Gordon
and Schr\"odinger equations in spatial
dimensions $1$ and $2$},  J.~Funct.~Anal. {\bf 169} (1999), no.~1, 201--225.

\bibitem{3Dcrit}
K.~Nakanishi, {\em Scattering theory for the nonlinear Klein-Gordon equation with Sobolev critical power}, Internat.~Math.~Res.~Notices (1999), no.~1, 31--60.

\bibitem{P}
{F.~Planchon}, {\em On uniqueness for semilinear wave equations},
Math. Z. {\bf 244} (2003), 587--599.

\bibitem{PV}
{F.~Planchon and L.~Vega},
{\em Bilinear virial identities and applications}, preprint, arXiv:0712.4076v1 [math.AP].

\bibitem{Ruf}
{B.~Ruf}, {\em A sharp Trudinger-Moser type inequality for
unbounded domains in $\mathbb R\sp 2$.} J.~Funct.~Anal. {\bf 219}
(2005), no.~2, 340--367.

\bibitem{SS}{J.~Shatah and M.~Struwe}, {\em Well-posedness in the energy space for semilinear wave equation with critical growth},
IMRN, {\bf 7} (1994),  303--309.

\bibitem{Str}
{W.~A.~Strauss},
{\em On weak solutions of semi-linear hyperbolic equations},
An. Acad. Brasil. Ci. {\bf 42} (1970), 645--651.

\bibitem{Strauss}{W.~Strauss}, {\em Nonlinear wave equations},
Conf. Board of the Math. Sciences, {\bf 73}, Amer. Math. Soc., 1989.

\bibitem{Stru}
{M.~Struwe}, {\em Semilinear wave equations}, Bull. ~Amer.
~Math. ~Soc., {\bf N.S},  $n^\circ$ 26, 53-85, 1992.

\bibitem{Stru2}
{M.~Struwe},
{\em Uniqueness for critical nonlinear wave equations and wave maps via the energy inequality},
Comm. Pure Appl. Math. {\bf 52} (1999), 1179--1188.

\end{thebibliography}
\end{document}